\numberwithin{equation}{section}
  \theoremstyle{plain}
 \newtheorem{theorem}[equation]{Theorem}
\newtheorem{proposition}[equation]{Proposition}
 \newtheorem{corollary}[equation]{Corollary}
 \theoremstyle{remark}
 \newtheorem{remark}[equation]{Remark}
\theoremstyle{definition}
\newtheorem{example}[equation]{Example}
\newcommand{\abs}[1]{\lvert#1\rvert}
\newcommand{\Rc}{{\rm Ric}}
\newcommand{\Ric}{{\rm Ric}}
\newcommand{\dR}{\mathds{R}}
\newcommand{\p}{\parallel}
\newcommand{\Rm}{\text{Rm}}
\newcommand{\eps}{\varepsilon}
\renewcommand{\H}{\mathcal{H}}
\newcommand{\tr}{\text{tr}}
\newcommand{\Lap}{\Delta}
\begin{document}

\title{Ricci Curvature and \\ Bochner Formulas for Martingales}
\author{Robert Haslhofer and Aaron Naber\thanks{R.H. has been partially supported by NSF grant DMS-1406394 and NSERC grant RGPIN-2016-04331. A.N. has been partially supported by NSF grant DMS-1406259. Both authors also acknowledge the invitation to MSRI Berkeley in spring 2016 supported by NSF Grant DMS-1440140, where part of this research has been carried out.}}
\date{\today}

\maketitle
\begin{abstract}

We generalize the classical Bochner formula for the heat flow on $M$ to martingales on the path space $PM$, and develop a formalism to compute evolution equations for martingales on path space.  We see that our Bochner formula on $PM$ is related to two sided bounds on Ricci curvature in much the same manner that the classical Bochner formula  on $M$ is related to lower bounds on Ricci curvature. Using this formalism, we obtain new characterizations of bounded Ricci curvature, new gradient estimates for martingales on path space, new Hessian estimates for martingales on path space, and streamlined proofs of the previous characterizations of bounded Ricci curvature from \cite{Naber_char}.
\end{abstract}

\small{\tableofcontents}


\section{Introduction}

The main goal of this paper is to explain how bounded Ricci curvature can be understood by analyzing the evolution of martingales on path space, generalizing the well known and important principles of how lower bounds on Ricci curvature can be understood by analyzing the heat flow.  The formalism we develop will allow us to do analysis on the path space $PM$ of manifolds with bounded Ricci curvature using techniques and ideas which mimic closely the ideas used to do analysis on manifolds with lower Ricci curvature bounds.\\

\subsection{Background on Lower and Bounded Ricci Curvature}


\noindent\textbf{Lower bounds for Ricci curvature.}
To put things into context, let us briefly mention the theory of  spaces with Ricci curvature bounded below, which has been a very active area of research in the last 30 years. This theory can be pursued either in the setting of smooth Riemannian manifolds and their Gromov-Hausdorff limits, see e.g. \cite{CheegerColding_almost,CheegerColding_structure,CheegerNaber_quant}, or in the more general setting of metric measure spaces, see e.g. \cite{LottVillani,Sturm,AGS,Gigli_splitting}.
The starting point for most of the analysis of such spaces with Ricci curvature bounded below, say by a constant $-\kappa$, is the classical Bochner inequality.  For solutions $H_tf$ of the heat flow this may be written as
\begin{align}\label{bochner_inequ}
\big(\partial_t - \tfrac{1}{2}\Lap\big) \abs{\nabla H_t f}^2 \leq - \abs{\nabla^2 H_t f}^2 + \kappa\abs{\nabla H_t f}^2\, .
\end{align}
The Bochner inequality \eqref{bochner_inequ} in particular implies the dimensional Bochner inequality
\begin{align}
\big(\partial_t - \tfrac{1}{2}\Lap\big) \abs{\nabla H_t f}^2  \leq - \tfrac{1}{n}\abs{\Delta H_t f}^2 + \kappa\abs{\nabla H_t f}^2\, ,
\end{align}
and the weak Bochner inequality
\begin{align}\label{weak_bochner}
\big(\partial_t - \tfrac{1}{2}\Lap\big) \abs{\nabla H_t f}^2  \leq  \kappa\abs{\nabla H_t f}^2\, ,
\end{align}
and conversely there is a self-improvement mechanism that allows one to go from \eqref{weak_bochner} to \eqref{bochner_inequ}, see \cite{Savare_selfimprovement,Sturm_selfimprovement}. Moreover, it is an interesting feature that all the above inequalities are in fact equivalent to the lower Ricci curvature bound. Using the Bochner inequality it is then a simple exercise to show that Ricci bounded below by $-\kappa$ is also equivalent to several other geometric-analytic estimates, e.g. that $e^{-\frac{\kappa}{2}t}|\nabla H_t f|$ is a subsolution to the heat flow, the sharp gradient 
\begin{equation}\label{heat_grad_est}
\abs{\nabla H_t f}\leq e^{\frac{\kappa}{2}t}H_t \abs{\nabla f}
\end{equation}
for the heat flow, as well as a sharp log-Sobolev inequality, a sharp spectral gap, etc; see e.g. \cite{BakryEmery,BakryLedoux} for much more on that.\\

\noindent\textbf{Characterizations of bounded Ricci curvature.}
In contrast to the well developed theory of Ricci curvature bounded below, until recently there was no characterization available at all for spaces with bounded Ricci curvature. This characterization problem has been solved recently by the second author \cite{Naber_char}. The key insight was that to understand two-sided bounds for Ricci curvature, and not just lower bounds, one should do analysis on path space $PM$, instead of analysis on $M$. By definition, given a complete Riemannian manifold $M$, its path space $PM=C([0,\infty),M)$ is the space of continuous curves in $M$. Path space comes equipped with a family of natural probability measures, the Wiener measure $\Gamma_x$ of Brownian motion starting at $x\in M$. Path space also comes equipped with a natural one parameter family of gradients, the $s$-parallel gradients $\nabla_s^\parallel$ ($s\geq 0$), which are given by considering derivatives of a function $F$ by vector fields which are parallel past time $s$, see Section \ref{ss:prelim:gradients} for precise definitions. Using this framework, it was proved in \cite{Naber_char} that the Ricci curvature of $M$ is bounded by a constant $\kappa$ if and only if the sharp gradient estimate
\begin{equation}\label{inf_grad_est_intro}
\left|\nabla_x \int_{PM} F\, d\Gamma_x\right| \leq \int_{PM}\left(\abs{\nabla_0^\parallel F}+\int_0^\infty\frac{\kappa}{2}e^{\kappa s/2}\abs{\nabla_s^\parallel F}\, ds\right) \, d\Gamma_x
\end{equation}
holds for all test functions $F:PM\to\mathbb{R}$. In the simplest case of one-point test functions, i.e. functions of the form $F(\gamma)=f(\gamma(t))$ where $f:M\to\mathbb{R}$ and $t$ is fixed, the infinite dimensional gradient estimate \eqref{inf_grad_est_intro} reduces to the finite dimensional gradient estimate \eqref{heat_grad_est}. Of course, one can also consider test functions depending on more than one single time, and this is one of the reasons why the infinite dimensional gradient estimate \eqref{inf_grad_est_intro} is strong enough to characterize two-sided Ricci bounds, and not just lower bounds. Further characterizations of bounded Ricci curvature have been obtained in terms of a sharp log-Sobolev inequality on path space and a sharp spectral gap for the Ornstein-Uhlenbeck operator, see \cite{Naber_char}. These ideas have  been implemented also in the parabolic setting to characterize solutions of the Ricci flow \cite{HaslhoferNaber_flow}. Another interesting variant of the characterizations of bounded Ricci curvature from \cite{Naber_char} has been obtained recently by Fang-Wu \cite{FangWu} and Wang-Wu \cite{WangWu}.\\

\subsection{Bochner Formula for Martingales}

\noindent\textbf{Generalizing the Bochner formula.} While \cite{Naber_char} gives a way to generalize certain estimates for lower Ricci curvature on $M$ to estimates for bounded Ricci curvature on $PM$, e.g. the finite dimensional gradient estimate \eqref{heat_grad_est} to the infinite dimensional gradient estimate \eqref{inf_grad_est_intro}, what hasn't been answered yet is the following question:\\

\emph{Is there any way to generalize the fundamental Bochner inequality \eqref{bochner_inequ} from $M$ to $PM$?}\\

This question has been the guiding principle for the present paper. Given that the Bochner inequality is the starting point for most of the theory of lower Ricci curvature, such a generalization would be clearly very valuable for the theory of bounded Ricci curvature. 
As we will see, the question does not amount to a straightforward translation (e.g. a first naive guess would be to simply replace the Laplacian on $M$ by the Ornstein-Uhlenbeck operator on $PM$), but in fact led us to reconsider some of the most basic aspects of stochastic analysis, such as the martingale representation theorem and submartingale inequalities.\\

\noindent\textbf{Martingales on path space.} The first main point we wish to explain is that martingales on $PM$ are the correct generalization of the heat flow on $M$. To describe this, given a complete Riemannian manifold $M$, consider its path space $PM=C([0,\infty),M)$ equipped with the Wiener measure $\Gamma_x$ and the parallel gradient $\nabla_s^\parallel$, as above. Implicit in the definition of the Wiener measure is a $\sigma$-algebra $\Sigma$ of measurable subsets of $PM$ together with a filtration $\Sigma_t\subset \Sigma$, which simply describes events which are observable until time $t$, i.e. which depend only on the $[0,t]$-part of the curves. A \emph{martingale} on $P_xM$ is a $\Sigma_t$-adapted integrable stochastic process $F_t:P_xM\to\mathbb{R}$ such that
\begin{equation}
F_{t_1} = E[F_{t_2}\, |\, \Sigma_{t_1}] \equiv E_{t_1}[F_{t_2}]\qquad (t_1\leq t_2).
\end{equation}
Here, the right hand side denotes the conditional expectation value on $P_xM$ given the $\sigma$-algebra $\Sigma_{t_1}$.  
The simplest examples of martingales on path space have the form
\begin{equation}\label{e:one_cylinder_example}
F_t(\gamma)=
\begin{cases}
    H_{T-t}f(\gamma(t)),& \text{if } t< T\\
    f(\gamma(T)),              & \text{if } t\geq T,
\end{cases}
\end{equation}
where $f:M\to\mathbb{R}$ and $T$ are fixed, and thus are indeed given by the (backwards) heat flow on $M$.  Given $F\in L^2(P_x M,\Gamma_x)$ we will often consider the induced martingale $F_t\equiv E_t[F]$.  From the above one might hypothesize that $E_t[F]$ plays a role similar to that of the (backwards) heat flow on $M$.  In fact, this analogy will develop much further as we progress.\\

\noindent\textbf{Evolution equations on path space.}
We found that the correct generalization of the Bochner inequality \eqref{bochner_inequ} on $M$ is given by a certain evolution inequality for martingales on $PM$. To get there, we start with by reformulating the martingale representation theorem and the Clark-Ocone formula \cite{Fang,Hsu} in the following way (see Section \ref{ss:martingale_representation} for a proof):

\begin{theorem}[Martingale Representation Theorem]\label{t:martingale_representation_intro}
If $F_t$ is a martingale on $P_xM$, then $F_t$ solves the stochastic differential equation
\begin{align}
dF_t = \langle \nabla_t^\parallel F_t, dW_t\rangle,
\end{align}
where $\nabla_t^\p$ is the parallel gradient (provided that $F_t$ is in the domain of $\nabla_t^\p$).
\end{theorem}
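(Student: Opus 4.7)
My plan is to deduce the martingale representation from the Clark--Ocone formula on path space, together with a commutation identity between the parallel gradient $\nabla_t^\parallel$ and the conditional expectation $E_t$.

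The first step is to reduce to martingales arising from a terminal value. For an $L^2$-bounded martingale $F_t$, martingale convergence yields a terminal value $F_\infty\in L^2(P_xM,\Gamma_x)$ with $F_t = E_t[F_\infty]$; more general sufficiently integrable martingales reduce to this case by standard approximation. It therefore suffices to prove the claim for $F_t = E_t[F]$ with $F$ in the domain of $\nabla^\parallel$.

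Next I would invoke the Clark--Ocone formula of Fang and Hsu \cite{Fang,Hsu}, which states that
\begin{equation}
F = E[F] + \int_0^\infty \langle E_s[\nabla_s^\parallel F],\, dW_s\rangle.
\end{equation}
Taking $E_t[\cdot]$ of both sides and using that the It\^o integral is itself a martingale in its upper limit yields
\begin{equation}
F_t = E[F] + \int_0^t \langle E_s[\nabla_s^\parallel F],\, dW_s\rangle,
\end{equation}
so that in differential form $dF_t = \langle E_t[\nabla_t^\parallel F],\, dW_t\rangle$. The desired conclusion will follow once the integrand is identified with $\nabla_t^\parallel F_t$.

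The heart of the argument is thus the commutation identity $\nabla_t^\parallel F_t = E_t[\nabla_t^\parallel F]$. Here the structure of the parallel gradient enters decisively: $\nabla_t^\parallel$ differentiates along Cameron--Martin variations which vanish on $[0,t]$ and are parallel translates past time $t$. Such variations leave $\gamma|_{[0,t]}$ untouched and are therefore ``orthogonal'' to the information encoded in $\Sigma_t$, which should allow one to interchange $\nabla_t^\parallel$ with $E_t[\cdot]$ on a dense subclass of cylinder functions and to extend by approximation. I expect this commutation to be the main technical obstacle: one must carefully verify that $F_t$ lies in the domain of $\nabla_t^\parallel$ whenever $F$ does, justify the interchange in a norm strong enough to ensure convergence of both the parallel gradient and the stochastic integral, and ensure that the cylinder functions used for approximation are indeed dense in the appropriate Sobolev-type space on $P_xM$.
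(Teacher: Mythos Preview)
Your approach has a genuine gap: the commutation identity $\nabla_t^\parallel E_t[F] = E_t[\nabla_t^\parallel F]$ is \emph{false} on a general Riemannian manifold with nonzero Ricci curvature, and correspondingly the Clark--Ocone formula you write down (with the undamped parallel gradient $\nabla_s^\parallel$) is not the formula proved by Fang and Hsu.

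To see the failure of the commutation, take the one-point cylinder function $F(\gamma)=f(\gamma(T))$. Then $F_t=H_{T-t}f(\gamma(t))$ and
\[
\nabla_t^\parallel F_t = P_t\,\nabla H_{T-t}f(\gamma(t)),
\qquad
E_t[\nabla_t^\parallel F] = P_t\, E^{\gamma(t)}\!\big[\tilde P_{T-t}\,\nabla f(\tilde X_{T-t})\big],
\]
where $\tilde X$ is Brownian motion from $\gamma(t)$ and $\tilde P$ its stochastic parallel transport. The right-hand expectation is the heat semigroup of the \emph{connection} (rough) Laplacian applied to $\nabla f$, whereas $\nabla H_{T-t}f$ is governed by the \emph{Hodge} Laplacian on $1$-forms; by the Weitzenb\"ock formula these differ exactly by a Ricci term. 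Hence the two expressions disagree unless $\Ric\equiv 0$. For the same reason, the Clark--Ocone formula on Riemannian path space in \cite{Fang,Hsu} involves a Ricci-damped gradient, not the bare $\nabla_s^\parallel$; so the formula you invoke in your second step is already incorrect in curved space. (The two errors are in some sense dual, but neither step stands on its own.)

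The paper takes a different route that sidesteps this issue entirely: for a cylinder function $F(\gamma)=f(\gamma(t_1),\ldots,\gamma(t_N))$ it writes $F_t$ explicitly as $f_t(X_{t_1},\ldots,X_{t_\beta},X_t)$ with $(\partial_t+\tfrac12\Delta^{(\beta+1)})f_t=0$, lifts to the frame bundle, and applies the It\^o formula there. The heat equation kills the drift, and the remaining martingale part is identified directly with $\langle \nabla_t^\parallel F_t, dW_t\rangle$. No commutation of $\nabla_t^\parallel$ with $E_t$ is ever needed. Indeed, the paper explicitly remarks after the theorem that the gradient and the expectation appear in the \emph{opposite} order compared to the usual Clark--Ocone formula, and that swapping them amounts to an integration by parts on path space---precisely the nontrivial step your proposal treats as a straightforward interchange.
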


Note that the gradient and the expectation in Theorem \ref{t:martingale_representation_intro} are taken in the opposite order as in the usual formulation of the Clark-Ocone formula (this essentially amounts to a partial integration on path space). Expressed this way, we can view the martingale equation as an evolution equation on path space.  It is worth pointing out that the $dW_t$ term also behaves as a spatial derivative, in fact a form of divergence, so that the evolution equation in Theorem \ref{t:martingale_representation_intro} is analogous to a heat equation.

We then proceed by computing various evolution equations for associated quantities on path space. The most important for us is the following evolution equation for the parallel gradient of a martingale on path space.

\begin{theorem}[Evolution of the parallel gradient]\label{thm_ev_par_intro}
If $F_t:P_xM\to \dR$ is a martingale on path space, and $s\geq 0$ is fixed, then its $s$-parallel gradient $\nabla_s^\p F_t: P_xM\to T_xM$ satisfies the stochastic equation
\begin{align}
d \nabla_s^\p F_t = \langle \nabla_t^\p\nabla_s^\p F_t,dW_t\rangle + \frac{1}{2}\Ric_t(\nabla_t^\p F_t)\, dt + \nabla_{s}^\p F_s\,\delta_{s}(t)dt\, ,
\end{align}
where $\langle\Ric_t(X),Y\rangle=\Ric(P_t^{-1}X,P_t^{-1} Y)$ and $P_t=P_t(\gamma):T_{\gamma(t)}M\to T_xM$ is stochastic parallel transport.

\end{theorem}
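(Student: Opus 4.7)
The plan is to apply the $s$-parallel gradient to the integrated form of the martingale representation
$$F_t = F_0 + \int_0^t \langle \nabla_u^\p F_u,\, dW_u\rangle$$
from Theorem~\ref{t:martingale_representation_intro}. The distributional structure of the claim suggests a natural split: the Dirac delta $\nabla_s^\p F_s\,\delta_s(t)\,dt$ encodes the initial value of the process $\nabla_s^\p F_t$ at the time $t=s$, while the remaining drift and martingale terms describe its continuous evolution for $t>s$.

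Step 1 (initial condition at $t=s$): The value $\nabla_s^\p F_s$ arises immediately from the definition of the $s$-parallel gradient, and writing it as a Dirac delta in the time derivative is equivalent to imposing $\nabla_s^\p F_t\big|_{t=s} = \nabla_s^\p F_s$ as the initial condition for the SDE on $\{t\geq s\}$. The $t<s$ regime is trivial from the point of view of the SDE since the $s$-parallel variations act trivially on earlier segments of the path.

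Step 2 (evolution for $t>s$): I would compute $d\nabla_s^\p F_t$ by pulling $\nabla_s^\p$ inside the stochastic integral $\int_s^t \langle \nabla_u^\p F_u,\,dW_u\rangle$. This generates two commutators. The interchange of $\nabla_s^\p$ with $\nabla_u^\p$ on path space produces a curvature-type term, while the action of $\nabla_s^\p$ on the Brownian integrator $dW_u$ produces a Cameron--Martin correction that, after parallel transport from time $s$ to time $u$, picks up a holonomy contribution. Combining the two and contracting with the symmetric part of the It\^o product, the antisymmetry of the Riemann tensor collapses the full curvature correction to the trace $\tfrac{1}{2}\Ric_t(\nabla_t^\p F_t)\,dt$, which is precisely the path-space analogue of the classical identity $\nabla\Delta = \Delta\nabla - \Ric(\nabla\cdot)$ underlying the usual Bochner formula. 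The residual commuted term is exactly the martingale piece $\langle \nabla_t^\p\nabla_s^\p F_t,\,dW_t\rangle$, completing the SDE.

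The main obstacle I expect is the rigorous justification of interchanging $\nabla_s^\p$ with the It\^o stochastic integral, and the precise identification of the resulting curvature contribution as the tensor $\Ric_t$ defined in the statement. Concretely, this demands controlling $\nabla_u^\p F_u$ as an $L^2$-valued process in $u$, interpreting the iterated parallel gradient $\nabla_t^\p\nabla_s^\p F_t$ in a suitable domain on path space, and applying Bismut-style integration by parts on $P_xM$ to legitimize passing $\nabla_s^\p$ past the stochastic integral before the Ricci correction can be read off from the curvature of stochastic parallel transport.
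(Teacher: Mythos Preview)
Your approach is genuinely different from the paper's, and the outline as written does not yet isolate the mechanism that produces the Ricci term.

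The paper does \emph{not} differentiate the martingale representation formula. Instead it works directly with cylinder functions $F(\gamma)=f(\gamma(t_1),\ldots,\gamma(t_N))$, lifts everything to the orthonormal frame bundle $FM$, and applies the It\^o formula there to the explicit expression for $\nabla_s^\p F_t$ written in terms of horizontal vector fields $H_i^{(\alpha)}$. The Ricci drift then falls out of the single commutator identity $\Delta_H H_i - H_i\Delta_H = R_{ij}H_j$ (Corollary~\ref{cor_commutators}) applied to the function $\tilde f_t$, which satisfies the horizontal heat equation $(\partial_t+\tfrac12\Delta_H)\tilde f_t=0$. No Malliavin integration by parts, no commutation of $\nabla_s^\p$ with a stochastic integral, and no Cameron--Martin correction enter; the argument is a finite-dimensional frame-bundle computation followed by density of cylinder functions. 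The $\delta_s(t)$ term is explained (Remark~\ref{rem_delta_notation}) simply by $\Sigma_t$-measurability of $F_t$: for $t<s$ one has $\nabla_s^\p F_t\equiv 0$, so the jump at $t=s$ is the initial value.

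Your Step~2 misidentifies the source of the curvature. The parallel gradients $\nabla_s^\p$ and $\nabla_u^\p$, as operators on cylinder functions, do not produce a Ricci term by commuting with each other; the curvature enters because differentiating $\nabla_u^\p F_u$ (which involves stochastic parallel transport $P_u$) along a variation supported on $[s,\infty)$ picks up the derivative of $P_u$ along that variation. Your ``Cameron--Martin correction from $\nabla_s^\p$ acting on $dW_u$'' is not a well-defined object here and conflates a Girsanov-type shift with a pathwise derivative of the integrator. If you want to make your route rigorous you would need a genuine commutation formula for the Malliavin derivative with the It\^o integral on a curved target (e.g.\ in the spirit of Driver or Hsu), and then identify the resulting term as $\tfrac12\Ric_t(\nabla_t^\p F_t)$; this is substantially heavier machinery than the paper's direct frame-bundle calculation, and your sketch does not yet carry it out.
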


Using Theorem \ref{thm_ev_par_intro} we can derive other evolution equations. In particular, we obtain our generalized Bochner formula:

\begin{theorem}[Bochner formula on path space]\label{thm_bochner_pathspace}
If $F_t:P_xM\to \dR$ is a martingale, and $s\geq 0$ is fixed, then
\begin{align}\label{equ_gen_boch}
d |\nabla_s^\parallel F_t|^2 = \langle \nabla_t^\p|\nabla_s^\p F_t|^2,dW_t\rangle + |\nabla_t^\p\nabla^\p_s F_t|^2dt +\Ric_t\big(\nabla_t^\p F_t,\nabla_s^\p F_t\big)\, dt + |\nabla_s^\p F_s|^2 \delta_{s}(t)dt\, ,
\end{align}
where $\Ric_t(X,Y)=\Ric(P_t^{-1}X,P_t^{-1} Y)$ and $P_t=P_t(\gamma):T_{\gamma(t)}M\to T_xM$ denotes stochastic parallel transport.
\end{theorem}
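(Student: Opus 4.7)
The plan is to apply It\^o's formula to the squared norm of the $T_xM$-valued semimartingale $Y_t := \nabla_s^\p F_t$, whose stochastic differential is provided by Theorem \ref{thm_ev_par_intro}. Fixing an orthonormal frame of $T_xM$ indexed by $i$ (for the values of $Y$) and $j$ (for the Brownian motion), the SDE of Theorem \ref{thm_ev_par_intro} reads
$$dY_t^i = (\nabla_t^\p \nabla_s^\p F_t)^i{}_j\, dW_t^j + \tfrac{1}{2}(\Ric_t(\nabla_t^\p F_t))^i\, dt + (\nabla_s^\p F_s)^i\,\delta_s(t)\, dt.$$

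Away from $t=s$ the It\^o product rule gives $d|Y_t|^2 = 2\sum_i Y_t^i\, dY_t^i + \sum_i d[Y^i,Y^i]_t$, which I would evaluate in three pieces. First, the stochastic integral $2\sum_{i,j} Y_t^i (\nabla_t^\p \nabla_s^\p F_t)^i{}_j\, dW_t^j$ collapses to $\langle \nabla_t^\p|\nabla_s^\p F_t|^2, dW_t\rangle$ via the derivation property of the parallel gradient, namely the Leibniz identity $\nabla_t^\p|\nabla_s^\p F_t|^2 = 2\langle \nabla_s^\p F_t,\nabla_t^\p \nabla_s^\p F_t\rangle$. Second, the Ricci drift yields $\langle \nabla_s^\p F_t, \Ric_t(\nabla_t^\p F_t)\rangle\, dt = \Ric_t(\nabla_t^\p F_t,\nabla_s^\p F_t)\, dt$ by symmetry of the Ricci tensor. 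Third, the quadratic variation of the martingale part contributes $\sum_{i,j}|(\nabla_t^\p \nabla_s^\p F_t)^i{}_j|^2\, dt = |\nabla_t^\p \nabla_s^\p F_t|^2\, dt$. Together these match the first three terms of \eqref{equ_gen_boch}.

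For the Dirac delta at $t=s$ the key observation is that $\nabla_s^\p F_t = 0$ for $t<s$: since $F_t$ is $\Sigma_t$-measurable, it has no variation in directions supported past time $s$. Therefore $Y$ has a jump at $t=s$ from $Y_{s^-}=0$ to $Y_s=\nabla_s^\p F_s$, and the corresponding jump in $|Y|^2$ equals
$$|Y_s|^2 - |Y_{s^-}|^2 = 2\langle Y_{s^-}, \Delta Y\rangle + |\Delta Y|^2 = |\nabla_s^\p F_s|^2,$$
producing the last term $|\nabla_s^\p F_s|^2\,\delta_s(t)\,dt$. The main obstacle I foresee is the rigorous treatment of this jump: the naive formal manipulation $d|Y_t|^2 = 2\langle Y_t, dY_t\rangle + \cdots$ with $dY_t \supset \nabla_s^\p F_s\,\delta_s(t)dt$ would produce $2\langle Y_t,\nabla_s^\p F_s\rangle\,\delta_s(t)\,dt$ rather than $|\nabla_s^\p F_s|^2\,\delta_s(t)\,dt$. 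The resolution is to use the predictable (left) limit $Y_{s^-}=0$ in the linear piece, so the entire jump contribution comes from the second-order Taylor correction; this can be justified by replacing $\delta_s$ with a smooth bump approximation and passing to the limit. All other steps are a routine application of It\^o calculus to the output of Theorem \ref{thm_ev_par_intro}.
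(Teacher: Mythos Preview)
Your proposal is correct and follows essentially the same route as the paper: apply It\^o's formula to $|\nabla_s^\p F_t|^2$ using the evolution equation for $\nabla_s^\p F_t$ from Theorem~\ref{thm_ev_par_intro}, identify the martingale part via the Leibniz rule, the drift via symmetry of $\Ric_t$, and the quadratic variation as $|\nabla_t^\p\nabla_s^\p F_t|^2\,dt$. The paper treats the $\delta_s(t)$ term more tersely than you do---it simply works on $t>s$, notes that $|\nabla_s^\p F_t|^2=0$ for $t<s$, and regards the $\delta$-term as notational shorthand for the resulting jump (cf.\ Remark~\ref{rem_delta_notation})---so your more careful discussion of why the jump contributes $|\nabla_s^\p F_s|^2$ rather than $2\langle Y_s,\nabla_s^\p F_s\rangle$ is a welcome elaboration rather than a deviation.
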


Theorem \ref{thm_bochner_pathspace} is the correct way to generalize the Bochner formula to path space. The crucial difference to the classical Bochner formula is that in the generalized Bochner formula \eqref{equ_gen_boch} the Ricci curvature, due to the nonpointwise nature of the $\nabla_s^\p$-gradient, enters in a more substantial way.
As a consequence, we will see that estimates derived from our generalized Bochner inequality (see Section \ref{ss:intro:bochner_ineq}) are actually strong enough to characterize two-sided Ricci bounds, and not just lower bounds.

Using our formalism, we can also compute many other useful evolution equations on path space (besides the ones from Theorem \ref{t:martingale_representation_intro}, Theorem \ref{thm_ev_par_intro} and Theorem \ref{thm_bochner_pathspace}); these additional formulas are in Section \ref{sec_ev_on_pathspace}.

\subsection{Generalized Bochner Inequality for Martingales}\label{ss:intro:bochner_ineq}

Using Theorem \ref{thm_bochner_pathspace} we then see that under the assumption of bounded Ricci curvature $|\Ric|\leq \kappa$ we have the generalized Bochner inequality
\begin{align}\label{e:intro:generalized_bochner_ineq}
	d |\nabla_s^\parallel F_t|^2 \geq \langle \nabla_t^\p|\nabla_s^\p F_t|^2,dW_t\rangle + |\nabla_t^\p\nabla^\p_s F_t|^2dt  - \kappa |\nabla_t^\p F_t|\,|\nabla_s^\p F_t|\, dt + |\nabla_s^\p F_s|^2 \delta_{s}(t)dt\, .
\end{align}

In the same vein as the classical case, from this one can formulate the dimensional generalized Bochner inequality
\begin{align}\label{e:intro:dim_bochner_ineq}
	d |\nabla_s^\parallel F_t|^2 \geq \langle \nabla_t^\p|\nabla_s^\p F_t|^2,dW_t\rangle + \tfrac{1}{n}|\Delta^\p_{s,t} F_t|^2dt  - \kappa |\nabla_t^\p F_t|\,|\nabla_s^\p F_t|\, dt + |\nabla_s^\p F_s|^2 \delta_{s}(t)dt\, ,
\end{align}
as well as the weak generalized Bochner formula
\begin{align}\label{e:intro:weak_bochner_ineq}
	d |\nabla_s^\parallel F_t|^2 \geq \langle \nabla_t^\p|\nabla_s^\p F_t|^2,dW_t\rangle  - \kappa |\nabla_t^\p F_t|\,|\nabla_s^\p F_t|\, dt + |\nabla_s^\p F_s|^2 \delta_{s}(t)dt\, .
\end{align}

We will see in Theorem \ref{thm_new_char} that these inequalities are in fact equivalent to the two sided Ricci curvature bound.  Additionally, we will see in the same way that the classical Bochner formula may be used to prove various gradient and hessian estimates on the heat flow on $M$, we can use the martingale Bochner formula to prove analogous estimates on martingales.  

To provide some brief intuition for the formula and its equivalence to a two sided Ricci bound, let us see that it genuinely generalizes the classical Bochner inequality.  That is, by applying \eqref{e:intro:generalized_bochner_ineq} for $s=0$ to the simplest functions on path space, namely those of the form $F(\gamma)\equiv f(\gamma(T))$, let us outline how we recover the classical Bochner inequality \eqref{bochner_inequ}:  Using \eqref{e:one_cylinder_example} and that $\nabla_0^\p$ is obtained by considering variations which are parallel it is an easy but instructive exercise to compute for $0\leq t\leq T$ that
\begin{align}\label{e:gen_boch_2_class_boch}
|\nabla_0^\p F_t|(\gamma) = |\nabla_t^\p F_t|(\gamma) = |\nabla H_{T-t} f|(\gamma(t))\, ,\;\;\text{ and } \;\; |\nabla_s^\p\nabla_t^\p F_t|(\gamma) = |\nabla^2 H_{T-t}f|(\gamma(t))\, .
\end{align}
Thus, the generalized Bochner inequality \eqref{e:intro:generalized_bochner_ineq} tells us that the process $X_t \equiv |\nabla H_{T-t}f|^2(\gamma(t))$ satisfies the evolution inequality
\begin{equation}\label{eq_intro_compare1}
dX_t-\langle \nabla_t^\p X_t,dW_t\rangle \geq  |\nabla^2 H_{T-t}f|^2\, dt - \kappa |\nabla H_{T-t} f|^2\, dt\, .
\end{equation}
On the other hand, applying the Ito formula to the process $X_t \equiv |\nabla H_{T-t}f|^2(\gamma(t))$ gives us that
\begin{align}\label{eq_intro_compare2}
d X_t - \langle \nabla_t^\p X_t,dW_t\rangle = \left(\tfrac{1}{2}\Delta + \partial_t\right)|\nabla H_{T-t}f|^2 \,dt\, .	
\end{align}
Comparing \eqref{eq_intro_compare1} with \eqref{eq_intro_compare2} we conclude that for each $f:M\to \dR$ we have
\begin{align}
	\left(\tfrac{1}{2}\Delta + \partial_t\right)|\nabla H_{T-t}f|^2 \geq |\nabla^2 H_{T-t} f|^2-\kappa |\nabla H_{T-t} f|^2\, ,
\end{align}
which is the backward time version of the classical Bochner inequality \eqref{bochner_inequ}.  In particular, this tells us that the martingale Bochner inequality \eqref{e:intro:generalized_bochner_ineq} implies that the Ricci curvature is bounded below by $-\kappa$. 
That the martingale Bochner inequality \eqref{e:intro:generalized_bochner_ineq} also captures the upper Ricci bound is a bit more subtle, and requires, roughly speaking, test functions where $\nabla_0^\p F_t\approx - \nabla_t^\p F_t$.
This will be made precise in Section \ref{ss:converse}.

\subsection{Applications of Martingale Bochner Formula}
We will now discuss four applications of our calculus for martingales on path space.\\

\noindent\textbf{New Characterizations of Bounded Ricci Curvature.}  Our first application is to give new characterizations of bounded Ricci curvature in terms of generalized Bochner inequalities on path space:

\begin{theorem}[New characterizations of bounded Ricci]\label{thm_new_char}
For a smooth complete Riemannian manifold $(M^n,g)$ the following are equivalent to the Ricci curvature bound $-\kappa g\leq \Rc \leq \kappa g$:
\begin{enumerate}[(C1)]
\item Martingales on path space satisfy the full Bochner inequality
\begin{align}\label{bochner_ineq_intro}
d |\nabla_s^\parallel F_t|^2 &\geq \langle \nabla_t^\p|\nabla_s^\p F_t|^2,dW_t\rangle +\abs{\nabla_t^\p\nabla_s^\p F_t}^2 dt -\kappa\abs{\nabla_s^\p F_t}\abs{\nabla_t^\p F_t}\, dt + \abs{\nabla_s^\p F_s}^2 \delta_s(t)dt\, .
\end{align}
\item Martingales on path space satisfy the dimensional Bochner inequality
\begin{align}\label{bochner_ineq_intro_dim}
d |\nabla_s^\parallel F_t|^2 &\geq \langle \nabla_t^\p|\nabla_s^\p F_t|^2,dW_t\rangle +\tfrac{1}{n}\abs{\Delta_{s,t}^\p F_t}^2 dt -\kappa\abs{\nabla_s^\p F_t}\abs{\nabla_t^\p F_t}\, dt + \abs{\nabla_s^\p F_s}^2 \delta_s(t)dt\, ,
\end{align}
where $\Delta_{s,t}^\p = \tr(\nabla_t^\p\nabla_s^\p)$ denotes the parallel Laplacian.
\item Martingales on path space satisfy the weak Bochner inequality
\begin{align}\label{bochner_ineq_weak_intro}
d |\nabla_s^\parallel F_t|^2 \geq \langle \nabla_t^\p|\nabla_s^\p F_t|^2,dW_t\rangle -\kappa \abs{\nabla_s^\p F_t} \abs{\nabla_t^\p F_t}\, dt + \abs{\nabla_s^\p F_s}^2 \delta_s(t)dt\, .
\end{align}
\item Martingales on path space satisfy the linear Bochner inequality
\begin{align}\label{bochner_ineq_weak_linear_intro}
d |\nabla_s^\parallel F_t| \geq \langle \nabla_t^\p|\nabla_s^\p F_t|,dW_t\rangle -\frac{\kappa}{2}\abs{\nabla_t^\p F_t}\, dt + \abs{\nabla_s^\p F_s} \delta_s(t)dt\, .
\end{align}
\item If $F_t$ is a martingale, then $t\mapsto |\nabla_s^\p F_t|+\frac{\kappa}{2}\int_s^t\abs{\nabla^\p_r F_r}\, dr$ is a submartingale for every $s\geq 0$.
\end{enumerate}
\end{theorem}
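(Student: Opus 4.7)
The plan is to establish the equivalences via two parallel chains of implications from the Ricci bound, closed by proving the converses (C3) $\Rightarrow$ Ricci bound and (C5) $\Rightarrow$ Ricci bound. The two chains are Ricci bound $\Rightarrow$ (C1) $\Rightarrow$ (C2) $\Rightarrow$ (C3) and Ricci bound $\Rightarrow$ (C1) $\Rightarrow$ (C4) $\Rightarrow$ (C5); together with the two converses, each condition in between is sandwiched and hence equivalent to the Ricci bound.

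\textbf{Forward implications.} By Theorem \ref{thm_bochner_pathspace}, the drift of $d|\nabla_s^\p F_t|^2$ equals $(|\nabla_t^\p\nabla_s^\p F_t|^2 + \Ric_t(\nabla_t^\p F_t, \nabla_s^\p F_t))\, dt$ plus the $\delta_s(t)$ jump. Under $|\Ric| \leq \kappa$ the inequality $\Ric_t(X,Y) \geq -\kappa|X||Y|$ immediately yields (C1); the matrix trace inequality $|A|^2 \geq \tfrac{1}{n}(\tr A)^2$ applied to $A = \nabla_t^\p\nabla_s^\p F_t$ yields (C2); and dropping the nonnegative dimensional term yields (C3). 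For (C1) $\Rightarrow$ (C4), I would apply Ito's formula to $Y_t = \sqrt{|\nabla_s^\p F_t|^2 + \varepsilon}$. Setting $U_t = |\nabla_s^\p F_t|^2$ and using Theorem \ref{t:martingale_representation_intro}, one has $d\langle U\rangle_t = |\nabla_t^\p U_t|^2\, dt$; the chain rule together with a Kato-type inequality gives $|\nabla_t^\p U_t|^2 \leq 4 U_t |\nabla_t^\p\nabla_s^\p F_t|^2$, so the Ito correction $-\tfrac{1}{8 Y_t^3}\, d\langle U\rangle_t$ is bounded below by $-\tfrac{|\nabla_t^\p\nabla_s^\p F_t|^2}{2 Y_t}\, dt$. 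This exactly cancels the positive Hessian contribution $\tfrac{|\nabla_t^\p\nabla_s^\p F_t|^2}{2 Y_t}\, dt$ obtained from (C1) after dividing by $2Y_t$, and passing $\varepsilon \to 0$ produces (C4). Finally, integrating (C4) in $t$ (with $\delta_s(t)$ absorbing the jump of $t \mapsto |\nabla_s^\p F_t|$ at $t = s$) yields (C5).

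\textbf{Lower Ricci bound.} For the converse direction, the lower bound $\Ric \geq -\kappa g$ follows from either (C3) or (C5) by testing against one-time cylinder functions $F(\gamma) = f(\gamma(T))$ at $s = 0$. Via the identities \eqref{e:gen_boch_2_class_boch}, the path-space inequality reduces to the classical weak Bochner inequality on $M$ (for (C3)) or to the sharp gradient estimate $|\nabla H_T f| \leq e^{\kappa T/2} H_T|\nabla f|$ (for (C5)); each of these is well known to be equivalent to $\Ric \geq -\kappa g$ by a small-$T$ Taylor expansion at any point $x \in M$.

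\textbf{Upper Ricci bound.} The main obstacle is proving $\Ric \leq \kappa g$, for which one-time cylinder functions are insufficient, since for them $\nabla_0^\p F_t = \nabla_t^\p F_t$ and the inequalities are symmetric under $\Ric \to -\Ric$. Following the strategy of \cite{Naber_char}, I would test (C3) and (C5) against multi-time cylinder functions $F(\gamma) = f_1(\gamma(t_1)) f_2(\gamma(t_2))$ with $t_1 < t_2$, chosen so that at short time increments the variations producing $\nabla_0^\p F_t$ and $\nabla_t^\p F_t$ point in opposite directions. In this regime the Ricci term $\Ric(\nabla_t^\p F_t, \nabla_s^\p F_t)$ from the exact Bochner identity of Theorem \ref{thm_bochner_pathspace} effectively takes the form $-\Ric(X,X)$, and the submartingale/weak Bochner condition forces $\Ric(X,X) \leq \kappa|X|^2$ for arbitrary $X$. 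Carrying out this test function selection within the integrated framework of (C5), where only expectations of the Bochner quantities are controlled, is the technical heart of the argument.
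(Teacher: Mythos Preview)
Your proposal is correct and follows essentially the same route as the paper: the identical forward chain $|\Ric|\leq\kappa \Rightarrow (C1) \Rightarrow (C2) \Rightarrow (C3)$ and $(C1) \Rightarrow (C4) \Leftrightarrow (C5)$ via the Bochner identity of Theorem \ref{thm_bochner_pathspace} together with the Kato inequality, and the converse via one- and two-point cylinder test functions chosen so that $\nabla_0^\p F_t$ and $\nabla_t^\p F_t$ are parallel (for the lower bound) or anti-parallel (for the upper bound). The only notable difference is that the paper makes the upper-bound test function completely explicit as the \emph{additive} combination $F_\epsilon(\gamma) = 2f_1(\gamma(0)) - f_1(\gamma(\epsilon))$ with $f_1(x)=0$, $\nabla f_1(x)=v$, $\nabla^2 f_1(x)=0$, rather than a product $f_1(\gamma(t_1))f_2(\gamma(t_2))$; this choice gives $\nabla_0^\p F_t\approx v$, $\nabla_t^\p F_t\approx -v$ directly and yields the clean expansion $|\nabla_0^\p F_0|^2 = E[|\nabla_0^\p F_\epsilon|^2] + \epsilon\,\Ric(v,v) + o(\epsilon)$, so the same short computation handles all of (C3)--(C5) without a separate integrated argument for (C5).
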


The estimates (C1) -- (C4) generalize the classical Bochner inequalities \eqref{bochner_inequ} -- \eqref{weak_bochner}, and the estimate (C5) generalizes that $e^{-\frac{\kappa}{2}t}|\nabla H_t f|$ is a subsolution to the heat flow. An interesting feature of (C2) is that while being an estimate on the infinite dimensional path space $PM$, it also captures the dimension $n$ of the manifold $M$. In stark contrast to the basic estimates \eqref{bochner_inequ} -- \eqref{weak_bochner}, our new estimates (C1) -- (C5) of Theorem \ref{thm_new_char} are strong enough to characterize two-sided Ricci bounds, and not just lower bounds. Additionally, we shall see below that the characterizations of Theorem \ref{thm_new_char}  give a new and vastly simplified proof of the previous characterizations of bounded Ricci curvature from \cite{Naber_char}.\\

\vspace{.25cm}

\noindent\textbf{New Gradient estimates for Martingales.}  The second application of our generalized Bochner formula concerns gradient estimates for martingales on the path space of manifolds with bounded Ricci curvature.

\begin{theorem}[{Gradient estimates for martingales}]\label{thm_improved_grad_ricci}
For a smooth complete Riemannian manifold $(M,g)$ the following are equivalent to the Ricci curvature bound $-\kappa g\leq \Rc \leq \kappa g$:
\begin{enumerate}[({G}1)]
\item For any $F\in L^2(PM)$ the induced martingale satisfies the gradient estimate
\begin{equation}
\abs{\nabla_s^\p F_t} \leq E_t \left[ \abs{\nabla_s^\p F}+\frac{\kappa}{2} \int_t^\infty e^{\tfrac{\kappa}{2}(r-t)} \abs{\nabla_r^\p F}\, dr \right]\, .
\end{equation}
\item For any $F\in L^2(PM)$ which is $\Sigma_T$-measurable the induced martingale satisfies the gradient estimate
\begin{equation}
\abs{\nabla_s^\p F_t}^2 \leq e^{\tfrac{\kappa}{2}(T-t)} E_t \left[ \abs{\nabla_s^\p F}^2+\frac{\kappa}{2} \int_t^T e^{\tfrac{\kappa}{2}(r-t)} \abs{\nabla_r^\p F}^2\, dr \right]\, .
\end{equation}
\end{enumerate}
\end{theorem}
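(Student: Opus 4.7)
The plan is to derive (G1) and (G2) as integrated forms of the Bochner-type inequalities (C4) and (C3) from Theorem \ref{thm_new_char}, and to close the equivalence by specialising each estimate to $s=t=0$ so as to recover the characterisation \eqref{inf_grad_est_intro} of \cite{Naber_char}.

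For the forward implication in (G2), I would start from the weak Bochner inequality (C3) and apply $2|ab|\leq a^2+b^2$ to the Ricci cross term to obtain, for $t>s$,
\begin{align*}
d|\nabla_s^\p F_t|^2 \geq (\text{mart}) - \tfrac{\kappa}{2}|\nabla_s^\p F_t|^2\,dt - \tfrac{\kappa}{2}|\nabla_t^\p F_t|^2\, dt.
\end{align*}
The integrating factor $e^{\kappa t/2}$ then shows that $e^{\kappa t/2}|\nabla_s^\p F_t|^2 + \tfrac{\kappa}{2}\int_s^t e^{\kappa r/2}|\nabla_r^\p F_r|^2\,dr$ is a submartingale on $[s,\infty)$. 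Applying the submartingale inequality between times $t$ and $T$ and using the identity $\nabla_r^\p F_r=E_r[\nabla_r^\p F]$ from Theorem \ref{t:martingale_representation_intro} together with Jensen, which upgrades $|\nabla_r^\p F_r|^2$ to $E_r[|\nabla_r^\p F|^2]$, then yields (G2).

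For the forward implication in (G1), I would start from the linear Bochner inequality (C4) and introduce the auxiliary process
\begin{align*}
G_t := E_t[|\nabla_s^\p F|] + \tfrac{\kappa}{2}\int_t^\infty e^{\kappa(r-t)/2}\, E_t[|\nabla_r^\p F|]\, dr,
\end{align*}
which is exactly the right-hand side of (G1). An Ito-plus-Fubini computation, tracking the $t$-dependence through the conditional expectations, the lower integration limit, and the exponential weight, gives
\begin{align*}
dG_t = (\text{mart}) - \tfrac{\kappa}{2}E_t[|\nabla_t^\p F|]\, dt - \tfrac{\kappa^2}{4}\Bigl(\int_t^\infty e^{\kappa(r-t)/2}\,E_t[|\nabla_r^\p F|]\,dr\Bigr)dt.
\end{align*}
Subtracting the drift of $|\nabla_s^\p F_t|$ from (C4) and using $|\nabla_t^\p F_t|\leq E_t[|\nabla_t^\p F|]$ (Jensen applied to $\nabla_t^\p F_t=E_t[\nabla_t^\p F]$) shows that $G_t-|\nabla_s^\p F_t|$ is a supermartingale on $[s,\infty)$, and the same Jensen bound gives nonnegativity at $t=s$; the supermartingale property then propagates to all $t\geq s$, which is exactly (G1).

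For the reverse direction, I would specialise to $s=t=0$. Since $F_0=E[F]$ is deterministic and $\nabla_0^\p F_0=\nabla_x\int_{PM}F\, d\Gamma_x$, (G1) at $s=t=0$ reduces precisely to the sharp gradient estimate \eqref{inf_grad_est_intro}, which by \cite{Naber_char} is equivalent to $-\kappa g\leq \Ric\leq \kappa g$. For (G2), I would apply the inequality to one-point cylinder test functions $F(\gamma)=f(\gamma(T))$, where \eqref{e:gen_boch_2_class_boch} recovers the classical heat-flow gradient estimate \eqref{heat_grad_est} and hence the lower Ricci bound $\Ric\geq-\kappa$, and to two-point cylinder functions $F(\gamma)=f(\gamma(T_1))g(\gamma(T_2))$, from which an infinitesimal computation produces the upper Ricci bound, along the lines of the converse argument of Section \ref{ss:converse}. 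The main obstacle is the Ito-plus-Fubini computation of $dG_t$: the process depends on $t$ through three separate channels, and the exponential weight $e^{\kappa(r-t)/2}$ is tuned precisely so that the extra drift produced by differentiating the integral carries the correct (negative) sign to overcome the $\tfrac{\kappa}{2}|\nabla_t^\p F_t|$ contribution from (C4).
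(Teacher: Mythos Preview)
There is a genuine gap: both your derivations of (G1) and (G2) rest on the identity
\[
\nabla_t^\p F_t \;=\; E_t\bigl[\nabla_t^\p F\bigr],
\]
which you invoke via ``Jensen applied to $\nabla_t^\p F_t=E_t[\nabla_t^\p F]$''. This identity is \emph{false} on any manifold with nonvanishing Ricci curvature. Indeed, Theorem~\ref{thm_evol_par_grad} computes that for fixed $s$ the process $t\mapsto\nabla_s^\p F_t$ satisfies
\[
d\nabla_s^\p F_t \;=\; \langle \nabla_t^\p\nabla_s^\p F_t,\,dW_t\rangle \;+\; \tfrac12\,\Ric_t(\nabla_t^\p F_t)\,dt
\qquad(t>s),
\]
so $\nabla_s^\p F_t$ is \emph{not} a martingale in $t$ unless $\Ric\equiv 0$, and hence $\nabla_s^\p F_s\neq E_s[\nabla_s^\p F]$ in general. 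The commutation of the parallel gradient with conditional expectation is exactly the phenomenon controlled by the Ricci curvature; assuming it trivialises the problem. Concretely, in your supermartingale argument for (G1), the drift of $G_t-|\nabla_s^\p F_t|$ is nonpositive only if
\[
|\nabla_t^\p F_t|\;\leq\; E_t\bigl[|\nabla_t^\p F|\bigr] \;+\; \tfrac{\kappa}{2}\!\int_t^\infty\! e^{\frac{\kappa}{2}(r-t)}E_t\bigl[|\nabla_r^\p F|\bigr]\,dr,
\]
which is precisely (G1) at $s=t$ --- the estimate you are trying to prove. Your (G2) argument has the same circularity when you ``upgrade $|\nabla_r^\p F_r|^2$ to $E_r[|\nabla_r^\p F|^2]$''.

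The paper circumvents this obstacle by a recursive bootstrap. Starting from (C5), the submartingale property gives
\[
|\nabla_s^\p F_t|\;\leq\; E_t\Bigl[|\nabla_s^\p F|\;+\;\tfrac{\kappa}{2}\!\int_t^\infty\! |\nabla_r^\p F_r|\,dr\Bigr],
\]
with the unknown quantity $|\nabla_r^\p F_r|$ still on the right. One then feeds the \emph{same} inequality (with $s=t=r$) back into this term and iterates; the resulting nested integrals over simplices sum to the exponential weight $e^{\frac{\kappa}{2}(r-t)}$, yielding (G1). The estimate (G2) is then deduced from (G1) by Cauchy--Schwarz and the elementary inequality $(a+b)^2\leq\gamma a^2+\tfrac{\gamma}{\gamma-1}b^2$, rather than directly from (C3). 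Your plan for the converse direction (specialising to $s=t=0$ and testing against one-- and two--point cylinder functions) is essentially correct and matches the paper's approach.
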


Theorem \ref{thm_improved_grad_ricci} gives pointwise estimates for martingales on the path space of manifolds with bounded Ricci curvature.  These estimates generalize the heat flow estimate for spaces with lower Ricci curvature bounds given in \eqref{heat_grad_est}.  We will see these generalize the gradient estimates from \cite{Naber_char} as well. In fact, our estimates again characterize bounded Ricci curvature, i.e. the estimates (G1) and (G2) hold if and only if $\abs{\Ric}\leq \kappa$.  \\

\vspace{.25cm}


\noindent\textbf{New Hessian Estimates for Martingales.}  Our third application concerns new Hessian bounds for martingales on the path space of manifolds with bounded Ricci curvature. Morally, the Hessian term in the Bochner formula can be either simply discarded noticing that it has the good sign, or can be exploited more carefully. In the case of lower Ricci curvature the extra information contained in the Hessian term has been exploited quite deeply, e.g. in the proof of the splitting theorem \cite{CheegerGromoll,Gigli_splitting} and its effective versions \cite{CheegerColding_almost,CoNa_tc}. In the context of bounded Ricci curvature, we obtain the following new Hessian estimates for martingales on path space, estimates which are new even on $\dR^n$:

\begin{theorem}[Hessian Estimates]\label{thm_hess_est}
Let $(M,g)$ be a complete manifold with $\abs{\Ric}\leq \kappa$, and let $F\in L^2(P_xM)$ be $\Sigma_T$-measurable. Then it holds:
\begin{enumerate}[({H}1)]
\item  For each $s\geq 0$ we have the estimate $$\int_{PM} |\nabla_s^\p F_s|^2\, d\Gamma_x+\int_0^T\!\!\int_{PM}\ |\nabla_t^\p\nabla_s^\p F_t|^2\, d\Gamma_x\, dt \leq e^{\tfrac{\kappa}{2}(T-s)}  \int_{PM} \left(\abs{\nabla_s^\p F}^2+ \frac{\kappa}{2}\int_s^T e^{\tfrac{\kappa}{2}(t-s)}\abs{\nabla_t^\p F}^2\, dt \right) \, d\Gamma_x\, .$$
\item We have the Poincare Hessian estimate $$\int_{PM}\left(F-\int_{PM} F\, d\Gamma_x\right)^2  d\Gamma_x+ \int_0^T\!\!\!\int_0^T\!\!\int_{PM} |\nabla_t^\p\nabla_s^\p F_t|^2\, d\Gamma_x\, ds\, dt \leq e^{\tfrac{\kappa}{2}T} \int_0^T\!\!\! \int_{PM} \cosh(\tfrac{\kappa}{2}s)\abs{\nabla_s^\p F}^2 \, d\Gamma_x \, d s\, . $$ 
\item We have the log-Sobolev Hessian estimate
\begin{align}\int_{PM} F^2\ln F^2 \, d\Gamma_x &-\int_{PM} F^2 \, d\Gamma_x\, \ln \int_{PM} F^2 \, d\Gamma_x
+ \notag\\+&\frac12 \int_0^T\!\!\!\int_0^T\!\!\!\int_{PM} (F^2)_t |\nabla_t^\p\nabla_s^\p \ln(F^2)_t|^2 \, d\Gamma_x\, ds\, dt \leq 2e^{\tfrac{\kappa}{2}T} \int_0^T\!\!\! \int_{PM} \cosh(\tfrac{\kappa}{2}s)\abs{\nabla_s^\p F}^2 \, d\Gamma_x \, d s\notag\, .
\end{align}
\end{enumerate}
\end{theorem}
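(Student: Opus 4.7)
All three estimates will be derived by integrating the generalized Bochner formula (Theorem~\ref{thm_bochner_pathspace}) against the Wiener measure $\Gamma_x$, which annihilates the martingale term $\langle \nabla_t^\p|\nabla_s^\p F_t|^2,dW_t\rangle$.  The proofs of (H2) and (H3) will proceed by integrating the (H1)-identity in the $s$-parameter and combining with an Itô-isometry identity for the martingale representation.

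For (H1), I integrate the Bochner formula in $t$ from $0$ to $T$.  Since $\nabla_s^\p F_t=0$ for $t<s$, the $\delta_s(t)$ term contributes $|\nabla_s^\p F_s|^2$ at the jump, and since $F$ is $\Sigma_T$-measurable one has $\nabla_s^\p F_T=\nabla_s^\p F$.  Taking $\int_{PM}\cdot\,d\Gamma_x$ yields the identity
\begin{align*}
\int_{PM}|\nabla_s^\p F_s|^2\, d\Gamma_x+\int_s^T\!\!\int_{PM}|\nabla_t^\p\nabla_s^\p F_t|^2\, d\Gamma_x\, dt = \int_{PM}|\nabla_s^\p F|^2\, d\Gamma_x - \int_s^T\!\!\int_{PM}\Ric_t(\nabla_t^\p F_t,\nabla_s^\p F_t)\, d\Gamma_x\, dt.
\end{align*}
Next I bound the Ricci term by $\kappa|\nabla_t^\p F_t||\nabla_s^\p F_t|$ via the assumption $|\Ric|\leq\kappa$, and control the right-hand side using the martingale-projection bound $|\nabla_t^\p F_t|\leq E_t[|\nabla_t^\p F|]$ together with the gradient estimate (G2) from Theorem~\ref{thm_improved_grad_ricci} applied to $|\nabla_s^\p F_t|$.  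The resulting weighted double integral is then reorganized by a Grönwall/integrating-factor argument with weight $e^{\tfrac{\kappa}{2}(T-t)}$ to produce the factor $e^{\tfrac{\kappa}{2}(T-s)}$ on the right-hand side.  I expect this last reorganization to be the main technical step.

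For (H2), I integrate (H1) over $s\in[0,T]$.  On the left, Itô's isometry applied to the martingale representation $F-\int_{PM}F\,d\Gamma_x=\int_0^T\langle\nabla_t^\p F_t,dW_t\rangle$ of Theorem~\ref{t:martingale_representation_intro} gives
\begin{align*}
\int_0^T\!\!\int_{PM}|\nabla_s^\p F_s|^2\, d\Gamma_x\, ds = \int_{PM}\Big(F-\int_{PM}F\, d\Gamma_x\Big)^2\, d\Gamma_x,
\end{align*}
and since $\nabla_t^\p\nabla_s^\p F_t$ vanishes for $t<s$, the Hessian integral over $\{0\leq s\leq t\leq T\}$ equals the one over the full square $[0,T]^2$.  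On the right-hand side, Fubini together with the elementary identity $\tfrac12(e^{\frac{\kappa}{2}(T-s)}+e^{\frac{\kappa}{2}(T+s)})=e^{\frac{\kappa}{2}T}\cosh(\tfrac{\kappa}{2}s)$ converts the weighted integral into the $\cosh$-form appearing in (H2).

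For (H3), I apply the same scheme to the positive martingale $G_t:=(F^2)_t=E_t[F^2]$.  Itô's formula for $G_t\ln G_t$, combined with $|\nabla_t^\p G_t|^2/G_t=G_t|\nabla_t^\p\ln G_t|^2$, yields the entropy-dissipation identity
\begin{align*}
\int_{PM}F^2\ln F^2\, d\Gamma_x-\int_{PM}F^2\, d\Gamma_x\cdot\ln\!\int_{PM}F^2\, d\Gamma_x = \tfrac12\int_0^T\!\!\int_{PM}(F^2)_t|\nabla_t^\p\ln(F^2)_t|^2\, d\Gamma_x\, dt.
\end{align*}
The proof then proceeds as in (H2), with the Bochner quantity $G_t|\nabla_s^\p\ln G_t|^2$ replacing $|\nabla_s^\p F_t|^2$.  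The main obstacle is to derive a ``log-Bochner'' evolution for $G_t|\nabla_s^\p\ln G_t|^2$ in which the Hessian term is precisely $G_t|\nabla_t^\p\nabla_s^\p\ln G_t|^2$; this requires a careful Itô computation in which the extra terms produced by the logarithmic nonlinearity cancel against quadratic-variation corrections.  Once this identity is established, the Fubini/Grönwall manipulation of (H2) carries over mutatis mutandis and produces (H3).
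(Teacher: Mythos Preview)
Your overall architecture for (H1) and (H2) is exactly the paper's: integrate the Bochner identity, take expectation, bound the Ricci cross-term using the gradient estimate (G2), then integrate (H1) in $s$ and invoke the It\^o isometry to produce the variance and the $\cosh$-weight. One point is wrong, however: the ``martingale-projection bound'' $|\nabla_t^\p F_t|\leq E_t[|\nabla_t^\p F|]$ that you invoke is \emph{false} on a manifold with nonzero Ricci. By Theorem~\ref{thm_evol_par_grad} the process $\nabla_t^\p F_\cdot$ has a Ricci drift, so $\nabla_t^\p F_t\neq E_t[\nabla_t^\p F]$ in general; Jensen cannot be applied directly. The paper handles the cross-term by $\kappa|\nabla_s^\p F_t||\nabla_t^\p F_t|\leq \tfrac{\kappa}{2}(|\nabla_s^\p F_t|^2+|\nabla_t^\p F_t|^2)$ and then applies (G2) to \emph{both} squares (the case $s=t$ of (G2) handles the second one). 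After that there is no Gr\"onwall step; the exponential prefactor comes out of a direct change-of-order-of-integration computation (equation~\eqref{eq_comb4} in the paper). Your proposed integrating-factor argument would likely reproduce the same constant but is not what the paper does.

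For (H3) your route differs from the paper's. You propose to derive a ``log-Bochner'' evolution for $G_t|\nabla_s^\p\ln G_t|^2=G_t^{-1}|\nabla_s^\p G_t|^2$ at fixed $s$, then integrate in $s$ as in (H2). The paper instead works directly with the $\H$-gradient: it uses Proposition~\ref{cor_logsob_ev}, which gives the evolution of $X_t=G_t^{-1}|\nabla^\H G_t|^2-2G_t\ln G_t$ already containing the full $s$-integral of the Hessian term $G_t|\nabla_t^\p\nabla_s^\p\ln G_t|^2$. This bypasses the $s$-by-$s$ computation you anticipate as ``the main obstacle''. The paper explicitly notes that your fixed-$s$ approach is a legitimate alternative (``we could proceed as in the proof of (H2) by first finding an evolution equation for $F_t^{-1}|\nabla_s F_t|^2$\ldots''), so either route works; the $\H$-gradient route is a bit shorter because the delicate It\^o cancellation you flag has already been packaged into Proposition~\ref{cor_logsob_ev}.
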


The estimates in Theorem \ref{thm_hess_est} can again be viewed as generalization for martingales on path space of some much more basic estimates for the  heat flow on $M$. For illustration, if $\kappa=0$ then the first estimate $(H1)$ combined with Doob's inequality for the submartingale $t\mapsto \abs{\nabla_s^\p F_t}$ gives the estimate
\begin{equation}
\sup_{t\geq 0} \int_{PM}\abs{\nabla_s^\p F_t}^2 \, d\Gamma_x +\int_0^\infty\!\!\!\int_{PM} \abs{\nabla_t^\p\nabla_s^\p F_t}^2 \, d\Gamma_x \, dt \leq 4 \int_{PM} \abs{\nabla_s^\p F}^2\, d\Gamma_x\, 
\end{equation}
for martingales on $PM$. This generalizes the classical $L^\infty H^1\cap L^2H^2$ estimate for the heat flow on $M$.\\

\noindent\textbf{New Proofs of the Characterizations of \cite{Naber_char}.}  In fact, although it will be apparent that the gradient and hessian estimates of the previous theorems generalize the estimates of \cite{Naber_char}, it is worth pointing out that the methods of this paper provide a new and streamlined proof of the characterizations of bounded Ricci curvature from \cite{Naber_char}:

\begin{theorem}[{Characterizations of bounded Ricci curvature \cite{Naber_char}}]\label{thm_char_ricci}
For a smooth complete Riemannian manifold $(M,g)$ the following are equivalent:
\begin{enumerate}[({R}1)]
\item The Ricci curvature satisfies the bound
\begin{equation}
-\kappa g\leq \Rc \leq \kappa g.
\end{equation}
\item For any $F\in L^2(PM)$ on total path space $PM$ we have the gradient estimate
\begin{equation}
\left|\nabla_x \int_{PM} F\, d\Gamma_x\right| \leq \int_{PM}\left(\abs{\nabla_0^\parallel F}+\int_0^\infty\frac{\kappa}{2}e^{\kappa s/2}\abs{\nabla_s^\parallel F}\, ds\right) \, d\Gamma_x.
\end{equation}
\item For any $F\in L^2(PM)$ on total path space $PM$ which is $\Sigma_T$-measurable we have the gradient estimate
\begin{equation}
\left|\nabla_x \int_{PM} F\, d\Gamma_x\right|^2 \leq e^{\tfrac{\kappa}{2}T} \int_{PM}\left(\abs{\nabla_0^\parallel F}^2+\int_0^T\frac{\kappa}{2}e^{\kappa s/2}\abs{\nabla_s^\parallel F}^2\, ds\right) \, d\Gamma_x.
\end{equation}

\item For any $F\in L^2(PM,\Gamma_x)$ on based path space $P_xM$, the quadratic variation of its induced martingale satisfies the estimate
\begin{equation}
\left| \int_{PM} \sqrt{\frac{d[F,F]_t}{dt}}  \, d\Gamma_x\right| \leq \int_{PM}\left(\abs{\nabla_t^\parallel F}+\int_t^\infty\frac{\kappa}{2}e^{\kappa (s-t)/2}\abs{\nabla_s^\parallel F}\, ds\right) \, d\Gamma_x.
\end{equation}

\item For any $F\in L^2(PM,\Gamma_x)$ on based path space $P_xM$ which is $\Sigma_T$-measurable the quadratic variation of its induced martingale satisfies the estimate
\begin{equation}
\left| \int_{PM} \frac{d[F,F]_t}{dt} \, d\Gamma_x\right| \leq e^{\tfrac{\kappa}{2}(T-t)} \int_{PM}\left(\abs{\nabla_t^\parallel F}^2+\int_t^\infty\frac{\kappa}{2}e^{\kappa (s-t)/2}\abs{\nabla_s^\parallel F}^2\, ds\right) \, d\Gamma_x.
\end{equation}

\item For any $F\in L^2(PM,\Gamma_x)$ on based path space $P_xM$ which is $\Sigma_T$-measurable, the twisted Ornstein-Uhlenbeck operator satisfies the spectral gap estimate
\begin{equation}
\int_{P_xM} \abs{F_{t_1}-F_{t_0}}^2  \, d\Gamma_x \leq e^{\tfrac{\kappa}{2}(T-t_0)}\int_{P_xM}\langle F, \mathcal{L}_{t_0,\kappa}^{t_1} F\rangle \, d\Gamma_x.
\end{equation}
\item For any $F\in L^2(PM,\Gamma_x)$ on based path space $P_xM$ which is $\Sigma_T$-measurable, the twisted Ornstein-Uhlenbeck operator satisfies the log-Sobolev inequality
\begin{equation}
\int_{P_xM} \abs{F^2}_{t_1}\log \abs{F^2}_{t_1}  \, d\Gamma_x - \int_{P_xM} \abs{F^2}_{t_0}\log \abs{F^2}_{t_0}  \, d\Gamma_x \leq 2e^{\tfrac{\kappa}{2}(T-t_0)}\int_{P_xM}\langle F, \mathcal{L}_{t_0,\kappa}^{t_1} F\rangle \, d\Gamma_x.
\end{equation}
\end{enumerate}
\end{theorem}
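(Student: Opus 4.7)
The plan is to use Theorem \ref{thm_new_char} as the central pivot and treat the martingale Bochner inequality (C1) (or its weakened form (C3)) as the engine from which all the estimates in (R2)--(R7) are derived, and to which they can all be reduced via specialization to one-point cylinder functions. Thus the overall structure is (R1)$\Leftrightarrow$(C1)--(C5)$\Rightarrow$(R2)--(R7), with the converse directions (R2)--(R7)$\Rightarrow$(R1) obtained by inserting one- and two-time cylinder test functions.

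For the forward direction I would argue as follows. The implications (R1)$\Rightarrow$(R4), (R5) follow directly by combining Theorem \ref{thm_improved_grad_ricci}(G1), (G2) with the identity $d[F,F]_t = |\nabla_t^\p F_t|^2\, dt$, which is immediate from the martingale representation theorem (Theorem \ref{t:martingale_representation_intro}) applied to $F_t=E_t[F]$, after specialising (G1), (G2) to $s=t$. The implications (R1)$\Rightarrow$(R2), (R3) then follow from (R4), (R5) by evaluating at $t=0$ together with the integration-by-parts identity
\begin{equation*}
\nabla_x \int_{PM} F\, d\Gamma_x = \int_{PM} \nabla_0^\p F \, d\Gamma_x,
\end{equation*}
which encodes the commutation of the spatial gradient at the base point with the Wiener expectation. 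For the spectral gap (R6) and log-Sobolev (R7) estimates I would apply Ito's formula to $F_t^2$ and $(F^2)_t \log (F^2)_t$ respectively; combining the resulting evolutions with the Hessian estimates of Theorem \ref{thm_hess_est}, and then integrating $t$ from $t_0$ to $t_1$ against the weight $e^{\kappa(T-t)/2}$, produces exactly the sharp constants, where the structure of the twisted Ornstein--Uhlenbeck operator $\mathcal{L}_{t_0,\kappa}^{t_1}$ is precisely what absorbs the weights $e^{\kappa(T-t)/2}$ and $\cosh(\kappa s/2)$ that emerge from the Gronwall step.

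For the converse direction, the standard strategy is to apply each of (R2)--(R7) to simple one-point cylinder functions $F(\gamma)=f(\gamma(T))$, where the computation \eqref{e:gen_boch_2_class_boch} reduces $\nabla_s^\p F_t$ to $\nabla H_{T-t}f(\gamma(t))$ for $s\le t$ and to $0$ for $s>t$; this recovers the classical sharp gradient, spectral gap, or log-Sobolev estimate for the heat flow on $M$, and hence a lower Ricci bound $\Ric\ge -\kappa g$. To recover the upper bound $\Ric\le \kappa g$, one tests on finite families of two-time cylinder functions whose parallel gradients at distinct times $s\ne t$ are arranged to interact with opposite signs, as indicated at the end of the discussion of the generalized Bochner inequality; the saturation case of each estimate then forces $\Ric\le \kappa g$ via a first-variation argument at a conveniently chosen point.

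The main obstacle will be the clean derivation of the sharp twisted log-Sobolev inequality (R7). Unlike in the classical Bakry--Emery setup on $M$, on path space the exponential weight is nontrivially coupled to the second parameter $s$ through the Dirac $\delta_s(t)$ term and the double integral structure appearing in Theorem \ref{thm_hess_est}(H3), so the Gronwall argument must be carried out carefully along the diagonal $\{s=t\}$ while keeping track of the boundary contributions produced by the Dirac mass. Once this bookkeeping is performed, the remaining implications reduce to routine manipulations of conditional expectations and Ito-type identities, and the theorem falls out as a corollary of the Bochner, gradient, and Hessian estimates established earlier in the paper.
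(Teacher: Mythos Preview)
Your overall architecture matches the paper's: derive (R2)--(R7) from the martingale Bochner and gradient/Hessian machinery (Theorems \ref{thm_new_char}, \ref{thm_improved_grad_ricci}, \ref{thm_hess_est}), and close the loop with one- and two-point cylinder test functions for the lower and upper Ricci bounds respectively. The specializations $s=t$ for (R4),(R5) and $s=t=0$ for (R2),(R3), together with $d[F,F]_t=|\nabla_t^\p F_t|^2\,dt$, are exactly what the paper does.

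Two points deserve correction. First, the ``integration-by-parts identity''
\[
\nabla_x \int_{PM} F\, d\Gamma_x = \int_{PM} \nabla_0^\p F \, d\Gamma_x
\]
is \emph{false} unless $\Ric\equiv 0$; for $F(\gamma)=f(\gamma(t))$ the right side is $E_x[P_t\nabla f(X_t)]$, which differs from $\nabla H_t f(x)$ by the Bismut/Ricci correction. What you actually need, and what the paper uses, is simply the identification $\nabla_0^\p F_0=\nabla_x E_x[F]$, valid because $F_0=E_0[F]$ depends only on $\gamma(0)$. Your route (R4),(R5)$\Rightarrow$(R2),(R3) at $t=0$ works once you drop the wrong identity and use this identification instead.

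Second, for (R6) and especially (R7) the paper does not integrate against an extra factor $e^{\kappa(T-t)/2}$ in a Gronwall step; the exponential weights are already produced by (G2) and (H1), and one merely integrates (H1) (resp.\ Proposition \ref{cor_logsob_ev}) over $t_0\le s\le t_1$ and drops the Hessian term. In particular, It\^o on $(F^2)_t\log(F^2)_t$ alone is not enough for (R7): the relevant evolution is that of $G_t^{-1}|\nabla^{\H}G_t|^2-2G_t\ln G_t$ with $G=F^2$ (Proposition \ref{cor_logsob_ev}), whose $F_t^{-1}|\nabla^{\H}F_t|^2$ term is what cancels the entropy production and isolates the Hessian and Ricci contributions. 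Your sketch can be repaired by invoking that proposition in place of the bare It\^o computation, after which the argument is identical to the paper's.
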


In the statement of (R6) and (R7) the twisted Ornstein-Uhlenbeck operator $\mathcal{L}_{t_0,\kappa}^{t_1}$ $(0\leq t_0\leq t_1\leq T\leq \infty)$ is defined by
\begin{multline}
\int_{P_xM}\langle F, \mathcal{L}_{t_0,\kappa}^{t_1} F\rangle \, d\Gamma_x\\
=\int_{P_xM}\left(\int_{t_0}^{t_1} \cosh(\tfrac{\kappa}{2}(s-t_0))\abs{\nabla_s^\p F}^2 ds+\frac{1-e^{-\kappa(t_1-t_0)}}{2}\int_{t_1}^\infty e^{\tfrac{\kappa}{2}(s-t_1)}\abs{\nabla_s^\p F}^2 ds\right)\, d\Gamma_x\, .
\end{multline}
In particular, $\mathcal{L}_{0,0}^\infty=\nabla^{\H \ast} \nabla^\H$ is the classical Ornstein-Uhlenbeck operator given by the composition of the Malliavin gradient and its adjoint.

Our new proof of Theorem \ref{thm_char_ricci} is very short, and vividly illustrates the efficiency of our martingale calculus. For illustration, if $\Rc=0$ then by the generalized Bochner inequality (C1) the process $t\mapsto\abs{\nabla^\p_s F_t}^2$ is a submartingale. Thus, by the very definition of a submartingale we get
\begin{equation}
\abs{\nabla^\p_s F_t}^2\leq E_t\left[\abs{\nabla^\p_s F_T}^2\right]\qquad (t\leq T).
\end{equation}
Taking the limit $T\to \infty$, and specializing to $s=t=0$, this implies the infinite dimensional gradient estimate (R3):
\begin{equation}\
\left|\nabla_x \int_{PM} F\, d\Gamma_x\right|^2 \leq \int_{PM}\abs{\nabla_0^\parallel F}^2 \, d\Gamma_x.
\end{equation}
The other estimates and estimates for nonzero $\kappa$ can be proven with similar ease.\\

\begin{remark}
With minor adjustments the results and proofs in this paper generalize to the case of smooth metric measure spaces $(M,g,e^{-f}dV_g)$ with $\abs{\Ric+\nabla^2 f}\leq \kappa$. However, for clarity of exposition we focus on the case of Riemannian manifolds with bounded Ricci.
\end{remark}

\begin{remark}
The methods introduced in the present paper can also be adapted for the time-dependent setting, and thus provide a useful tool for the study of Ricci flow using the framework from \cite{HaslhoferNaber_flow}. This will be discussed elsewhere.
\end{remark}

This article is organized as follows: In Section \ref{sec_prelim}, we discuss some preliminaries from stochastic analysis on manifolds. In Section \ref{sec_mart}, we discuss our interpretation of the martingale representation theorem (Theorem \ref{t:martingale_representation_intro}) and some of its consequences. In Section \ref{sec_ev_on_pathspace}, we derive all the relevant evolution equations on path space, in particular the evolution equation for the parallel gradient of martingales (Theorem \ref{thm_ev_par_intro}) and the generalized Bochner formula (Theorem \ref{thm_bochner_pathspace}). In Section \ref{sec_app1}, we discuss the four applications of our calculus on path space, i.e. we prove Theorem \ref{thm_new_char}, Theorem \ref{thm_improved_grad_ricci}, Theorem \ref{thm_hess_est} and Theorem \ref{thm_char_ricci}.

\section{Preliminaries}\label{sec_prelim}

\subsection{Frame bundle}
Given a complete Riemannian manifold $M$, let $\pi:FM\to M$ be the $O_n$-bundle of orthonormal frames. By definition, the fiber over a point $x\in M$ is given by the orthonormal maps $u:\mathbb{R}^n\to T_xM$. Thus, if $e_1,\ldots,e_n$ denotes the standard basis of $\dR^n$ then $ue_1,\ldots, ue_n$ is an orthonormal basis of $T_{x}M$, where $x=\pi(u)$.

A horizontal lift of a curve $x_t$ in $M$ is a curve $u_t$ in $FM$, with $\pi u_t=x_t$ and $\nabla_{\dot{x}_t}(u_t e_i)=0$ for $i=1,\ldots, n$. Once the initial point is specified, the horizontal lift exists and is unique. In particular, to each tangent vector $X\in T_{x}M$ we can associate a horizontal lift $X^\ast\in T_uFM$, for $u\in \pi^{-1}(x)$.

Given a representation $\rho$ of $O_n$ on a vector space $V$ and an equivariant map from $FM$ to $V$, we get a section of the associated vector bundle $FM\times_\rho V$, and vice versa. For example, a function $f:M\to \dR$ corresponds to the invariant function $\tilde{f}=f\pi:FM\to \dR$, and a vector field $Y\in\Gamma(TM)$ corresponds to a function $\tilde{Y}: F\to\dR^n$ via $\tilde{Y}(u)=u^{-1}Y_{\pi u}$, which is equivariant in the sense that $\tilde{Y}(ug)=g^{-1}\tilde{Y}(u)$. Covariant derivatives of tensors $T\in \Gamma(T^p_q M)$ can be expressed as horizontal derivatives of these equivariant functions, i.e.
\begin{equation}\label{hor_der_equiv}
\widetilde{\nabla_XT}=X^\ast \tilde{T},
\end{equation}
see e.g. \cite{KobNom}. On the frame bundle we have $n$ fundamental horizontal vector fields, defined by $H_i(u)=(ue_i)^\ast$. Using the fundamental horizontal vector fields, we can define the horizontal Laplacian $\Lap_{H}=\sum_{i=1}^n H_i^2$. As a consequence of \eqref{hor_der_equiv} we have
\begin{equation}
\widetilde{\Lap T}=\Lap_H \tilde{T},
\end{equation}
where $\Lap=g^{ij}\nabla_i\nabla_j$ is the Laplace-Beltrami operator on $M$, see e.g. \cite{KobNom}.

Besides the fundamental horizontal vector fields, we also have $n(n-1)/2$ fundamental vertical vector fields, defined by $V_{ij}(u)=\tfrac{d}{dt}|_{t=0} ue^{tA_{ij}}$, where $A_{ij}\in\mathfrak{o}(n)$ is the matrix whose $(i,j)$-th entry is $-1$, whose $(j,i)$-th entry is $+1$, and all whose other entries are zero. The following proposition gives the commutators between the fundamental vector fields.

\begin{proposition}[{see e.g. \cite{Hamilton_Harnack}}]\label{lemma_commutators}
The fundamental vector fields on the frame bundle satisfy the following commutator identities:
\begin{align}
[H_i, H_j]&=\tfrac12 R_{ijkl}V_{kl}\, ,\label{comm_hh}\\
[V_{ij}, H_k]&=\delta_{ik}H_{j}-\delta_{jk}H_i\, ,\\
[V_{ij},  V_{kl}]&=\delta_{ik}V_{jl}+\delta_{jl}V_{ik}-\delta_{il}V_{jk}-\delta_{jk}V_{il}\, ,
\end{align}
where $R_{ijkl}=\Rm(ue_i,ue_j,ue_k,ue_l)$.
\end{proposition}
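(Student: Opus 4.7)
The plan is to prove the three commutator identities in order of increasing geometric content: first $[V,V]$, then $[V,H]$, then $[H,H]$. All three are local statements about the principal $O_n$-bundle $\pi:FM\to M$ equipped with its Levi-Civita connection. The first two are essentially Lie-theoretic, while the third is where curvature enters, via Cartan's structure equations.

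For $[V_{ij},V_{kl}]$, the observation is that since $O_n$ acts on $FM$ from the right, the assignment $A\mapsto A^\ast$, with $A^\ast(u):=\tfrac{d}{dt}\big|_{t=0}ue^{tA}$, from $\mathfrak{o}(n)$ to $\mathfrak{X}(FM)$ is a Lie algebra homomorphism (the standard sign for a right action). Since $V_{ij}=A_{ij}^\ast$, the identity reduces to the matrix commutator $[A_{ij},A_{kl}]$ in $\mathfrak{o}(n)$, which one computes directly from the entries $(A_{ab})_{pq}=\delta_{aq}\delta_{bp}-\delta_{ap}\delta_{bq}$ and matches against the claimed right-hand side.

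For $[V_{ij},H_k]$, the key input is the $O_n$-equivariance of the horizontal distribution: from $\pi_\ast H_k(u)=ue_k=(ug)(g^{-1}e_k)$ and the fact that horizontal lifts are preserved by the right action, one gets $(R_g)_\ast H_k(u)=\sum_l(g^{-1})_{lk}H_l(ug)$. Because the flow of $V_{ij}$ is $R_{e^{tA_{ij}}}$, differentiating this rule at $t=0$ using $\mathcal{L}_X Y=\tfrac{d}{dt}\big|_{t=0}(\phi_{-t}^X)_\ast Y(\phi_t^X u)$ yields $[V_{ij},H_k](u)=\sum_l(A_{ij})_{lk}H_l(u)=\delta_{ik}H_j(u)-\delta_{jk}H_i(u)$, as claimed.

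For $[H_i,H_j]$, the argument splits into two steps. First, $[H_i,H_j]$ is purely vertical: denoting the canonical soldering $1$-forms on $FM$ by $\theta^a$, so that $\theta^a(H_k)=\delta^a_k$ and $\theta^a(V_{\cdot\cdot})=0$, Cartan's first structure equation $d\theta^a=-\omega^a_b\wedge\theta^b$ (torsion-free Levi-Civita) evaluated on $(H_i,H_j)$ reduces, upon using $\omega(H_\bullet)=0$, to $\theta^a([H_i,H_j])=0$, hence $\pi_\ast[H_i,H_j]=0$. Second, to identify the vertical part, write $[H_i,H_j]=\tfrac{1}{2}c^{kl}_{ij}V_{kl}$ with $c^{kl}_{ij}$ antisymmetric in $(k,l)$; Cartan's second structure equation $d\omega^a_b+\omega^a_c\wedge\omega^c_b=\Omega^a_b$ applied to $(H_i,H_j)$ gives $-\omega^a_b([H_i,H_j])=\Omega^a_b(H_i,H_j)=R^a{}_{bij}$, and reading off $c^{kl}_{ij}$ from the known values of $\omega^a_b(V_{kl})$ produces $c^{kl}_{ij}=R_{ijkl}$. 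The main obstacle is purely bookkeeping: the sign conventions for $\omega$, $\Omega$, and $R_{ijkl}$ must be kept mutually consistent so that the final coefficient matches the normalization $R_{ijkl}=\Rm(ue_i,ue_j,ue_k,ue_l)$ used in the statement; once a single convention is fixed, the identification is immediate.
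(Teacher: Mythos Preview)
The paper does not actually prove this proposition; it is stated with a citation (``see e.g.\ \cite{Hamilton_Harnack}'') and no argument is supplied. Your proposal is the standard principal-bundle argument and is correct: the $[V,V]$ identity is the image of the matrix bracket $[A_{ij},A_{kl}]$ under the Lie algebra homomorphism $A\mapsto A^\ast$ for the right $O_n$-action; the $[V,H]$ identity follows from $O_n$-equivariance of the horizontal distribution by differentiating $(R_g)_\ast H_k=\sum_l (g^{-1})_{lk}H_l$ along the flow $g=e^{tA_{ij}}$; and the $[H,H]$ identity follows from the first and second Cartan structure equations, the first giving verticality of $[H_i,H_j]$ and the second identifying its $\mathfrak{o}(n)$-component with the curvature form evaluated on $(H_i,H_j)$. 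Your caveat about sign conventions is well placed: with the paper's definition of $A_{ij}$ (entry $-1$ at $(i,j)$, entry $+1$ at $(j,i)$) and $R_{ijkl}=\Rm(ue_i,ue_j,ue_k,ue_l)$, the matching of $-\omega([H_i,H_j])=\Omega(H_i,H_j)$ to $\tfrac12 R_{ijkl}V_{kl}$ does hinge on one fixed convention for $\Rm$ and $\Omega$; once that is pinned down the coefficient $\tfrac12$ simply reflects the double counting in the sum over all $(k,l)$ rather than $k<l$.
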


Using Lemma \ref{lemma_commutators} we can easily compute all other relevant commutators, in particular we obtain:

\begin{corollary}\label{cor_commutators}
If $\tilde{f}:FM\to \mathbb{R}$ is an $O_n$-invariant function, then
\begin{align}
H_i H_j \tilde{f}-H_j H_i\tilde{f}&=0\, ,\\
\Lap_H H_i \tilde{f}-H_i\Lap_H \tilde{f}&=R_{ij} H_j\tilde{f},
\end{align}
where $R_{ij}=\Rc(ue_i,ue_j)$.
\end{corollary}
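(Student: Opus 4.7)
The plan is to derive both identities purely from the commutator relations in Proposition \ref{lemma_commutators}, using the key input that $V_{kl}\tilde f=0$ for any $O_n$-invariant function $\tilde f$ (this is just the definition of invariance under the right $O_n$-action, since $V_{kl}$ is the generator of the rotation by $A_{kl}$).

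For the first identity, I would simply apply \eqref{comm_hh} to $\tilde f$:
\begin{equation*}
(H_iH_j-H_jH_i)\tilde f=[H_i,H_j]\tilde f=\tfrac12 R_{ijkl}V_{kl}\tilde f=0,
\end{equation*}
where I used that $V_{kl}\tilde f\equiv 0$ for an invariant function. Note that although $R_{ijkl}$ is a nonconstant function on $FM$, nothing about this step requires it to be constant, because $V_{kl}$ acts on $\tilde f$ alone inside the bracket.

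For the second identity, I would expand the commutator $[\Delta_H,H_i]$ using the Leibniz rule for commutators:
\begin{equation*}
[\Delta_H,H_i]=\sum_j[H_j^2,H_i]=\sum_j\bigl(H_j[H_j,H_i]+[H_j,H_i]H_j\bigr).
\end{equation*}
Applied to the invariant function $\tilde f$, the first summand $H_j[H_j,H_i]\tilde f=\tfrac12H_j(R_{jikl}V_{kl}\tilde f)$ vanishes because $V_{kl}\tilde f=0$. For the second summand I rewrite
\begin{equation*}
[H_j,H_i](H_j\tilde f)=\tfrac12 R_{jikl}\,V_{kl}(H_j\tilde f)
\end{equation*}
and move $V_{kl}$ past $H_j$ using $[V_{kl},H_j]=\delta_{kj}H_l-\delta_{lj}H_k$, noting once more that $V_{kl}\tilde f=0$, to obtain $V_{kl}(H_j\tilde f)=\delta_{kj}H_l\tilde f-\delta_{lj}H_k\tilde f$. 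Substituting back gives
\begin{equation*}
[\Delta_H,H_i]\tilde f=\tfrac12\sum_{j,l}R_{jijl}H_l\tilde f-\tfrac12\sum_{j,k}R_{jikj}H_k\tilde f.
\end{equation*}

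The final step is to recognize the Ricci tensor in both sums by the symmetries of $R_{ijkl}$. Using $R_{jijl}=R_{ijlj}$ (two antisymmetries) we get $\sum_j R_{jijl}=\Rc_{il}$, and using $R_{jikj}=-R_{ijkj}$ we get $\sum_j R_{jikj}=-\Rc_{ik}$. Adding the two contributions yields $[\Delta_H,H_i]\tilde f=R_{ij}H_j\tilde f$, which is the desired identity. The only mildly subtle point is the index bookkeeping together with the use of the Riemann symmetries to collapse the contraction into $\Rc$; everything else is mechanical application of the commutators of Proposition \ref{lemma_commutators} combined with $O_n$-invariance.
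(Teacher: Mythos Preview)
Your proof is correct and follows essentially the same route as the paper. The only cosmetic difference is in the second identity: the paper first invokes the already-proved commutativity $H_jH_i\tilde f=H_iH_j\tilde f$ to rewrite $\Delta_H H_i\tilde f-H_i\Delta_H\tilde f=H_jH_iH_j\tilde f-H_iH_jH_j\tilde f=[H_j,H_i]H_j\tilde f$, whereas you use the Leibniz expansion $[H_j^2,H_i]=H_j[H_j,H_i]+[H_j,H_i]H_j$ and kill the first term via $V_{kl}\tilde f=0$; from that point on the two computations are identical.
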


\begin{proof}
Since $\tilde{f}$ constant along fibres, we see that $V_{kl}\tilde{f}=0$, and the first formula follows from \eqref{comm_hh}. Using this, we compute
\begin{equation}
\Lap_H H_i\tilde{f}-H_i\Lap_H \tilde{f}=H_j H_i H_j \tilde{f}-H_i H_jH_j \tilde{f}= \tfrac{1}{2}R_{jikl}V_{kl} H_j \tilde{f}= \tfrac{1}{2}R_{jikl}(\delta_{kj}H_l-\delta_{lj}H_k) \tilde{f}=R_{ij}H_j\tilde{f}\, ,
\end{equation}
which proves the second formula.
\end{proof}

\subsection{Brownian motion and stochastic parallel transport} 

Brownian motion and stochastic parallel transport is most conveniently described via the Eells-Elworthy-Malliavin formalism. We give a quick summary here, and refer to \cite{Hsu} for a more gentle introduction.

Let $(P_0\mathbb{R}^n,\Sigma,\Gamma_0)$ be the the space of continuous curves in $\mathbb{R}^n$ starting at the origin, equipped with the Euclidean Wiener measure, and denote the Brownian motion map by $W_t:P_0\mathbb{R}^n\to \mathbb{R}^n$.

Given a point $x\in M$ and a frame $u$ above $x$, consider the following SDE on the frame bundle:
\begin{equation}
dU_t=\sum_{i=1}^n H_i(U_t) \circ dW_t^i,\qquad U_0=u.
\end{equation}
Then $X_t=\pi(U_t)$ is Brownian motion on $M$ starting at $x$, and $P_t=U_0U_t^{-1}: T_{X_t}M\to T_{x}M$ is a family of isometries, called stochastic parallel transport.
On the frame bundle, the Ito formula takes the form
\begin{equation}\label{ito_formula}
d\tilde{f}(U_t)=H_i \tilde{f} dW_t^i+\tfrac12 \Lap_H \tilde{f} dt.
\end{equation}
Note that the solution of the SDE defines maps $U:P_0\mathbb{R}^n\to P_u FM$ and $X:P_0\mathbb{R}^n\to P_x M$. The Wiener measure $\Gamma_x$ on $P_x M$ is then given as pushforward $\Gamma_x=X_\ast\Gamma_0$. More explicitly, the Wiener measure $\Gamma_x$ can be characterized as follows: If $e_{t_1,\ldots,t_N}:P_x M\to M^N$ denotes the evaluation map at the times $0\leq t_1 < \ldots < t_N$, then the pushforward of $\Gamma_x$ is given by the following product of heat kernel measures:
\begin{equation}
(e_{t_1,\ldots,t_N})_\ast d\Gamma_x (y_1,\ldots,y_N)=\rho_{t_1}(x,dy_1)\rho_{t_2-t_1}(y_1,dy_2)\cdots\rho_{t_N-t_{N-1}}(y_{N-1},dy_N).
\end{equation}
When there is no risk of confusion, we denote the $\sigma$-algebra $\Sigma$ on $P_0\mathbb{R}^n$ and $X_\ast \Sigma$ on $P_xM$ by the same letter, and we identify the isomorphic probability spaces $(P_0\mathbb{R}^n,\Sigma,\Gamma_0)$  and $(P_xM,\Sigma,\Gamma_x)$.  The $\sigma$-algebra comes with a natural filtration $\Sigma_t$ generated by the evaluation maps $e_{t'}$ with $t'\leq t$.

\begin{remark}
All our estimates imply a lower bound for the Ricci curvature. Thus, in our setting the assumption of metric completeness is equivalent to stochastic completeness.
\end{remark}

\subsection{Conditional expectation and martingales}

Let $F\in L^1(P_xM,\Gamma_x)$. We write $E[F]=\int_{P_xM} F d\Gamma_x$ for the expectation value of $F$. More generally, given $t\geq 0$ we write $F_t = E_t[F]\equiv E[F\, |\, \Sigma_t]$ for the conditional expectation of $F$ given the $\sigma$-algebra $\Sigma_t$, i.e. $F_t$ is the unique $\Sigma_t$-measurable function such that $\int_\Omega F_t \, d\Gamma_x = \int_\Omega F\,  d\Gamma_x$ for all $\Sigma_t$-measurable sets $\Omega$. Explicitly, $F_t$ is given by the formula
\begin{equation}
F_t(\gamma)=\int_{P_{\gamma(t)}M} F(\gamma|_{[0,t]}\ast \gamma')\, d\Gamma_{\gamma(t)}(\gamma'),
\end{equation}
where the integration is over all curves $\gamma'$ based at $\gamma(t)$, and where $\ast$ denotes concatenation.

We recall from the introduction, that a martingale on $P_xM$ is a $\Sigma_t$-adapted integrable stochastic process $F_t:P_xM\to\mathbb{R}$ such that
\begin{equation}
F_{t_1} = E_{t_1}[F_{t_2}] \qquad (t_1\leq t_2).
\end{equation}
Martingales on $P_xM$ are always continuous in time (possibly after modifying them on a set of measure zero, which we always tacitly assume).

By the definition of the conditional expectation, $F_t=E_t[F]$ is a martingale. Conversely, given any martingale $F_t: P_xM\to \mathbb{R}$ which is uniformly integrable, i.e. such that
\begin{equation}\label{uniform_integrability}
\limsup_{K\to \infty}\sup_{t>0}\int_{\{\gamma\in P_x M : \abs{F_t}(\gamma)> K\} } \abs{F_t}(\gamma)\, dP_x(\gamma) = 0,
\end{equation}
then by Doob's martingale convergence theorem we can take a limit $F_t\to F\in L^1(P_xM,\Gamma_x)$ as $t\to\infty$. In particular, each uniformly integrable martingale $F_t: P_xM\to \mathbb{R}$ can be represented in the form $F_t=E_t[F]$ for some $F\in L^1(P_xM,\Gamma_x)$.

\begin{example}
Let $f:M\to \dR$ be a smooth function with compact support and let $T>0$. Consider the function $F:P_xM\to \mathbb{R}$ defined by $F(\gamma)=f(\gamma(T))$. Then the induced martingale $F_t=E_t[F]$ is given by
\begin{equation}
F_t(\gamma)=
\begin{cases}
    H_{T-t}f(\gamma(t)),& \text{if } t< T\\
   f(\gamma(T)),              & \text{if } t\geq T,
\end{cases}
\end{equation}
where $H$ denotes the heat flow.
\end{example}

\begin{example}\label{ex_stopping_time}
Let $F_t:P_xM\to \dR$ be a martingale, and let $\tau:P_xM\to \dR^+$ be a stopping time, i.e. $\{\tau\leq t\}$ is $\Sigma_t$-measurable for each $t$.  Then the process $F_{t\wedge \tau}$ is a martingale. 
\end{example}

\begin{example}
Let $F_t:P_xM\to \dR$ be an $L^2$-martingale, and let $[F,F]_t$ be its quadratic variation.  Then the process $F^2_t-[F,F]_t$ is a martingale.
\end{example}

\subsection{Cylinder functions and approximation arguments}

A cylinder function $F:PM\to \dR$ is a function of the form
\begin{align}
F(\gamma) = f(\gamma(t_1),\ldots,\gamma(t_N))\, ,
\end{align}
where $f:M^N\to \dR$ is a smooth function with compact support and $0\leq t_1<\cdots<t_N<\infty$ is a partition. Cylinder functions are dense in $L^p$. Thus, to prove theorems on path space it often suffices to carry out the computations for cylinder functions, and then appeal to density. More precisely, the martingales in Theorem \ref{thm_new_char}, Theorem \ref{thm_hess_est} and Theorem \ref{thm_char_ricci} are of the form $F_t=E_t[F]$ where $F$ is in $L^2$, and thus can be approximated by cylinder functions (if $F$ is not in the domain of $\nabla_s^\p$, then $\abs{\nabla_s^\p F}=+\infty$ by convention, and any estimate where the right hand side is $+\infty$ holds trivially). In the theorems and propositions concerning evolution equations or evolution inequalities the martingale $F_t$ under consideration might violate the uniform integrability condition \eqref{uniform_integrability}. Nevertheless, for any $T<\infty$ we can still approximate $F_T$ by cylinder functions. We can then use this approximation by cylinder functions to prove the evolution formula on $[0,T]$, and then conclude that the evolution formulas hold in general, since $T$ was arbitrary.


\subsection{Parallel Gradient and Malliavin gradient}\label{ss:prelim:gradients}

Let $F:P_xM\to \dR$ be a cylinder function and let $s\geq 0$.  For $s\geq 0$ the $s$-parallel gradients are the one parameter family of gradients $\nabla_s^\p F: P_xM\to T_xM$ introduced in \cite{Naber_char} and defined by the formula
\begin{equation}
\langle \nabla_s^\p F(\gamma), Y\rangle = D_{Y_s}F(\gamma),
\end{equation}
where $Y_s(t)$ is the vector field along $\gamma(t)$ given by
\begin{align}
Y_s(t) \equiv 
\begin{cases}
0& \text{ if } t< s\, ,\notag\\
P^{-1}_tY& \text{ if } t\geq s\, ,
\end{cases}
\end{align}
and $P_t=P_t(\gamma):T_{\gamma(t)}M\to T_{x}M$ denotes stochastic parallel transport. That is, $\nabla_s^\p F$ is determined by variations of $F$ along the finite dimensional collection of curves which are parallel past the time $s$.  The $s$-parallel gradient is well defined for cylinder functions and may be extended as a closed linear operator on $L^2$ with the cylinder functions being a dense subset of the domain, see \cite{Naber_char}.  Explicitly, if $F(\gamma)=f(\gamma(t_1),\ldots, \gamma(t_N))$ is a cylinder function, then its $s$-parallel gradient can be computed via the formula
\begin{equation}
 \nabla_s^\p F(\gamma)=\sum_{t_\alpha\geq s} P_{t_\alpha}\nabla^{(\alpha)} f(\gamma(t_1),\ldots, \gamma(t_N)),
\end{equation}
where $\nabla^{(\alpha)}$ denotes the derivative with respect to the $\alpha$-th entry, and $P_{t_\alpha}=P_{t_\alpha}(\gamma):T_{\gamma(t_\alpha)}M\to T_{x}M$.\\

\begin{remark}
Note that $t\to \nabla_s^\p F_t$ is left continuous and thus a predictable process.
\end{remark}
\vspace{.5 cm}

In another direction, let $\H$ be the Hilbert-space of $H^1$-curves $\{y_t\}_{t\geq 0}$ in $T_xM$ with $y_0 = 0$, equipped with the inner product
\begin{equation}
\langle y,z\rangle_{\H}=\int_0^\infty \langle \tfrac{d}{dt}{y}_t,\tfrac{d}{dt}{z}_t\rangle\, dt.
\end{equation}
If $F:P_xM\to \dR$ is a cylinder function then its Malliavin gradient is the unique almost everywhere defined function $\nabla^\H F : P_xM \to \H$, such that
\begin{equation}
D_Y F(\gamma) = \langle \nabla^\H F(\gamma),v\rangle_{\H}
\end{equation}
for every $v\in\mathcal{H}$ for almost every Brownian curve $\gamma$, where $Y= \{P_t^{-1}y_t \}_{t\geq 0}$.

Explicitly, if $F(\gamma)=f(\gamma(t_1),\ldots,\gamma(t_N))$, then
\begin{equation}
\sum_{\alpha=1}^N \langle y_{t_\alpha},P_{t_\alpha}\nabla^{(\alpha)}f\rangle = D_Y F = \langle \nabla^\H F,y\rangle_{\H}=\int_0^\infty \langle \tfrac{d}{dt} \nabla^\H F, \tfrac{d}{dt} y_t\rangle\, dt.
\end{equation}
It follows that
\begin{equation}
\tfrac{d}{dt} \nabla^\H F = \sum_{\alpha=1}^N 1_{\{t\leq t_\alpha\}}P_{t_\alpha}\nabla^{(\alpha)}f=\nabla_t^\p F,
\end{equation}
i.e. the parallel gradient is the derivative of the Malliavin gradient. In particular, we have the formula
\begin{equation}\label{eq_mall_grad}
\abs{\nabla^\H F}_{\H}^2=\int_0^\infty \abs{\nabla_s^\p F}^2\, ds.
\end{equation}

As above, having defined the Malliavin gradient in the special case of cylinder functions, it can be extended to closed unbounded operator on $L^2$, with the cylinder functions as a dense subset of its domain.



\section{A reinterpretation of martingale formulas}\label{sec_mart}

The formulas of this section are all classical in nature, but rewritten in a way which will be particularly natural in our context and will reinforce the interpretation of martingales as a form of (backwards) heat flow.  These interpretations will play an important role in subsequent sections.\\

\subsection{Martingale representation theorem}\label{ss:martingale_representation}

Let us begin with the martingale representation formula, which tells us that every martingale $F_t$ is the Ito integral of some stochastic process with respect to Brownian motion.  More precisely,
\begin{align}
dF_t =\, <X_t, dW_t>\, ,
\end{align}
for some predictable stochastic process $X_t$.  There have been several results, in particular the Clark-Ocone theorem \cite{Fang,Hsu}, which give methods for computing $X_t$.  However, our first goal in this section is to see how to compute $X_t$ directly from $F_t$ itself.  In this way we will be able to view the martingale equation as an evolution equation on path space.

\begin{theorem}[Martingale representation theorem]\label{t:martingale_representation}
If $F_t$ is a martingale on $P_xM$, and $F_t$ is in the domain of $\nabla_t^\p$, then $F_t$ solves the stochastic differential equation
\begin{align}
dF_t = \langle \nabla_t^\parallel F_t, dW_t\rangle. 
\end{align}
\end{theorem}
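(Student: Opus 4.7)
The plan is to reduce to cylinder functions via the density/closedness discussion in the preliminaries, and then verify the SDE directly by applying Ito's formula on the frame bundle. Both sides of the desired identity $dF_t = \langle \nabla_t^\p F_t, dW_t\rangle$ depend continuously on $F$ in an appropriate $L^2$-sense (the Ito integral on the right through its integrated quadratic variation $\int_0^T |\nabla_t^\p F_t|^2\, dt$), so it suffices to treat cylinder functions $F(\gamma) = f(\gamma(t_1),\ldots,\gamma(t_N))$ and then pass to the limit.

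For such an $F$ and for $t_{k-1}\leq t<t_k$, the Markov property and the heat-kernel characterization of $\Gamma_x$ give $F_t(\gamma) = g_t(\gamma(t_1),\ldots,\gamma(t_{k-1}),\gamma(t))$, where $g_t$ is obtained by integrating $f$ in the last $N-k+1$ variables against the heat kernels over the intervals $[t,t_k]$, $[t_k,t_{k+1}],\ldots,[t_{N-1},t_N]$. In particular, $F_t$ is again a cylinder function, with the time-$t$ slot as its last coordinate, and the explicit formula for the $s$-parallel gradient given in the preliminaries collapses to
$$\nabla_t^\p F_t(\gamma) = P_t\, \nabla_y g_t\bigl(\gamma(t_1),\ldots,\gamma(t_{k-1}),\gamma(t)\bigr),$$
where $\nabla_y$ denotes the gradient in the last argument.

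I would then apply the frame-bundle Ito formula \eqref{ito_formula} to the invariant lift $\tilde g_t$ of $g_t$ in its last variable (the earlier variables being frozen, $\Sigma_t$-measurable data). Two observations combine to give the SDE. First, the drift term $(\partial_t \tilde g_t + \tfrac12 \Lap_H \tilde g_t)(U_t)$ vanishes: the $t$-derivative of $g_t$ hits only the leading heat kernel $\rho_{t_k-t}$, and Kolmogorov's equation $\partial_t\rho_{t_k-t} = \tfrac12 \Delta_y \rho_{t_k-t}$ cancels the horizontal Laplacian exactly. Second, by the identity $\widetilde{\nabla_X T} = X^\ast \tilde T$ for scalar functions, $H_i \tilde g_t(U_t) = \langle \nabla_y g_t, U_t e_i\rangle$, which under $U_0:\mathbb{R}^n \cong T_xM$ is exactly the $i$-th component of $\nabla_t^\p F_t = U_0 U_t^{-1}\nabla_y g_t$. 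This gives $dF_t = \langle \nabla_t^\p F_t, dW_t\rangle$ for all cylinder $F$.

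The main point I expect to require care is the passage beyond cylinder functions: a general martingale in the domain of $\nabla_t^\p$ need not satisfy the uniform integrability condition \eqref{uniform_integrability}, so the clean representation $F_t = E_t[F]$ with $F\in L^2$ is not directly available. I would handle this using the truncation remark from the preliminaries: for any $T<\infty$, approximate $F_T \in L^2$ by cylinder functions to deduce the SDE on $[0,T]$, and then let $T\to \infty$. One also needs to verify that in this approximation the predictable processes $\nabla_t^\p F_t^n$ converge to $\nabla_t^\p F_t$ in the Ito-integrable sense on each compact time interval, which follows from the closedness of $\nabla_t^\p$ on $L^2$.
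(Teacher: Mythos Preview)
Your proposal is correct and follows essentially the same route as the paper: reduce to cylinder functions, express $F_t$ via heat-kernel integration in the last variable, lift to the frame bundle and apply the Ito formula there so that the backward heat equation $(\partial_t+\tfrac12\Delta)g_t=0$ kills the drift, then identify $H_i\tilde g_t(U_t)$ with the $i$-th component of $\nabla_t^\p F_t$ and project down. Your closing discussion of the approximation step (fixing $T$, approximating $F_T$, using closedness of $\nabla_t^\p$) matches the paper's density argument; the only quibble is a sign slip in your stated Kolmogorov equation---it should read $\partial_t\rho_{t_k-t}=-\tfrac12\Delta_y\rho_{t_k-t}$---but your conclusion that the drift cancels is correct.
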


\begin{proof}
Let $f:M^N\to \mathbb{R}$ be a smooth function with compact support. Let $F:PM\to \mathbb{R}$ be the function
\begin{equation}
F(X)=f(X_{t_1},\ldots,X_{t_N}).
\end{equation}
Consider the lift $\tilde{f}:FM^N\to \mathbb{R}$, $\tilde{f}(u_1,\ldots,u_N)=f(\pi u_1,\ldots \pi u_N)$. Let $PFM$ be the path space of the frame bundle and consider $\tilde{F}: PFM\to \mathbb{R}$, $\tilde{F}(U)=\tilde{f}(U_{t_1},\ldots, U_{t_N})$.

Let $F_t=E_t[F]$ be the martingale induced by $F$ and assume $t\in(t_\beta,t_{\beta+1})$. Then
\begin{align}\label{e:martingale_cylinder}
F_t(X)&=\int_{M^{N-\beta}}f(X_{t_1},\ldots, X_{t_\beta},y_{\beta+1},\ldots, y_N) \rho_{t_{\beta+1}-t}(X_t,dy_{\beta+1})\cdots \rho_{t_N-t_{N-1}}(y_{N-1},dy_N)\nonumber\\
&=: f_t(X_{t_1},\ldots, X_{t_\beta},X_t).
\end{align}
Note that the function $(t,x)\mapsto f_t(x_1,\ldots,x_\beta,x)$ is uniformly Lipschitz in the time variable and satisfies
\begin{equation}
(\partial_t+\tfrac12 \Lap^{(\beta+1)})f_t=0,
\end{equation}
where the Laplacian acts on the last variable. The lift of $F_t$ to the frame bundle is given by
\begin{equation}
\tilde{F}_t(U)=\tilde{f}_t(U_{t_1},\ldots,U_{t_\beta},U_t).
\end{equation}
Using the Ito formula \eqref{ito_formula} we compute
\begin{equation}
d\tilde{F}_t(U)= \langle H^{(\beta+1)}\tilde{f}_t,dW_t\rangle+(\partial_t+\tfrac12 \Lap_H^{(\beta+1)})\tilde{f}_tdt=\langle H^{(\beta+1)}\tilde{f}_t,dW_t\rangle,
\end{equation}
where the horizontal derivative and the horizontal Laplacian act on the last variable.
Projecting down to $M$ this implies the martingale representation formula:
\begin{equation}
dF_t=\langle \nabla_t^\p F_t,dW_t\rangle.
\end{equation}
Indeed, the projected equation can be obtained by computing
\begin{equation}
\langle\nabla_t^\p F_t,dW_t\rangle\equiv(U_0^{-1}\nabla_t^\p F_t)^i dW^i_t=(U_t^{-1}\nabla f_t|_{X_t})^i dW^i_t,
\end{equation}
and
\begin{equation}
(U_t^{-1}\nabla f_t|_{X_t})^i =\langle \nabla f_t|_{X_t},U_te_i\rangle_{T_{X_t}M}=(U_t e_i)f_t|_{X_t}=H_i \tilde{f}_t|_{U_t}.
\end{equation}
This proves the martingale representation theorem for cylinder functions, and thus by density for all functions in the domain of the parallel gradient.
\end{proof}

An interesting corollary is the following:

\begin{corollary}\label{cor_quadr_var}
Let $F_t$ be an $L^2$-martingale on $P_xM$. Then the quadratic variation $[F,F]_t$ of $F_t$ satisfies
\begin{align}
d[F,F]_t = |\nabla_t^\p F_t|^2 dt\, .
\end{align}
\end{corollary}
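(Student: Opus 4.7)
The plan is to obtain this as an immediate consequence of the martingale representation theorem (Theorem \ref{t:martingale_representation}) combined with the standard computation of the quadratic variation of an Ito integral. Since $F_t$ is assumed to be an $L^2$-martingale, by a density/approximation argument using cylinder functions (as discussed in the subsection on cylinder functions and approximation) we may assume $F_t$ lies in the domain of $\nabla_t^\p$, so that Theorem \ref{t:martingale_representation} applies and gives
\begin{equation*}
dF_t = \langle \nabla_t^\parallel F_t, dW_t\rangle = \sum_{i=1}^n (\nabla_t^\p F_t)^i\, dW_t^i,
\end{equation*}
where $(\nabla_t^\p F_t)^i$ denotes the $i$-th component of $U_0^{-1}\nabla_t^\p F_t\in\mathbb{R}^n$ with respect to the standard basis.

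Next, I would invoke the standard formula for the quadratic variation of a stochastic integral with respect to Brownian motion. Since $d[W^i,W^j]_t = \delta_{ij}\, dt$, the Ito isometry at the level of quadratic variations yields
\begin{equation*}
d[F,F]_t = \sum_{i,j=1}^n (\nabla_t^\p F_t)^i (\nabla_t^\p F_t)^j\, d[W^i,W^j]_t = \sum_{i=1}^n |(\nabla_t^\p F_t)^i|^2\, dt.
\end{equation*}
Since $U_0$ is an isometry $\mathbb{R}^n \to T_x M$, the Euclidean norm of the components equals $|\nabla_t^\p F_t|^2$, which is the claim.

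There is no real obstacle here beyond the predictability/integrability bookkeeping that makes the quadratic variation formula rigorous. The only point worth noting is that the integrability of $\int_0^t |\nabla_s^\p F_s|^2\, ds$, needed to ensure $F_t$ is a genuine $L^2$ Ito integral (rather than merely a local martingale), follows from the $L^2$ assumption on $F_t$: one first establishes the identity for cylinder $F$, where $\nabla_s^\p F_s$ is clearly square-integrable on bounded time intervals, and then passes to the limit using the $L^2$ isometry between $F$ and its Malliavin gradient recorded in \eqref{eq_mall_grad}, together with the density of cylinder functions in the domain of $\nabla^\H$.
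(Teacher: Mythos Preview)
Your proposal is correct and is exactly the intended argument: the paper states this result as an immediate corollary of the martingale representation theorem without giving any further proof, and your derivation via $dF_t=\langle\nabla_t^\p F_t,dW_t\rangle$ together with $d[W^i,W^j]_t=\delta_{ij}\,dt$ is the natural way to unpack it. If anything, your discussion of the approximation and integrability bookkeeping is more careful than what the paper spells out.
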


An equally interesting corollary is the following:

\begin{corollary}
Let $F_t$ be an Ito process on $P_xM$, such that $F_t$ is in the domain of $\nabla_t^\p$. Then the quadratic variation term $[F,W]_t$ is given by
\begin{align}
d[F,W]_t =U_0^{-1}\nabla_t^\p F_t\,dt\, .
\end{align}
\end{corollary}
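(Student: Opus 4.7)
The plan is to decompose the Ito process $F_t$ into its local-martingale and bounded-variation parts, apply Theorem \ref{t:martingale_representation} to the martingale part, and verify that the drift has vanishing $t$-parallel gradient.

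Concretely, write $dF_t = \langle \xi_t, dW_t\rangle + \eta_t\,dt$ for predictable processes $\xi_t \in \mathbb{R}^n$ and $\eta_t \in \mathbb{R}$. Setting $A_t := \int_0^t \eta_s\,ds$, the process $A$ is of bounded variation, so $[A,W]_t \equiv 0$ and standard Ito calculus gives $d[F,W]_t = \xi_t\,dt$. The process $M_t := F_t - A_t$ is then a local martingale with $dM_t = \langle \xi_t, dW_t\rangle$, and Theorem \ref{t:martingale_representation} applied to $M_t$ (with localization if needed) yields $dM_t = \langle \nabla_t^\p M_t, dW_t\rangle$. Comparing the two expressions for $dM_t$ forces $\xi_t = U_0^{-1} \nabla_t^\p M_t$, where $U_0^{-1}$ identifies $T_x M$ with $\mathbb{R}^n$.

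It remains to check that $\nabla_t^\p A_t = 0$ almost surely, which would give $\nabla_t^\p M_t = \nabla_t^\p F_t$ and complete the proof. Since $\eta_s$ is predictable, $A_t$ depends only on the Brownian path restricted to $[0,t)$. I would approximate $A_t$ by left-endpoint Riemann sums $\sum_{k=0}^{N-1} \eta_{s_k}(s_{k+1}-s_k)$ over partitions $0 = s_0 < \cdots < s_N = t$; each summand $\eta_{s_k}$ is $\Sigma_{s_k}$-measurable with $s_k < t$, so the explicit cylinder formula for $\nabla_t^\p$ makes its $t$-parallel gradient vanish. Passing to the limit via closability of $\nabla_t^\p$ on $L^2$, together with the standing hypothesis that $F_t$ lies in the domain of $\nabla_t^\p$, then yields $\nabla_t^\p A_t = 0$ and therefore $d[F,W]_t = U_0^{-1} \nabla_t^\p F_t\,dt$.

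The main technical subtlety will be justifying the Riemann-sum approximation cleanly: one likely needs an additional cylinder-function approximation of the predictable integrand $\eta_s$, together with a uniform-in-$N$ bound in the graph norm of $\nabla_t^\p$, in order to interchange the parallel gradient with the limit. Once this closability-and-density step is in place, everything else is an immediate consequence of the martingale representation theorem and the standard quadratic-covariation identity between Ito integrals and Brownian motion.
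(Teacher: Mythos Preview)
Your proposal is correct and supplies the natural argument that the paper leaves implicit (the corollary is stated there without proof, as an immediate consequence of the martingale representation theorem). The key step you identify---that the bounded-variation part $A_t=\int_0^t\eta_s\,ds$ satisfies $\nabla_t^\p A_t=0$ because it is $\Sigma_{t^-}$-measurable while the $t$-parallel variation vector field vanishes on $[0,t)$---is exactly what is needed to pass from $\xi_t=U_0^{-1}\nabla_t^\p M_t$ to $\xi_t=U_0^{-1}\nabla_t^\p F_t$, and your Riemann-sum approximation (with closability of $\nabla_t^\p$) is a reasonable way to make this rigorous.
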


Most importantly, the representation formula of Theorem \ref{t:martingale_representation} leads to the following corollary, which can be viewed as a representation theorem for submartingales.

\begin{corollary}[Submartingale representation theorem]\label{t:submartingale_representation}
Let $F_t$ be an Ito process on $P_xM$, such that $F_t$ is in the domain of $\nabla_t^\p$. Then $F_t$ is a submartingale if and only if it satisfies the stochastic differential inequality
\begin{align}
dF_t \geq \langle \nabla_t^\p F_t, dW_t\rangle. 
\end{align}
\end{corollary}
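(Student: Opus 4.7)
The approach is to decompose the given Ito process into its martingale and finite-variation parts and then to apply Theorem \ref{t:martingale_representation} to the martingale part. Write the Ito decomposition as $dF_t = A_t\,dt + \langle X_t, dW_t\rangle$, with $A_t$ and $X_t$ predictable. By uniqueness of the Doob-Meyer decomposition for continuous semimartingales, $F_t$ is a (local) submartingale if and only if the drift $A_t$ is nonnegative $dt\otimes d\Gamma_x$-almost everywhere.

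The key step is then to identify $X_t$ with $\nabla_t^\p F_t$. I would introduce the martingale part $M_t := F_t - \int_0^t A_s\,ds$, a continuous local martingale satisfying $dM_t = \langle X_t,dW_t\rangle$. Under the standing hypothesis that $F_t$ lies in the domain of $\nabla_t^\p$, the drift integral is sufficiently regular that $M_t$ does as well. Applying Theorem \ref{t:martingale_representation} to $M_t$ then gives $dM_t = \langle \nabla_t^\p M_t, dW_t\rangle$, and uniqueness of the Ito integrand yields $X_t = \nabla_t^\p M_t$. It remains to verify the claim $\nabla_t^\p \int_0^t A_s\,ds = 0$, which would upgrade this to $X_t = \nabla_t^\p F_t$.

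The crucial observation here is that if a random variable $G$ is $\Sigma_\tau$-measurable for some $\tau<t$, then $\nabla_t^\p G = 0$: the $t$-parallel gradient corresponds to infinitesimal variations of the Brownian curve supported on $[t,\infty)$, and such variations leave any $\Sigma_\tau$-measurable functional with $\tau<t$ unchanged, as one sees immediately from the cylinder-function formula $\nabla_s^\p F = \sum_{t_\alpha \geq s} P_{t_\alpha}\nabla^{(\alpha)}f$ recalled in Section \ref{ss:prelim:gradients}. Now $\int_0^t A_s\,ds$ is the $L^2$-limit as $\epsilon \downarrow 0$ of the $\Sigma_{t-\epsilon}$-measurable truncations $\int_0^{t-\epsilon} A_s\,ds$, whose $t$-parallel gradients vanish by the preceding observation. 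Passing to the limit using closedness of $\nabla_t^\p$ on $L^2$ yields the claim.

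Combining these pieces we obtain $dF_t = A_t\,dt + \langle \nabla_t^\p F_t, dW_t\rangle$, after which the stated equivalence is immediate: $A_t \geq 0$ is the same as $dF_t \geq \langle \nabla_t^\p F_t, dW_t\rangle$, which by the first paragraph is the same as $F_t$ being a submartingale. The main technical obstacle is the vanishing of $\nabla_t^\p$ on the drift integral, since the integration endpoint $s=t$ prevents $\int_0^t A_s\,ds$ from being strictly $\Sigma_{<t}$-measurable; the truncation argument together with closedness of $\nabla_t^\p$ handles this cleanly and makes essential use of the predictability of $A_t$.
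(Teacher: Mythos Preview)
Your approach is correct and is precisely the natural argument the paper has in mind: the corollary is stated without proof, and the chain of corollaries preceding it (in particular the identification $d[F,W]_t = U_0^{-1}\nabla_t^\p F_t\,dt$ for Ito processes) rests on exactly the decomposition and identification you carry out. The only cosmetic point is ordering: establish $\nabla_t^\p\!\int_0^t A_s\,ds = 0$ \emph{first} (your truncation-plus-closedness argument does this cleanly), so that $M_t = F_t - \int_0^t A_s\,ds$ is manifestly in the domain of $\nabla_t^\p$ with $\nabla_t^\p M_t = \nabla_t^\p F_t$, and only then invoke Theorem~\ref{t:martingale_representation}.
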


Though basic, the above formula will be important to us as it will allow us to easily identify and exploit submartingales from their evolution equations in a manner mimicking the finite dimensional context.

\subsection{Ito formula and Ito isometry}

From the point of view adopted in Theorem \ref{t:martingale_representation}, we may rewrite the Ito formula in the following manner:

\begin{theorem}[Ito formula]\label{t:ito_formula}
Let $F_t$ be a martingale on $P_xM$, such that $F_t$ is in the domain of $\nabla_t^\p$, and let $\phi:\dR\to\dR$ be a $C^2$-function. Then $\phi(F_t)$ solves the stochastic differential equation
\begin{align}\label{e:Ito_formula}
d\phi(F_t) = \langle \nabla_t^\p\, \phi(F_t), dW_t\rangle + \frac{1}{2}\phi''(F_t)|\nabla_t^\p F_t|^2\, dt. 
\end{align}
\end{theorem}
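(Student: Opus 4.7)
The plan is to reduce this to the classical Ito formula combined with a chain rule for the parallel gradient. First, I would apply the usual real-variable Ito formula to the scalar semimartingale $F_t$: since $\phi \in C^2$, one has
\begin{equation*}
d\phi(F_t) = \phi'(F_t)\, dF_t + \tfrac{1}{2}\phi''(F_t)\, d[F,F]_t .
\end{equation*}
Next, I would substitute the two martingale identities we have already proved: by Theorem \ref{t:martingale_representation}, $dF_t = \langle \nabla_t^\p F_t, dW_t\rangle$, and by Corollary \ref{cor_quadr_var}, $d[F,F]_t = |\nabla_t^\p F_t|^2\, dt$. Plugging these in yields
\begin{equation*}
d\phi(F_t) = \phi'(F_t)\,\langle \nabla_t^\p F_t, dW_t\rangle + \tfrac{1}{2}\phi''(F_t)|\nabla_t^\p F_t|^2\, dt .
\end{equation*}

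The remaining task is to identify $\phi'(F_t)\,\nabla_t^\p F_t$ with $\nabla_t^\p \phi(F_t)$, i.e.\ a chain rule for the parallel gradient. Since $F_t$ is $\Sigma_t$-measurable, so is $\phi(F_t)$, and the $s$-parallel gradient for $s=t$ is computable by the directional-derivative formula defining $\nabla_t^\p$: differentiating $\phi(F_t)(\gamma)$ along a variation of $\gamma$ which is parallel past time $t$ gives $\phi'(F_t(\gamma))\cdot D_{Y_t}F_t(\gamma)$ by the usual one-variable chain rule. I would verify this first for cylinder martingales, where the explicit formula \eqref{e:martingale_cylinder} identifies $F_t$ with $f_t(\gamma(t_1),\ldots,\gamma(t_\beta),\gamma(t))$ and the parallel gradient is an explicit sum of covariant derivatives, making the chain rule immediate; then one extends it to $F_t$ in the domain of $\nabla_t^\p$ by the density and closability of the parallel gradient on $L^2$, using local boundedness of $\phi'$ on the essential range of $F_t$ (or truncation) to justify the limit.

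Having established $\nabla_t^\p \phi(F_t) = \phi'(F_t)\,\nabla_t^\p F_t$, the stochastic integral term rewrites as $\langle \nabla_t^\p \phi(F_t), dW_t\rangle$, which gives exactly \eqref{e:Ito_formula}.

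The main technical obstacle I anticipate is the domain issue for $\phi(F_t)$: one needs $\phi(F_t)$ to lie in the domain of $\nabla_t^\p$ so that the right-hand side makes sense, and one needs enough integrability of $\phi'(F_t)$ and $\phi''(F_t)|\nabla_t^\p F_t|^2$ to pass from the cylinder-function case to the general case. This is standard and can be handled by truncating $\phi$ to agree with a compactly supported $C^2$ function on large balls, applying the formula there, and using the approximation strategy already outlined in Section 2.4 of the paper (approximate $F_T$ by cylinder functions on each finite interval $[0,T]$ and then let $T\to\infty$) together with the continuity of $\phi$ and $\phi'$. No new ideas beyond the representation theorem, the quadratic variation identity, and the chain rule are needed.
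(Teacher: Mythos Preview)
Your proposal is correct and follows essentially the same route as the paper: apply the classical It\^o formula, substitute $dF_t=\langle\nabla_t^\p F_t,dW_t\rangle$ from Theorem~\ref{t:martingale_representation} and $d[F,F]_t=|\nabla_t^\p F_t|^2\,dt$ from Corollary~\ref{cor_quadr_var}, and then invoke the chain rule $\nabla_t^\p\phi(F_t)=\phi'(F_t)\nabla_t^\p F_t$. The paper's proof simply asserts this last identity in one line, while you spell out in more detail how to justify it via cylinder functions and density; this extra care is reasonable but not a different argument.
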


\begin{proof}
Using the standard Ito formula, Theorem \ref{t:martingale_representation} and Corollary \ref{cor_quadr_var} we compute
\begin{align}
d\phi(F_t)&=\phi'(F_t)\, dF_t + \tfrac{1}{2}\phi''(F_t)\, d[F,F]_t\\
&= \phi'(F_t) \langle \nabla_t^\p F_t, dW_t\rangle + \frac{1}{2}\phi''(F_t)\, |\nabla_t^\p F_t|^2 dt. 
\end{align}
Noticing also that $\nabla_t^\p \phi(F_t)= \phi'(F_t)\nabla_t^\p F_t$, this proves the assertion.
\end{proof}

\begin{remark}
Let us make the following comparison.  Assume $f_t:M\to \dR$ solves the backward heat equation $\partial_t f_t = -\frac{1}{2}\Delta f_t$ and that $\phi:\dR\to \dR$ is $C^2$-function. Then  $\phi(f_t)$ solves the equation
\begin{align}
\partial_t \phi(f_t) = -\frac{1}{2}\Delta \phi(f_t) + \frac{1}{2}\phi''(f_t)|\nabla f_t|^2\, .
\end{align}
\end{remark}
\begin{remark}
In particular, one can view \eqref{e:Ito_formula} as a generalization of Jensen's inequality for martingales on $P_xM$.  Indeed, if $\phi$ is a convex then combining \eqref{e:Ito_formula} with Corollary \ref{t:submartingale_representation} we have that $\phi(F_t)$ is a submartingale.
\end{remark}
\begin{remark}
More generally if $F_t, G_t$ are martingales and $\phi,\psi:\dR\to\dR$ are $C^2$-functions then
\begin{align}
d\Big(\phi(F_t)\psi(G_t)\Big) =& \langle \nabla_t^\p\Big(\phi(F)\psi(G)\Big),dW_t\rangle + \frac{1}{2}\phi''(F_t)|\nabla_t^\p F_t|^2\psi(G_t)\, dt + \frac{1}{2}\phi(F_t)\psi''(G_t)|\nabla_t^\p G_t|^2\, dt \notag\\
& + \langle \nabla_t^\p \phi(F_t), \nabla_t^\p \psi(G_t)\rangle\, dt\, .
\end{align}
\end{remark}

To finish this section, let us observe that from the point of view adapted in Theorem \ref{t:martingale_representation}, we may rewrite the Ito isometry in the following manner. 

\begin{theorem}[Ito isometry]\label{thm_ito_isom}
Let $F\in L^2(P_xM)$.  Then
\begin{align}
E\left[\int_0^\infty |\nabla_t^\p F_t|^2dt\right] = E\Big[(F-E[F])^2\Big]\, .
\end{align}
\end{theorem}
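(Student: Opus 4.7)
The plan is to deduce the isometry directly from the martingale representation theorem (Theorem~\ref{t:martingale_representation}) together with its quadratic variation consequence (Corollary~\ref{cor_quadr_var}). Form the induced martingale $F_t=E_t[F]$, which lies in $L^2(P_xM,\Gamma_x)$ because $F$ does. Since $\Sigma_0$ is the trivial $\sigma$-algebra we have $F_0=E[F]$, and since $\{F_t\}$ is $L^2$-bounded, Doob's $L^2$ martingale convergence theorem gives $F_t\to F$ in $L^2$ as $t\to\infty$. In particular,
$$E[(F-E[F])^2]=\lim_{t\to\infty}E[(F_t-F_0)^2]=\lim_{t\to\infty}\big(E[F_t^2]-E[F_0^2]\big).$$

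Next, by Theorem~\ref{t:martingale_representation} we have $dF_t=\langle\nabla_t^\p F_t,dW_t\rangle$, and by Corollary~\ref{cor_quadr_var} the quadratic variation of $F_t$ is $d[F,F]_t=|\nabla_t^\p F_t|^2\,dt$. Using the example stated just before the section on cylinder functions, namely that $F_t^2-[F,F]_t$ is a martingale whenever $F_t$ is an $L^2$-martingale, taking expectations of both sides gives
$$E[F_t^2]-E[F_0^2]=E\!\left[\int_0^t|\nabla_s^\p F_s|^2\,ds\right].$$
Letting $t\to\infty$ and combining with the $L^2$ convergence above, monotone convergence on the right yields
$$E[(F-E[F])^2]=E\!\left[\int_0^\infty|\nabla_s^\p F_s|^2\,ds\right],$$
which is exactly the asserted identity.

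The one subtlety is that the use of the martingale representation formula and of the quadratic variation formula requires $\nabla_t^\p F_t$ to define an element of $L^2(P_xM\times[0,\infty))$ (so that the stochastic integral is genuinely an Ito integral). I would dispose of this as in the approximation paragraph of the paper: first verify the identity for cylinder $F$, where $\nabla_t^\p F_t$ is explicit and compactly supported in $t$; then note that for cylinder approximants $F^{(k)}\to F$ in $L^2$, the conditional-expectation map is an $L^2$-contraction, and the already-established identity shows that $\{\nabla_t^\p F_t^{(k)}\}$ is Cauchy in $L^2(P_xM\times[0,\infty))$. The limit defines $\nabla_t^\p F_t$ in this space and the identity passes to the limit. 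This is the main (and only) technical point; everything else is a direct consequence of Theorem~\ref{t:martingale_representation} and Corollary~\ref{cor_quadr_var}.
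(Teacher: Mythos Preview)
Your proof is correct and follows essentially the same route as the paper's: both rest on the martingale representation theorem (Theorem~\ref{t:martingale_representation}) together with the identification $d[F,F]_t=|\nabla_t^\p F_t|^2\,dt$. The paper simply cites the classical Ito isometry $E\big[(\int_0^\infty\langle X_t,dW_t\rangle)^2\big]=E\big[\int_0^\infty |X_t|^2\,dt\big]$ and plugs in $X_t=\nabla_t^\p F_t$, whereas you unpack that isometry via the martingale $F_t^2-[F,F]_t$ and are more explicit about the passage $t\to\infty$ and the cylinder approximation; the underlying argument is the same.
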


\begin{proof}
Using the classical Ito isometry and Theorem \ref{t:martingale_representation} we compute
\begin{equation}
E\left[\int_0^\infty |\nabla_t^\p F_t|^2dt\right]= E\left[\left( \int_0^\infty \langle \nabla_t^\p F_t,dW_t\rangle \right)^2\right] 
 =E\left[\left( \int_0^\infty dF_t \right)^2\right] =E\left[\left(F-E[F] \right)^2\right] \, .
\end{equation}
This proves the assertion.
\end{proof}
\vspace{.5cm}

\section{Evolution equations on path space and Generalized Bochner Formula}\label{sec_ev_on_pathspace}

When doing analysis on $M$ one considers a solution $f_t$ of the heat flow, and then computes the evolution equations of quantities associated to $f_t$.  In this spirit, the goal of this section is to compute the evolution equations for various quantities associated to martingales on path space, such as its square, its parallel gradient, its Malliavin gradient, etc. In particular, we will prove our generalized Bochner formula. In this section we tacitly assume that the martingales are sufficiently regular, i.e. in the domains of the respective parallel gradients.  

\begin{proposition}\label{prop_zero_ev} 
If $F_t:P_xM\to \dR$ is a martingale on path space, then the following hold:
\begin{enumerate}[(1)]
\item $dF_t^2 = \langle \nabla_t^\p F_t^2,dW_t\rangle+|\nabla_t^\p F_t|^2 dt$,
\item $d|F_t| = \langle \nabla_t^\p |F_t|,dW_t\rangle+\abs{\nabla_t^\p F_t}^2 dL_t$, where $L_t=\lim_{\eps\to 0}\tfrac{1}{2\eps}\abs{\{s\in [0,t]\, |\,  F_t\in (-\eps,\eps)\}}$.
\end{enumerate}
\end{proposition}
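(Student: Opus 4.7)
For part~(1), the plan is to apply Theorem~\ref{t:ito_formula} directly with $\phi(x)=x^2$. Since $\phi''\equiv 2$, this immediately yields $dF_t^2 = \langle \nabla_t^\p F_t^2, dW_t\rangle + |\nabla_t^\p F_t|^2\,dt$ in a single line, and no further work is needed.

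For part~(2), the absolute value fails to be $C^2$ at the origin, so Theorem~\ref{t:ito_formula} does not apply directly and I would instead proceed by a mollification argument in the spirit of the classical Tanaka formula. Specifically, I would choose smooth convex approximations $\phi_n:\dR\to\dR$ with $\phi_n\to|\cdot|$ uniformly, $\phi_n'\to\mathrm{sgn}$ off the origin, and $\phi_n''$ an approximate identity for $2\delta_0$ (for concreteness $\phi_n''=\eps_n^{-1}\chi_{(-\eps_n,\eps_n)}$ with $\eps_n\downarrow 0$). Applying Theorem~\ref{t:ito_formula} to each $\phi_n$ gives
\begin{equation*}
d\phi_n(F_t) = \langle \nabla_t^\p\phi_n(F_t), dW_t\rangle + \tfrac{1}{2}\phi_n''(F_t)|\nabla_t^\p F_t|^2\,dt,
\end{equation*}
after which I would integrate on $[0,T]$ and send $n\to\infty$. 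The left-hand side tends to $|F_T|-|F_0|$; the stochastic integral converges in $L^2$ to $\int_0^T\langle\nabla_t^\p|F_t|,dW_t\rangle$ by Theorem~\ref{thm_ito_isom} combined with the off-zero identity $\nabla_t^\p|F_t|=\mathrm{sgn}(F_t)\nabla_t^\p F_t$; and the drift reduces, via Corollary~\ref{cor_quadr_var}, to the classical approximation $\tfrac{1}{2\eps_n}\int_0^T\chi_{\{|F_t|<\eps_n\}}\,d[F,F]_t$ of the Tanaka local time $L^0_T$ of $F$ at zero.

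The main obstacle is matching this Tanaka local time with the expression $\int_0^T|\nabla_t^\p F_t|^2\,dL_t$ appearing in the statement, given that the paper's $L_t$ is defined as the naive Lebesgue occupation density of $F$ at $0$ rather than a bracket-weighted quantity. My plan here is to invoke the occupation times formula for continuous semimartingales: for bounded Borel $g$, one has both $\int_0^T g(F_t)\,d[F,F]_t = \int_\dR g(a)\,L^a_T\,da$ and $\int_0^T g(F_t)\,dt = \int_\dR g(a)\,\ell^a_T\,da$, where $\ell^a_T$ is the Lebesgue-time density at level $a$ (so $\ell^0_T=L_T$ in the paper's notation). Substituting $d[F,F]_t=|\nabla_t^\p F_t|^2\,dt$ from Corollary~\ref{cor_quadr_var} and comparing the two formulas yields the Radon--Nikodym identification $dL^0_t = |\nabla_t^\p F_t|^2\,dL_t$, which together with the limiting Ito computation above gives the claimed formula.
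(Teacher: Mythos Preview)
Your approach matches the paper's: part~(1) via Theorem~\ref{t:ito_formula} with $\phi(x)=x^2$, and part~(2) by approximating $\abs{x}$ with $C^2$ functions---the paper uses $\phi_\eps(x)=(x^2/2\eps+\eps/2)1_{\abs{x}<\eps}+\abs{x}1_{\abs{x}\geq \eps}$, whose second derivative is precisely your $\eps^{-1}\chi_{(-\eps,\eps)}$---applying Theorem~\ref{t:ito_formula}, and sending $\eps\to 0$. The paper's treatment of part~(2) is terser than yours, simply writing ``taking the limit $\eps\to 0$'' without any of your occupation-times discussion.
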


\begin{proof}
By the martingale representation theorem (Theorem \ref{t:martingale_representation}) we have the evolution equation
\begin{equation}
dF_t  = \langle \nabla_t^\p F_t, dW_t\rangle. 
\end{equation}
We can thus apply the Ito formula (Theorem \ref{t:ito_formula}) with $\phi(x)=\abs{x}^2$ to obtain
\begin{equation}
dF_t^2 = \langle \nabla_t^\p F_t^2,dW_t\rangle+|\nabla_t^\p F_t|^2 dt.
\end{equation}
This proves (1). Similarly, (2) follows by approximating $\phi(x)=\abs{x}$ by the $C^2$-functions
\begin{equation}
\phi_\eps(x)=(x^2/2\eps+\eps/2)1_{\abs{x}<\eps}+\abs{x}1_{\abs{x}\geq \eps}\, ,
\end{equation}
applying the Ito formula (Theorem \ref{t:ito_formula}), and taking the limit $\eps\to 0$.
\end{proof}

Next, and most importantly, we compute the evolution equation for the parallel gradient of a martingale:

\begin{theorem}[Evolution of the parallel gradient]\label{thm_evol_par_grad}
If $F_t:P_xM\to \dR$ is a martingale on path space, and $s\geq 0$ is fixed, then $\nabla_s^\p F_t: P_xM\to T_xM$ satisfies the stochastic differential equation
\begin{align}\label{eq_nablasft}
d \nabla_s^\p F_t = \langle \nabla_t^\p\nabla_s^\p F_t,dW_t\rangle + \frac{1}{2}\Ric_t(\nabla_t^\p F_t)\, dt + \nabla_{s}^\p F_s\,\delta_{s}(t)dt\, .
\end{align}
\end{theorem}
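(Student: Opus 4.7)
The plan is to reduce to cylinder functions by density and then compute the evolution equation directly by Ito's formula on the frame bundle. Let $F(\gamma) = f(\gamma(t_1),\ldots,\gamma(t_N))$ be a cylinder function. For $t<s$ the function $F_t$ is $\Sigma_t$-measurable while every $\nabla_s^\p$-variation is supported on $[s,\infty)$, so $\nabla_s^\p F_t\equiv 0$ on $[0,s)$ and the equation is trivial there. The substantive content is therefore the continuous evolution on $(s,\infty)$ together with the initial jump at $t=s$.

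For $t\in(t_\beta,t_{\beta+1})$ with $t_\beta\geq s$, the explicit formula $F_t(\gamma)=f_t(\gamma(t_1),\ldots,\gamma(t_\beta),\gamma(t))$ from the proof of Theorem \ref{t:martingale_representation}, together with the formula for the parallel gradient on cylinder functions, yields
\begin{equation*}
\nabla_s^\p F_t(\gamma) = \sum_{k:\, s<t_k\leq t_\beta} P_{t_k}(\gamma)\nabla^{(k)}f_t + P_t(\gamma)\nabla^{(\text{last})}f_t\, .
\end{equation*}
Lifting to the frame bundle, set $G_t^i := (U_0^{-1}\nabla_s^\p F_t)^i$, which becomes a sum of horizontal derivatives $H_i^{(k)}\tilde{f}_t$ evaluated at $(U_{t_1},\ldots,U_{t_\beta},U_t)$. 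On this interval only the last argument is dynamic, so Ito's formula \eqref{ito_formula} applied in the last variable gives
\begin{equation*}
dG_t^i = H_j^{(\text{last})}G_t^i\,dW_t^j + \bigl(\partial_t+\tfrac{1}{2}\Lap_H^{(\text{last})}\bigr)G_t^i\,dt\, .
\end{equation*}

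The drift computation is the heart of the proof. Since $H_i^{(k)}$ for $k\leq\beta$ acts on a different variable than $\partial_t$ and $\Lap_H^{(\text{last})}$, it commutes through; and since $\tilde{f}_t$ satisfies the backward heat equation $(\partial_t+\tfrac12\Lap_H^{(\text{last})})\tilde{f}_t=0$ in the last variable, every drift contribution cancels except the commutator in the last variable, which by Corollary \ref{cor_commutators} equals $\tfrac12[\Lap_H^{(\text{last})},H_i^{(\text{last})}]\tilde{f}_t = \tfrac12 R_{ij}H_j^{(\text{last})}\tilde{f}_t$, with $R_{ij}=\Ric(U_te_i,U_te_j)$. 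Unwinding $U_0^{-1}P_t = U_t^{-1}$ identifies this precisely with $\tfrac12(U_0^{-1}\Ric_t(\nabla_t^\p F_t))^i$, while the stochastic term $H_j^{(\text{last})}G_t^i\,dW_t^j$ matches $\langle\nabla_t^\p\nabla_s^\p F_t,dW_t\rangle$ component-wise. Running the same analysis on each interval $(t_\beta,t_{\beta+1})$ with $t_\beta\geq s$, and using continuity of $F_t$ at the partition points, assembles the continuous part of the SDE on $(s,\infty)$.

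The atom $\nabla_s^\p F_s\,\delta_s(t)\,dt$ records the jump of $\nabla_s^\p F_t$ at $t=s$, where the process transitions from $0$ on the left to $\nabla_s^\p F_s$ on the right; integrating the SDE against a test function supported across $s$ recovers this point mass as the unique consistent contribution to $d\nabla_s^\p F_t$ at $t=s$. Density of cylinder functions, as prepared in the approximation remarks of Section \ref{sec_prelim}, then extends the identity to general martingales in the domain of $\nabla_t^\p$. The main technical obstacle is the clean bookkeeping of the jump at $t=s$, together with the verification that the frame-bundle commutator reproduces precisely the operator $\Ric_t$ on $T_xM$ under the identification $U_0^{-1}P_t=U_t^{-1}$; the remaining steps are routine applications of Ito's formula and the backward heat equation.
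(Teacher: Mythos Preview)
Your proof is correct and follows essentially the same approach as the paper's: reduce to cylinder functions, lift to the frame bundle, apply the Ito formula in the dynamic (last) variable, and use the commutator identity of Corollary~\ref{cor_commutators} together with the backward heat equation $(\partial_t+\tfrac12\Lap_H^{(\beta+1)})\tilde f_t=0$ to isolate the Ricci drift, with the $\delta$-term recorded as the jump at $t=s$. The only cosmetic discrepancies are that the paper indexes the sum by $t_\alpha\geq s$ rather than $s<t_k$, and your restriction ``$t_\beta\geq s$'' formally omits the interval $(t_\beta,t_{\beta+1})$ containing $s$ (where the same computation applies with an empty sum); neither affects the argument.
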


\begin{remark}\label{rem_delta_notation}
Since $F_t$ is $\Sigma_t$-measurable, the parallel gradient $\nabla_s^\parallel F_t$ is identically zero  for $t<s$. For $t>s$, we will show that $d \nabla_s^\p F_t$ satisfies the evolution equation $d \nabla_s^\p F_t = \langle \nabla_t^\p\nabla_s^\p F_t,dW_t\rangle + \frac{1}{2}\Ric_t(\nabla_t^\p F_t)\, dt$. Thus, using the $\delta$-notation, the evolution equation for $\nabla_s^\p F_t$ can be summarized in form \eqref{eq_nablasft} which is valid for any $t$.
\end{remark}

\begin{proof}[{Proof of Theorem \ref{thm_evol_par_grad}}]
Fix $s$, and consider $t> s$.  We will use freely the notation developed in the preliminaries of Section \ref{sec_prelim}. Consider $F(\gamma)=f(\gamma(t_1),\ldots,\gamma(t_N))$ and observe since $s$ is fixed that $\nabla_s^\p F_t$ is well behaved over the evaulation times, hence it is enough for us to consider the evolution equation for $t\in(t_\beta,t_{\beta+1})$.  Using the notation of \eqref{e:martingale_cylinder} we have
\begin{equation}
\nabla_s^\p F_t = \sum_{t_\alpha\geq s}P_{t_\alpha} \nabla^{(\alpha)} f_t (X_{t_1},\ldots,X_{t_\beta},X_t)+P_t\nabla^{(\beta+1)}f_t (X_{t_1},\ldots,X_{t_\beta},X_t),
\end{equation}
where $\nabla^{(\alpha)}$ acts on the $\alpha$-th entry.  On the frame bundle, this is represented by the functions
\begin{equation}
G_i(U):=\sum_{t_\alpha\geq s}H_{i}^{(\alpha)} \tilde{f}_t (U_{t_1},\ldots, U_{t_\beta}, U_t)
+H_{i}^{(\beta+1)} \tilde{f}_t (U_{t_1},\ldots, U_{t_\beta}, U_t),
\end{equation}
where the horizontal vector field $H^{(\alpha)}$ acts on the $\alpha$-th entry. Using the Ito formula \eqref{ito_formula}, we compute
\begin{align}
d G_i(U)&=\sum_{t_\alpha\geq s}H_j^{(\beta+1)}H_{i}^{(\alpha)} \tilde{f}_t (U_{t_1},\ldots, U_{t_\beta}, U_t)dW_t^j+H_j^{(\beta+1)}H_{i}^{(\beta+1)} \tilde{f}_t (U_{t_1},\ldots, U_{t_\beta}, U_t)dW_t^j\\
&\quad +\sum_{t_\alpha\geq s} (\partial_t+\tfrac12\Lap_H^{(\beta+1)})H_{i}^{(\alpha)} \tilde{f}_t (U_{t_1},\ldots, U_{t_\beta}, U_t)dt
+(\partial_t+\tfrac12 \Lap_H^{(\beta+1)})H_{i}^{(\beta+1)} \tilde{f}_t (U_{t_1},\ldots, U_{t_\beta}, U_t)dt\nonumber\\
&=H_j^{(\beta+1)}\left(\sum_{t_\alpha\geq s}H_{i}^{(\alpha)} \tilde{f}_t (U_{t_1},\ldots, U_{t_\beta}, U_t)+H_{i}^{(\beta+1)} \tilde{f}_t (U_{t_1},\ldots, U_{t_\beta}, U_t)\right)dW_t^j\\
&\quad+\tfrac12 R_{ij}H_{j}^{(\beta+1)} \tilde{f}_t (U_{t_1},\ldots, U_{t_\beta}, U_t)dt\, ,\nonumber
\end{align}
where in the last step we used Corollary \ref{cor_commutators} and the equation $(\partial_t+\tfrac12 \Lap_H^{(\beta+1)})\tilde{f}_t=0$. Pushing down to $M$ this gives
\begin{equation}
d\nabla_s^\p F_t=\langle \nabla_t^\p\nabla_s^\p F_t,dW_t\rangle +\tfrac12 \Ric_t(\nabla_t^\p F_t) \, dt.
\end{equation}
Taking also into account Remark \ref{rem_delta_notation}, this proves the theorem.
\end{proof}
\vspace{.25cm}

Using Theorem \ref{thm_evol_par_grad} and the Ito formula, we can compute all other relevant evolution equations.

\begin{theorem}[Generalized Bochner Formula on $PM$]\label{cor_mart_first_order}
Let $F_t:P_xM\to \dR$ be a martingale.
\begin{enumerate}[(1)]
\item If $s\geq 0$ is fixed, then $\nabla_s^\p F_t: P_xM\to T_xM$ satisfies the following stochastic equations:
\begin{enumerate}
\item $d |\nabla_s^\p F_t|^2 = \langle \nabla^\p_t|\nabla_s^\p F_t|^2,dW_t\rangle + |\nabla^\p_t\nabla_s^\p F_t|^2dt +\Ric_t\big(\nabla_s^\p F_t,\nabla_t^\p F_t\big)\, dt + |\nabla_s^\p F_s|^2 \delta_{s}(t)dt$
\item $d |\nabla_s^\p F_t| = \langle \nabla_t^\p|\nabla_s^\p F_t|,dW_t\rangle + \frac{|\nabla_t^\p\nabla_s^\p F_t|^2-|\nabla^\p_t|\nabla^\p_s F||^2}{2|\nabla^\p_s F_t|}dt +\tfrac{1}{2|\nabla^\p_s F_t|} \Ric_t\big(\nabla^\p_s F_t,\nabla^\p_t F_t\big)\, dt + |\nabla^\p_s F_s| \delta_{s}(t)dt$
\end{enumerate}


\item The following stochastic equations hold:
\begin{enumerate}
\item $d |\nabla^\H F_t|^2 = \langle \nabla^\p_t |\nabla^{\H} F_t|^2,dW_t\rangle + \Big(\int_0^\infty \big(|\nabla_t^\p\nabla^\p_s F_t|^2 + \Ric_t(\nabla^\p_s F_t,\nabla^\p_t F_t)\big)\, ds+ |\nabla^\p_t F_t|^2 \Big)\,dt$
\item $d \int_0^\infty|\nabla^\p_s F_t| ds = \langle \nabla^\p_t \int_0^\infty |\nabla^\p_s F_t| ds,dW_t\rangle + \Big(\int_0^\infty \frac{|\nabla^\p_t\nabla^\p_s F_t|^2-|\nabla^\p_t|\nabla^\p_s F_t||^2 + \Ric_t(\nabla^\p_s F_t,\nabla^\p_t F_t)}{2\abs{\nabla^\p_s F_t}}\, ds+ |\nabla^\p_t F_t| \Big)\,dt$
\end{enumerate}
\end{enumerate}
\end{theorem}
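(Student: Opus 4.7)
The plan is to derive all four formulas as applications of the vector-valued Ito formula to the SDE for $X_t := \nabla_s^\p F_t$ established in Theorem \ref{thm_evol_par_grad}. Throughout I assume $F_t$ is sufficiently regular, which reduces to the cylinder-function case via the approximation scheme outlined in Section \ref{sec_prelim}.

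For (1)(a), I apply Ito's formula to $\phi(X) = |X|^2$. Writing Theorem \ref{thm_evol_par_grad} for $t > s$ as $dX_t = A_t\, dW_t + B_t\, dt$ with $A_t := \nabla_t^\p \nabla_s^\p F_t$ (viewed as a linear map) and $B_t := \tfrac{1}{2}\Ric_t(\nabla_t^\p F_t)$, the vector Ito formula gives
\begin{align*}
d|X_t|^2 = 2\langle X_t, A_t\, dW_t\rangle + 2\langle X_t, B_t\rangle\, dt + |A_t|^2\, dt.
\end{align*}
The martingale piece equals $\langle \nabla_t^\p |\nabla_s^\p F_t|^2, dW_t\rangle$ by the chain-rule identity $\nabla_t^\p |\nabla_s^\p F_t|^2 = 2(\nabla_t^\p \nabla_s^\p F_t)^* \nabla_s^\p F_t$, the trace term equals $|\nabla_t^\p \nabla_s^\p F_t|^2$, and symmetry of $\Ric_t$ turns $2\langle X_t, B_t\rangle$ into $\Ric_t(\nabla_s^\p F_t,\nabla_t^\p F_t)$. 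The $\delta_s(t)\, dt$ term in the SDE of Theorem \ref{thm_evol_par_grad} encodes the jump of $X_t$ from $0$ to $\nabla_s^\p F_s$ at $t=s$, and since $X_{s-}=0$ this jump contributes $|\Delta X_s|^2 = |\nabla_s^\p F_s|^2$ to $|X_t|^2$ with coefficient one (not two), which is exactly the $\delta$-term in the statement.

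For (1)(b), the same scheme applies with $\phi(x) = |x|$. Away from the origin, $\nabla\phi(x) = x/|x|$ and $\Hess\, \phi(x) = |x|^{-1}(I - |x|^{-2} xx^*)$, so the Ito correction is $\tfrac{1}{2|X_t|}(|A_t|^2 - |A_t^* X_t|^2/|X_t|^2)$. Using $\nabla_t^\p |\nabla_s^\p F_t| = (\nabla_t^\p\nabla_s^\p F_t)^* \nabla_s^\p F_t / |\nabla_s^\p F_t|$ this simplifies to $(|\nabla_t^\p \nabla_s^\p F_t|^2 - |\nabla_t^\p|\nabla_s^\p F_t||^2)/2|\nabla_s^\p F_t|$, as required. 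The non-smoothness of $\phi$ at the origin is dealt with by the mollification $\phi_\eps$ used in Proposition \ref{prop_zero_ev}(2), followed by passage to the limit; the jump at $t=s$ contributes $|\Delta X_s| = |\nabla_s^\p F_s|$.

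For (2)(a) and (2)(b), I integrate (1)(a) and (1)(b) over $s \in [0,\infty)$ and invoke a stochastic Fubini theorem to exchange the $ds$-integral with the stochastic integral against $dW_t$. The martingale parts collapse to $\langle \nabla_t^\p |\nabla^\H F_t|^2, dW_t\rangle$ and $\langle \nabla_t^\p \int_0^\infty |\nabla_s^\p F_t|\, ds, dW_t\rangle$ respectively, using the identity \eqref{eq_mall_grad}, while the delta contributions $\int_0^\infty |\nabla_s^\p F_s|^2 \delta_s(t)\, ds$ and $\int_0^\infty |\nabla_s^\p F_s| \delta_s(t)\, ds$ evaluate to $|\nabla_t^\p F_t|^2\, dt$ and $|\nabla_t^\p F_t|\, dt$ at the point $s = t$; the remaining drift terms simply stay under the $ds$-integral. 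The main technical obstacle will be justifying the stochastic Fubini exchange and, in (1)(b) and (2)(b), handling the possibly degenerate set $\{|\nabla_s^\p F_t| = 0\}$ where the mollifier limit is delicate; both points are dispatched by first working with cylinder functions --- where the relevant gradients are pointwise smooth and the degenerate set is Lebesgue-negligible in $t$ --- and then passing to the general case by the density argument recalled in Section \ref{sec_prelim}.
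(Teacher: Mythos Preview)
Your proposal is correct and follows essentially the same approach as the paper: both derive everything from Theorem \ref{thm_evol_par_grad} via the Ito formula, and obtain (2a), (2b) by integrating (1a), (1b) over $s$ using $|\nabla^\H F_t|^2=\int_0^\infty |\nabla_s^\p F_t|^2\,ds$. The only cosmetic difference is in (1b): you apply Ito directly to $\phi(x)=|x|$ with the explicit Hessian $|x|^{-1}(I-|x|^{-2}xx^*)$, whereas the paper instead writes $d|\nabla_s^\p F_t|^2 = 2|\nabla_s^\p F_t|\,d|\nabla_s^\p F_t| + d[|\nabla_s^\p F_t|,|\nabla_s^\p F_t|]$, solves for $d|\nabla_s^\p F_t|$ using (1a), and then identifies the quadratic variation from the $dW_t$ coefficient---but the two computations are equivalent.
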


\begin{proof}
We will use Theorem \ref{thm_evol_par_grad} and the Ito formula repeatedly.

Assume $t>s$. Note first that equation \eqref{eq_nablasft} implies that the quadratic variation $[\nabla_s^\p F_t,\nabla_s^\p F_t]$ satisfies
\begin{equation}\label{form_quad_var_par_grad}
d [\nabla_s^\p F_t,\nabla_s^\p F_t]=\abs{\nabla_t^\p\nabla_s^\p F_t}^2\, dt.
\end{equation}
Using this, the Ito formula, and equation \eqref{eq_nablasft} we compute
\begin{align}
d |\nabla_s^\p F_t|^2 &= 2\langle \nabla_s^\p F_t, d\nabla_s^\p F_t\rangle + d [\nabla_s^\p F_t,\nabla_s^\p F_t] \\
 &= \langle \nabla^\p_t|\nabla_s^\p F_t|^2,dW_t\rangle +\Ric_t\big(\nabla_t^\p F_t,\nabla_s^\p F_t\big)\, dt + |\nabla^\p_t\nabla_s^\p F_t|^2dt. 
\end{align}
Observing that $|\nabla_s^\p F_t|^2=0$ for $t<s$ implies the correct $\delta$-term. This proves (1a).

We continue by computing, assuming again $t>s$, that
\begin{equation}
d |\nabla_s^\p F_t|^2=2 |\nabla_s^\p F_t|\, d |\nabla_s^\p F_t|+ d[\abs{\nabla_s^\p F_t},\abs{\nabla_s^\p F_t}].
\end{equation}
Inserting the formula from (1a) and rearranging terms this implies
\begin{equation}
d |\nabla_s^\p F_t| = \langle \nabla^\p_t |\nabla_s^\p F_t| ,dW_t\rangle + \tfrac{1}{2|\nabla^\p_s F_t|}\left(|\nabla_t^\p\nabla_s^\p F_t|^2 + \Ric_t\big(\nabla^\p_t F_t,\nabla^\p_s F_t\big)\right) dt - \tfrac{1}{2|\nabla^\p_s F_t|}d[\abs{\nabla_s^\p F_t},\abs{\nabla_s^\p F_t}].
\end{equation}
Considering the coefficient in front of $dW_t$ we infer that
\begin{equation}
d[\abs{\nabla_s^\p F_t},\abs{\nabla_s^\p F_t}]=\abs{\nabla^\p_t |\nabla_s^\p F_t| }^2\, dt\, .
\end{equation}
Observing that $|\nabla_s^\p F_t|=0$ for $t<s$, one can again infer the correct $\delta$-term. Equation (1b) follows.\footnote{Note that, contrary to Proposition \ref{prop_zero_ev}, there no term $L_t$ capturing the local time spent at the origin since such a term only shows up in 1 dimension, but $\nabla_s^\p F_t$ is vector valued (we tacitly assume that $n>1$).}

Finally, using the formula
\begin{equation}
\abs{\nabla^\H F_t}^2=\int_0^\infty \abs{\nabla_s^\p F_t}^2 \, ds,
\end{equation}
equation (2a) follows from (1a). Similarly, (2b) follows from (1b).
\end{proof}
\vspace{.25cm}

As a corollary we can produce the following:

\begin{proposition}\label{cor_poincare_ev} If $F_t: P_xM\to\mathbb{R}$ is a martingale and $X_t\equiv |\nabla^\H F_t|^2 - F_t^2$ then
\begin{equation}
d X_t = \langle \nabla_t^\p X_t,dW_t\rangle + \Big(\int_0^\infty\big(|\nabla^\p_t\nabla^\p_s F_t|^2 +\Ric_t(\nabla^\p_t F_t,\nabla^\p_s F_t)\big)\, ds\Big)\,dt\, .
\end{equation}
\end{proposition}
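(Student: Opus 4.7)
The plan is to obtain this evolution equation as an immediate corollary of the two evolution formulas already proved: Theorem \ref{cor_mart_first_order}(2a) for $d|\nabla^\H F_t|^2$ and Proposition \ref{prop_zero_ev}(1) for $dF_t^2$. The only content of the proof is a subtraction and the observation that one term cancels perfectly.

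First, I would write down Theorem \ref{cor_mart_first_order}(2a) applied to the martingale $F_t$,
\begin{equation*}
d |\nabla^\H F_t|^2 = \langle \nabla^\p_t |\nabla^{\H} F_t|^2,dW_t\rangle + \Big(\int_0^\infty \big(|\nabla_t^\p\nabla^\p_s F_t|^2 + \Ric_t(\nabla^\p_s F_t,\nabla^\p_t F_t)\big)\, ds+ |\nabla^\p_t F_t|^2 \Big)\,dt,
\end{equation*}
together with the evolution $dF_t^2 = \langle \nabla_t^\p F_t^2, dW_t\rangle + |\nabla_t^\p F_t|^2\, dt$ from Proposition \ref{prop_zero_ev}(1). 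Subtracting the second from the first, I would then note two things. The martingale parts combine by linearity of $\nabla_t^\p$ into $\langle \nabla_t^\p(|\nabla^\H F_t|^2 - F_t^2), dW_t\rangle = \langle \nabla_t^\p X_t, dW_t\rangle$. The drift terms involve $|\nabla^\p_t F_t|^2 dt$ in both formulas with opposite signs, so these cancel exactly, leaving only the integral over $s$.

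This cancellation is the only nontrivial point, but it is not really an obstacle — rather, it is the reason $X_t$ is the natural quantity to consider. Indeed, it mirrors the Ito isometry (Theorem \ref{thm_ito_isom}) which asserts $E\bigl[\int_0^\infty |\nabla_t^\p F_t|^2\, dt\bigr] = E[(F-E[F])^2]$; taking expectations of the formula for $dX_t$ at $t=0$ versus $t=\infty$ recovers precisely this identity in the case $\Ric \equiv 0$. So there is really nothing hard to do; the proof is a one-line computation assembling the two prior results and invoking linearity of $\nabla_t^\p$.
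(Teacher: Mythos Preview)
Your proposal is correct and follows exactly the paper's own approach: the proof there is the single sentence ``This follows by combining part (1) of Proposition \ref{prop_zero_ev} and part (2a) of Proposition \ref{cor_mart_first_order}.'' Your added remark about the Ito isometry is a nice piece of motivation but not needed for the argument.
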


\begin{proof}
This follows by combining part (1) of Proposition \ref{prop_zero_ev} and part (2a) of Proposition \ref{cor_mart_first_order}.
\end{proof}
\vspace{.25cm}

\begin{proposition}\label{cor_logsob_ev}
If $F_t: P_xM\to\mathbb{R}$ is a nonnegative martingale and $X_t\equiv F_t^{-1}\abs{\nabla^{\H}F_t}^2-2 F_t\ln F_t$, then
\begin{equation}
d X_t = \langle \nabla_t^\p X_t,dW_t\rangle + F_t\left(\int_{0}^\infty\big|\nabla^\p_t\nabla^\p_s \ln F_t\big|^2ds\right)\, dt+F_t^{-1}\left(\int_{0}^\infty \Ric_t\big(\nabla_s^\p F_t,\nabla_t^\p F_t\big)ds\right)\, dt\, .
\end{equation}
\end{proposition}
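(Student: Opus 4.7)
\textbf{Proof plan for Proposition \ref{cor_logsob_ev}.} The natural decomposition is $X_t = A_t B_t - C_t$, where $A_t \equiv F_t^{-1}$, $B_t \equiv |\nabla^{\mathcal{H}} F_t|^2$, and $C_t \equiv 2 F_t\ln F_t$. My plan is to compute the evolutions of $A_t$, $B_t$, and $C_t$ separately, combine them via Itô's product rule with careful tracking of the cross variation, and then rearrange the result into the claimed form using a chain-rule identity for $\nabla_t^\p\nabla_s^\p \ln F_t$.

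First I would collect the three evolutions. Since $F_t$ is a martingale, Theorem \ref{t:martingale_representation} gives $dF_t = \langle \nabla_t^\p F_t,dW_t\rangle$, and Theorem \ref{t:ito_formula} applied to $\phi(x)=1/x$ and to $\phi(x)=2x\ln x$ yields
\begin{align}
dA_t &= -F_t^{-2}\langle \nabla_t^\p F_t,dW_t\rangle + F_t^{-3}|\nabla_t^\p F_t|^2\, dt,\\
dC_t &= 2(\ln F_t +1)\langle \nabla_t^\p F_t,dW_t\rangle + F_t^{-1}|\nabla_t^\p F_t|^2\, dt.
\end{align}
For $dB_t$ I would invoke Proposition \ref{cor_mart_first_order}(2a) directly. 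Itô's product rule then gives
\begin{equation}
d(A_tB_t) = A_t\, dB_t + B_t\, dA_t - F_t^{-2}\langle \nabla_t^\p F_t, \nabla_t^\p B_t\rangle\, dt,
\end{equation}
where the cross variation is read off from the $dW_t$-coefficients of $A_t$ and $B_t$.

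Next I would check that the stochastic integrand assembles into $\nabla_t^\p X_t$. By the product and chain rules for the parallel gradient,
\begin{equation}
\nabla_t^\p X_t = F_t^{-1}\nabla_t^\p B_t - F_t^{-2}B_t\,\nabla_t^\p F_t - 2(\ln F_t +1)\nabla_t^\p F_t,
\end{equation}
which matches exactly the coefficient of $dW_t$ in $d(A_tB_t) - dC_t$. Thus the martingale part is as asserted. The $F_t^{-1}|\nabla_t^\p F_t|^2\,dt$ term from $A_t\,dB_t$ cancels against the corresponding term in $dC_t$, and the Ricci contribution from $dB_t$ immediately supplies the term $F_t^{-1}\int_0^\infty \Ric_t(\nabla_s^\p F_t,\nabla_t^\p F_t)\,ds$.

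The main (and only real) obstacle is to show that the remaining drift terms
\begin{equation}
F_t^{-1}\!\!\int_0^\infty\!\!|\nabla_t^\p\nabla_s^\p F_t|^2\,ds \;+\; F_t^{-3}\,|\nabla_t^\p F_t|^2\, B_t \;-\; F_t^{-2}\langle \nabla_t^\p F_t,\nabla_t^\p B_t\rangle
\end{equation}
combine into $F_t\int_0^\infty |\nabla_t^\p\nabla_s^\p \ln F_t|^2\,ds$. For this I would use the identity $\nabla_s^\p \ln F_t = F_t^{-1}\nabla_s^\p F_t$, so that
\begin{equation}
\nabla_t^\p\nabla_s^\p \ln F_t = F_t^{-1}\nabla_t^\p\nabla_s^\p F_t - F_t^{-2}\,\nabla_t^\p F_t \otimes \nabla_s^\p F_t,
\end{equation}
which upon squaring, integrating in $s$, and multiplying by $F_t$ produces the three terms above, once one observes the Leibniz identity $\nabla_t^\p B_t = 2\int_0^\infty (\nabla_t^\p\nabla_s^\p F_t)\cdot \nabla_s^\p F_t\,ds$ that converts the cross term in the square into the inner product $\langle \nabla_t^\p F_t,\nabla_t^\p B_t\rangle$. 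Collecting all contributions yields the claimed evolution. As in previous proofs it suffices to carry out the calculation on cylinder functions and then pass to the limit by the density argument of Section~2.4.
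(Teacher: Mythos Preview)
Your proof is correct and follows essentially the same route as the paper: compute $dF_t^{-1}$ and $d(F_t\ln F_t)$ via the Ito formula, use Proposition~\ref{cor_mart_first_order}(2a) for $d|\nabla^{\H}F_t|^2$, combine via the Ito product rule, and then regroup the three drift terms into $F_t\int_0^\infty|\nabla_t^\p\nabla_s^\p\ln F_t|^2\,ds$ using exactly the chain-rule expansion of $\nabla_t^\p\nabla_s^\p\ln F_t$ that you wrote down. The only difference is cosmetic---the paper writes the cross-variation term as $\langle\nabla_t^\p F_t^{-1},\nabla_t^\p|\nabla^{\H}F_t|^2\rangle\,dt$ rather than expanding $\nabla_t^\p F_t^{-1}$ first.
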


\begin{proof}
Using the Ito formula (Theorem \ref{t:ito_formula}) we start by computing
\begin{equation}\label{comb_1st}
d (F_t\ln F_t) = \langle\nabla^\p_t (F_t\ln F_t),dW_t\rangle + \tfrac{1}{2}F_t^{-1}\abs{\nabla^\p_t F_t}^2 dt,
\end{equation}
and
\begin{equation}\label{eq_finv}
d F_t^{-1} = \langle\nabla^\p_t F_t^{-1},dW_t\rangle + F_t^{-3}\abs{\nabla^\p_t  F_t}^2 dt.
\end{equation}
Next, using again the Ito formula, part (2a) of Proposition \ref{cor_mart_first_order}, and equation \eqref{eq_finv}, we compute
\begin{align}\label{eq_logsobproof1}
d (F_t^{-1}\abs{\nabla^{\H}F_t}^2) &=  F_t^{-1}  d \abs{\nabla^{\H}F_t}^2 + \abs{\nabla^{\H}F_t}^2 dF_t^{-1} + d[F_t^{-1},\abs{\nabla^{\H}F_t}^2]\nonumber\\
& = F_t^{-1}\langle \nabla_t^\p \abs{\nabla^\H F_t}^2,dW_t\rangle + F_t^{-1}\left( \int_0^\infty (\abs{\nabla_s^\p\nabla_t^\p F_t}^2+\Ric_t(\nabla_s^\p F_t, \nabla_t^\p F_t))\, ds+\abs{\nabla_t^\p F_t}^2\right)\, dt\nonumber\\
&\quad  + F_t^{-1}\langle \nabla_t^\p \abs{\nabla^\H F_t}^2,dW_t\rangle + F_t^{-3}  \abs{\nabla_t^\p F_t}^2 \abs{\nabla^\H F_t}^2 \, dt + \langle \nabla_t^\p F_t^{-1},\nabla_t^\p \abs{\nabla^\H F_t}^2\rangle\, dt.
\end{align}
The terms in this expression can be grouped together nicely, namely we have that
\begin{equation}\label{eq_logsobproof2}
F_t^{-1}\langle \nabla_t^\p \abs{\nabla^\H F_t}^2,dW_t\rangle + F_t^{-1}\langle \nabla_t^\p \abs{\nabla^\H F_t}^2,dW_t\rangle 
= \langle \nabla_t^\p ( F_t^{-1}\abs{\nabla^\H F_t}^2),dW_t\rangle \, ,
\end{equation}
and
\begin{align}\label{eq_logsobproof3}
&\!\!\! F_t^{-1}\left( \int_0^\infty \abs{\nabla_s^\p\nabla_t^\p F_t}^2\, ds\right)\, dt + F_t^{-3}  \abs{\nabla_t^\p F_t}^2 \abs{\nabla^\H F_t}^2\, dt + \langle \nabla_t^\p F_t^{-1},\nabla_t^\p \abs{\nabla^\H F_t}^2\rangle\, dt \nonumber\\
&\qquad\qquad = F_t^{-1}\left( \int_0^\infty \left( \abs{\nabla_s^\p\nabla_t^\p F_t}^2 + F_t^{-2}\abs{\nabla_t^\p F_t}^2 \abs{\nabla_s^\p F_t}^2 - 2F_t^{-1}\langle \nabla_s^\p\nabla_t^\p F_t,\nabla_s^\p F_t \otimes \nabla_t^\p F_t\rangle\right) \, ds\right)\, dt \nonumber\\
&\qquad\qquad = F_t \left( \int_0^\infty \abs{\nabla_s^\p\nabla_t^\p \ln F_t}^2\, ds\right)\, dt\, .
\end{align}
Putting together the equations \eqref{eq_logsobproof1}, \eqref{eq_logsobproof2} and \eqref{eq_logsobproof3}, we infer that
\begin{multline}\label{comb_2nd}
d (F_t^{-1}\abs{\nabla^{\H}F_t}^2) = \langle \nabla_t^\p ( F_t^{-1}\abs{\nabla^\H F_t}^2),dW_t\rangle + F_t \left( \int_0^\infty \abs{\nabla_s^\p\nabla_t^\p \ln F_t}^2\, ds\right)\, dt\\
+F_t^{-1}\left(\int_{0}^\infty \Ric_t\big(\nabla_s^\p F_t,\nabla_t^\p F_t\big)ds\right)\, dt + \abs{\nabla_t^\p F_t}^2 dt.
\end{multline}
Combining equation \eqref{comb_2nd} with equation \eqref{comb_1st} this proves the assertion.
\end{proof}
\vspace{.5cm}

\section{Characterizations of bounded Ricci and Hessian estimates}\label{sec_app1}

Using the formalism developed, we will now prove the main estimates and results of the paper.  The main theorems (Theorem \ref{thm_new_char}, Theorem \ref{thm_improved_grad_ricci}, Theorem \ref{thm_hess_est} and Theorem \ref{thm_char_ricci}) will be proved in tandem, in a manner designed to make the logical ordering as quick as possible. As a spin off of proving all the new estimates and characterizations, we will also see how to reproduce the estimates of \cite{Naber_char} through a vastly simplified procedure.

\subsection{Proof of $|\Ric|\leq \kappa\implies (C1)\implies (C2)\implies (C3)$}

Using the evolution equation for $|\nabla_s^\p F_t|^2$ in Theorem \ref{cor_mart_first_order} we see that the Ricci curvature bound $\abs{\Ric}\leq \kappa$ implies the claimed estimate $(C1)$:
\begin{equation}\label{ev_inequ_b2}
 d |\nabla_s^\p F_t|^2 \geq \langle \nabla_t^\p|\nabla_s^\p F_t|^2,dW_t\rangle + \abs{\nabla_s^\p\nabla_t^\p F_t}^2\, dt  -\kappa \abs{\nabla^\p_s F_t}\abs{\nabla^\p_t F_t}\, dt + \abs{\nabla_s^\p F_s}^2 \delta_s(t)dt\, .
\end{equation}

Now since $\nabla_s^\p\nabla_t^\p F_t\in T_x M\otimes T_xM$ is symmetric we have the pointwise inequality $|\nabla_s^\p\nabla_t^\p F_t|^2\geq \tfrac{1}{n}|\Delta_{s,t}^\p F|^2$, which immediately yields $(C2)$:
\begin{equation}\label{ev_inequ_b3}
 d |\nabla_s^\p F_t|^2 \geq \langle \nabla_t^\p|\nabla_s^\p F_t|^2,dW_t\rangle + \tfrac{1}{n}\abs{\Delta_{s,t}^\p F_t}^2\, dt  -\kappa \abs{\nabla^\p_s F_t}\abs{\nabla^\p_t F_t}\, dt + \abs{\nabla_s^\p F_s}^2 \delta_s(t)dt\, .
\end{equation}
Finally, dropping the nonnegative term $\tfrac{1}{n}\abs{\Delta_{s,t}^\p F_t}^2$ the dimensional Bochner inequality (C2) of course implies the weak Bochner inequality (C3).

\subsection{Proof of $(C1)\implies (C4)\Longleftrightarrow (C5)$}

Assume $t>s$. We start by expressing the left hand side of (C1) as
\begin{equation}
 d |\nabla_s^\p F_t|^2 = 2|\nabla_s^\p F_t| \,d |\nabla_s^\p F_t|+ d[ |\nabla_s^\p F_t|, |\nabla_s^\p F_t|]\, .
\end{equation}
Note that for the quadratic variation term we have
\begin{equation}
 d[ |\nabla_s^\p F_t|, |\nabla_s^\p F_t|]=\abs{\nabla_t^\p \abs{\nabla_s^\p F_t}}^2 \, dt\leq \abs{\nabla_t^\p \nabla_s^\p F_t}^2 \, dt\, .
\end{equation}
Combining these facts, we see that (C1) implies
\begin{equation}
 d |\nabla_s^\p F_t| \geq \langle \nabla_t^\p|\nabla_s^\p F_t|,dW_t\rangle -\frac{\kappa}{2} \abs{\nabla^\p_t F_t}\, dt \, .
\end{equation}
Together with the fact that $\nabla_s^\p F_t=0$ for $t<s$ this yields the linear Bochner formula $(C4)$:
\begin{equation}
 d |\nabla_s^\p F_t| \geq \langle \nabla_t^\p|\nabla_s^\p F_t|,dW_t\rangle -\frac{\kappa}{2} \abs{\nabla^\p_t F_t}\, dt + \abs{\nabla_s^\p F_s}\, \delta_s(t)dt\, .
\end{equation}

In order to conclude $(C5)$ observe that we can write (C4) in the form
\begin{equation}
 d\left( |\nabla_s^\p F_t| +\frac{\kappa}{2}\int_s^t \abs{\nabla^\p_r F_r}\, dr\right) \geq \left\langle \nabla_t^\p\left( |\nabla_s^\p F_t| +\frac{\kappa}{2}\int_s^t \abs{\nabla^\p_s F_s}\, ds\right),dW_t\right\rangle  + |\nabla_s^\p F_s|\,\delta_s(t)dt\, .
\end{equation}
Thus, using the representation theorem for submartingales (Corollary \ref{t:submartingale_representation}) and the fact that $\nabla_s^\p F_t=0$ for $t<s$, we see that (C4) and (C5) are equivalent.

\subsection{Proof of $(C5)\implies (G1)$}

Using (C5) and the defining property of submartingales we obtain
\begin{equation}\label{e:req_plugin}
|\nabla_s^\p F_t| \leq  E_t\left[\abs{\nabla_s^\p F}\right]+ E_t\left[ \frac{\kappa}{2}  \int_t^\infty \abs{\nabla^\p_{r} F_{r}}\, dr\right]\, .
\end{equation}
We may estimate $|\nabla^\p_r F_r|$ by applying the above with $s=t=r$ to infer
\begin{equation}
|\nabla_s^\p F_t| \leq  E_t\left[\abs{\nabla_s^\p F}\right]
+ E_t\left[ \frac{\kappa}{2} \int_t^\infty \abs{\nabla^\p_{t_1} F} \, dt_1  + \left(\frac{\kappa}{2}\right)^2  \int_t^\infty \int_{t_1}^\infty  \abs{\nabla^\p_{t_2} F_{t_2}} \, dt_2dt_1\right]\, ,
\end{equation}
where we have used that $E_t\left[E_{t_1}[\cdot]\right]=E_t[\cdot]$ if $t\leq t_1$.  Plugging in equation \eqref{e:req_plugin} recursively we arrive at
\begin{equation}
|\nabla_s^\p F_t| \leq  E_t\left[\abs{\nabla_s^\p F} + \sum_{j=1}^\infty \left(\frac{\kappa}{2}\right)^j \int_t^\infty \cdots \int_{t_{j-1}}^\infty \abs{\nabla^\p_{t_j} F}\, dt_j\ldots dt_1\right]\, .
\end{equation}
Noticing that the volume of the simplex $\{(t_1,\ldots,t_{j-1})\,  |\, t\leq t_1\leq \ldots \leq t_{j-1}\leq t_j\}$ equals $(t_j-t)^{j-1}/(j-1)!$ we get
\begin{align}
\sum_{j=1}^\infty\left(\frac{\kappa}{2}\right)^j \int_t^\infty \cdots \int_{t_{j-1}}^\infty \abs{\nabla^\p_{t_j} F}\, dt_j\ldots dt_1  &= \sum_{j=1}^\infty  \left(\frac{\kappa}{2}\right)^j  \int_t^\infty \frac{(t_j-t)^{j-1}}{(j-1)!}\abs{\nabla^\p_{t_j} F}\, dt_j\nonumber\\
& = \frac{\kappa}{2} \int_t^\infty e^{\tfrac{\kappa}{2}(r-t)}\abs{\nabla^\p_r F}\, dr\, .
\end{align}
Putting things together, this proves the gradient estimate $(G1)$:
\begin{equation}
\abs{\nabla_s^\p F_t} \leq E_t \left[ \abs{\nabla_s^\p F}+\frac{\kappa}{2} \int_t^\infty e^{\tfrac{\kappa}{2}(r-t)} \abs{\nabla_r^\p F}\, dr\right]\, .
\end{equation}

\subsection{Proof of $(G1)\implies (G2)$}

Let $F$ be $\Sigma_T$-measurable. Using the gradient estimate (G1) and H\"older's inequality we have
\begin{align}
\abs{\nabla_s^\p F_t}^2\leq E_t \left[ \left(\abs{\nabla_s^\p F}+ \int_t^T \frac{\kappa}{2}e^{\tfrac{\kappa}{2}(r-t)}\abs{\nabla_r^\p F}\, dr\right)^2 \right]\, .
\end{align}
Together with the inequality $(a+b)^2\leq \gamma a^2 + \tfrac{\gamma}{\gamma-1}b^2$ this implies
\begin{align}
\abs{\nabla_s^\p F_t}^2\leq  E_t \left[ e^{\tfrac{\kappa}{2}(T-t)}\abs{\nabla_s^\p F}^2+\frac{e^{\tfrac{\kappa}{2} (T-t)}}{e^{\tfrac{\kappa}{2} (T-t)}-1} \left(\int_t^T \frac{\kappa}{2}e^{\tfrac{\kappa}{2}(r-t)}\abs{\nabla_r^\p F}\, dr\right)^2 \right]\, .
\end{align}
Taking into account H\"older's inequality, which yields
\begin{equation}
\left(\int_t^T \frac{\kappa}{2}e^{\tfrac{\kappa}{2}(r-t)} \abs{\nabla_r^\p F}\, dr\right)^2\leq \left(e^{\frac{\kappa}{2} (T-t)}-1\right) \int_t^T \frac{\kappa}{2}e^{\tfrac{\kappa}{2}(r-t)} \abs{\nabla_r^\p F}^2\, dr\, ,
\end{equation}
we obtain the quadratic gradient estimate (G2):
\begin{equation}
\abs{\nabla_s^\p F_t}^2 \leq e^{\tfrac{\kappa}{2}(T-t)} E_t \left[ \abs{\nabla_s^\p F}^2+\frac{\kappa}{2} \int_t^T e^{\tfrac{\kappa}{2}(r-t)} \abs{\nabla_r^\p F}^2\, dr \right]\, .
\end{equation}

\subsection{Proof of $(H1)$}

Suppose $\abs{\Ric}\leq \kappa$ and let $F$ be $\Sigma_T$-measurable. Integrating $(C1)$ from $0$ to $T$ and taking the expectation value we obtain
	\begin{equation}\label{der_first_hess}
E\left[ \abs{\nabla_s^\p F_s}^2 \right]+ E\left[ \int_0^T \abs{\nabla_t^\p\nabla_s^\p F_t}^2 \, dt \right]   \leq E\left[ \abs{\nabla_s^\p F}^2 \right]+ \kappa E\left[ \int_0^T \abs{\nabla_s^\p F_t}\abs{\nabla_t^\p F_t} \, dt \right] \, .
\end{equation}

To proceed we need to estimate the last term.  The following claim provides the correct estimate:\\

{\bf Claim 1: } We have $\kappa E\left[ \int_0^{T} \abs{\nabla_s^\p F_t} \abs{\nabla_t^\p F_t}\, dt\right]\leq E\left[ \left(e^{\tfrac{\kappa}{2}(T-s)}-1\right)\abs{\nabla_s^\p F}^2+\frac{\kappa}{2} e^{\frac{\kappa}{2}T}\int_s^{T} e^{\tfrac{\kappa}{2}(r-2s)} \abs{\nabla_r^\p F}^2\, dr \right]\, .$\\

To prove Claim 1 we start by observing that
\begin{equation}\label{eq_comb1}
\kappa E\left[ \int_0^{T} \abs{\nabla_s^\p F_t} \abs{\nabla_t^\p F_t}\, dt\right]=
\kappa E\left[ \int_s^{T} \abs{\nabla_s^\p F_t} \abs{\nabla_t^\p F_t}\, dt\right]\leq 
\frac{\kappa}{2} E\left[ \int_s^{T} \left( \abs{\nabla_s^\p F_t}^2+ \abs{\nabla_t^\p F_t}^2\right)\, dt\right]\, .
\end{equation}
Using the gradient estimate (G2) we get
\begin{align}\label{eq_comb2}
\frac{\kappa}{2} E\left[ \int_s^{T} \abs{\nabla_s^\p F_t}^2\, dt\right]&\leq
\frac{\kappa}{2} E \left[\int_s^{T} e^{\tfrac{\kappa}{2}(T-t)}\left(\abs{\nabla_s^\p F}^2  +  \frac{\kappa}{2} \int_t^T e^{\tfrac{\kappa}{2}(r-t)}\abs{\nabla^\p_rF}^2\, dr \right)\, dt \right]\nonumber\\
&= 
\left(e^{\tfrac{\kappa}{2}(T-s)}-1\right) E \left[\abs{\nabla_s^\p F}^2  \right]+\left(\frac{\kappa}{2}\right)^2 e^{\frac{\kappa}{2}T}
E \left[\int_s^{T}\!\!\! \int_t^T e^{\tfrac{\kappa}{2}(r-2t)}\abs{\nabla^\p_rF}^2\, dr \, dt \right]\, .
\end{align}
Proceeding similarly, we get get the estimate
\begin{equation}\label{eq_comb3}
\frac{\kappa}{2} E\left[ \int_s^{T} \abs{\nabla_t^\p F_t}^2\, dt\right]\leq 
\frac{\kappa}{2} E \left[ \int_s^{T} e^{\tfrac{\kappa}{2}(T-r)}\abs{\nabla_r^\p F}^2 \, dr \right]
+\left(\frac{\kappa}{2}\right)^2 e^{\frac{\kappa}{2}T}
E \left[\int_s^{T}\!\!\! \int_t^T e^{\tfrac{\kappa}{2}(r-2t)}\abs{\nabla^\p_rF}^2\, dr \, dt \right]\, .
\end{equation}
Changing the order of integration we compute
\begin{align}\label{eq_comb4}
\kappa\int_s^{T}\!\!\! \int_t^T e^{\tfrac{\kappa}{2}(r-2t)}\abs{\nabla^\p_rF}^2\, dr \, dt 
=\kappa\int_s^{T}\!\!\! \int_s^r e^{\tfrac{\kappa}{2}(r-2t)}\abs{\nabla^\p_rF}^2\, dt \, dr  
=\int_s^{T}\left( e^{\tfrac{\kappa}{2}(r-2s)}-e^{-\tfrac{\kappa}{2}r} \right) \abs{\nabla^\p_rF}^2 \, dr \, .
\end{align}
Combining \eqref{eq_comb1}, \eqref{eq_comb2}, \eqref{eq_comb3} and \eqref{eq_comb4} the claim follows. $\square$ \\

Now if we plug in the estimate of Claim 1 into \eqref{der_first_hess} we immediately conclude $(H1)$.

\subsection{Proof of $(H1)\implies (H2)$}

To prove $(H2)$ we integrate $(H1)$ for $0\leq s\leq T$ and use that $E\left[(F-E[F])^2\right]=E\left[\int_0^T |\nabla_s^\p F_s|^2\, ds\right]$ by the Ito isometry (see Theorem \ref{thm_ito_isom}) in order to get the estimate
\begin{multline}\label{e:h1_h2:1}
E\left[\big(F-E[F]\big)^2\right] + E\left[\int_0^T\!\!\int_0^T |\nabla_t^\p\nabla_s^\p F_t|^2\, dt\, ds\right ]\\
\leq e^{\frac{\kappa}{2}T}E\left[\int_0^T e^{-\frac{\kappa}{2}s}|\nabla_s^\p F|^2\, ds+ \frac{\kappa}{2}\int_0^T\!\!\int_s^T e^{\frac{\kappa}{2}(t-2s)}|\nabla_t^\p F|^2\, dt\, ds\right]\, .	
\end{multline}

Switching the order of integration in the last term gives
\begin{align}\label{e:h1_h2:2}
\kappa \int_0^T\!\!\!\int_s^T e^{\frac{\kappa}{2}(t-2s)}|\nabla_t^\p F|^2dt\,ds = 
\kappa\int_0^T\!\!\!\int_0^t e^{\frac{\kappa}{2}(t-2s)}|\nabla_t^\p F|^2ds\,dt
= \int_0^T \big(e^{\frac{\kappa}{2}t}-e^{-\frac{\kappa}{2}t}\big)|\nabla_t^\p F|^2\, dt\, .
\end{align}

Combining this with \eqref{e:h1_h2:1} proves the Poincare Hessian estimate $(H2)$.

\subsection{Proof of $(H3)$}

To prove $(H3)$ we could proceed as in the proof of $(H2)$ by first finding an evolution equation for $F_t^{-1}|\nabla_s F_t|^2$ and proceeding in a manner analogous to $(H1)\implies (H2)$.  Instead, in an attempt to illustrate another method with the Bochner techniques, we will rely on Proposition \ref{cor_logsob_ev}, which provides an evolution equation involving the full $H^1$-gradient of $F_t$.  Thus let us consider $G\equiv F^2$ and apply Proposition \ref{cor_logsob_ev} to get the evolution inequality
\begin{align}
&\!\!\! d (G_t^{-1}\abs{\nabla^{\H}G_t}^2-2 G_t\log G_t) \\
&\geq \langle \nabla_t^\p (G_t^{-1}\abs{\nabla^{\H}G_t}^2-2 G_t\log G_t),dW_t\rangle +
G_t\left(\int_{0}^T\big|\nabla^\p_t\nabla^\p_s \log G_t\big|^2ds\right)\, dt-\kappa G_t^{-1}\left(\int_{0}^T \abs{\nabla_s^\p G_t} \, ds\right)\,  \abs{\nabla_t^\p G_t}\, dt \, .\nonumber
\end{align}
Integrating this from $0$ to $T$ and taking the expectation value, we obtain
\begin{multline}\label{eq_to_der_log_sob}
E \left[ G\log G\right] - E\left[G\right] \log E\left[G\right] +\frac{1}{2}E\left[\int_0^T\!\!\!\int_{0}^T G_t\big|\nabla^\p_t\nabla^\p_s \log G_t\big|^2ds\, dt\right] \\
\leq 
2E\left[ \abs{\nabla^\H F}^2+\frac{\kappa}{4}\int_0^T\!\!\!\int_0^T G_t^{-1} \abs{\nabla_s^\p G_t}\abs{\nabla_t^\p G_t}\, ds\, dt\right] \, .
\end{multline}

To proceed we need the following error estimate:\\

{\bf Claim 2: } We have $\frac{\kappa}{4}E\left[\int_0^T\!\int_0^T G_t^{-1}\abs{\nabla_s^\p G_t}\abs{\nabla_t^\p G_t}\, ds\, dt \right]\leq
E\left[ \int_0^T \left( e^{\tfrac{\kappa}{2}T} \cosh\left(\frac{\kappa}{2}s\right)-1\right) \abs{\nabla_s^\p F}^2 \, ds \right]$ .\\

To prove Claim 2 we start by observing
\begin{equation}\label{log_eq_comb1}
 E\left[ \int_0^{T} G_t^{-1}\abs{\nabla_s^\p G_t} \abs{\nabla_t^\p G_t}\, dt\right]=
 E\left[ \int_s^{T} G_t^{-1}\abs{\nabla_s^\p G_t} \abs{\nabla_t^\p G_t}\, dt\right]\leq 
\frac{1}{2} E\left[ \int_s^{T} G_t^{-1} \left( \abs{\nabla_s^\p G_t}^2+ \abs{\nabla_t^\p G_t}^2\right)\, dt\right]\, .
\end{equation}
Using the gradient estimate (G1) and the Cauchy-Schwarz inequality as in the proof of (G2) we get
\begin{align}
\abs{\nabla_s^\p G_t}^2 &\leq E_t\left [ 2F \left( \abs{\nabla_s^\p F}+\frac{\kappa}{2}\int_t^T e^{\tfrac{\kappa}{2}(r-t)}\abs{\nabla_r^\p F}\, dr\right) \right]^2\nonumber\\
&\leq 4 G_t \, e^{\tfrac{\kappa}{2}(T-t)}
E_t\left [  \abs{\nabla_s^\p F}^2+\frac{\kappa}{2}\int_t^T e^{\tfrac{\kappa}{2}(r-t)}\abs{\nabla_r^\p F}^2\, dr \right]\, .
\end{align}
This implies
\begin{align}\label{log_eq_comb2}
\frac{\kappa}{8} E\left[ \int_s^{T} G_t^{-1}\abs{\nabla_s^\p G_t}^2\, dt\right]\leq 
\left(e^{\tfrac{\kappa}{2}(T-s)}-1\right) E \left[\abs{\nabla_s^\p F}^2  \right]+\left(\frac{\kappa}{2}\right)^2 e^{\frac{\kappa}{2}T}
E \left[\int_s^{T}\!\!\! \int_t^T e^{\tfrac{\kappa}{2}(r-2t)}\abs{\nabla^\p_rF}^2\, dr \, dt \right]\, .
\end{align}
Proceeding similarly, we get get the estimate
\begin{align}\label{log_eq_comb3}
\frac{\kappa}{8} E\left[ \int_s^{T} G_t^{-1}\abs{\nabla_t^\p G_t}^2\, dt\right]\leq 
\frac{\kappa}{2} E \left[\int_s^T e^{\tfrac{\kappa}{2}(T-r)}\abs{\nabla_r^\p F}^2 \, dr \right]+\left(\frac{\kappa}{2}\right)^2 e^{\frac{\kappa}{2}T}
E \left[\int_s^{T}\!\!\! \int_t^T e^{\tfrac{\kappa}{2}(r-2t)}\abs{\nabla^\p_rF}^2\, dr \, dt \right]\, .
\end{align}
Combining \eqref{log_eq_comb1}, \eqref{log_eq_comb2}, \eqref{log_eq_comb3} and \eqref{eq_comb4} we obtain the error estimate
\begin{equation}\label{log_first_error_est}
\frac{\kappa}{4}E\left[ \int_0^{T} G_t^{-1}\abs{\nabla_s^\p G_t} \abs{\nabla_t^\p G_t}\, dt\right]\leq
\left(e^{\tfrac{\kappa}{2}(T-s)}-1\right) E\left[ \abs{\nabla_s^\p F}^2\right]
+\frac{\kappa}{2}e^{\tfrac{\kappa}{2}T} E\left[ \int_s^T e^{\tfrac{\kappa}{2}(r-2s)}\abs{\nabla_r^\p F}^2 \, dr\right]\, .
\end{equation}
Integrating \eqref{log_first_error_est} over $s$ from $0$ to $T$, and computing the double integral as in \eqref{e:h1_h2:2}, the claim follows. $\square$\\

Now combining \eqref{eq_to_der_log_sob} and Claim 2 we conclude that
\begin{multline}
E\left[ F^2\ln F^2 \right] -E[F^2] \log E[ F^2 ]
+ \frac12 E\left[ \int_0^T\!\!\!\int_0^T(F^2)_t |\nabla_t^\p\nabla_s^\p \ln(F^2)_t|^2 \, ds\, dt \right]\\
\leq 2e^{\tfrac{\kappa}{2}T} E\left[\int_0^T\cosh(\tfrac{\kappa}{2}s)\abs{\nabla_s^\p F}^2 \, d s \right] \, .
\end{multline}
This proves the log-Sobolev Hessian estimate (H3), and thus finishes the proof of Theorem \ref{thm_hess_est}.

\subsection{Proof of $(R2)-(R7)$}

We briefly remark how the estimates (R2) -- (R7), which are the estimates from \cite{Naber_char}, easily follow from our new estimates.  Indeed, $(R2)$ and $(R3)$ follow by evaluating $(G1)$ and $(G2)$, respectively, by evaluating at $s=t=0$.  The estimates $(R4)$ and $(R5)$ similarly follow from $(G1)$ and $(G2)$ by setting $s=t$, integrating both sides over $PM$, and recalling the equality $\frac{d[F,F]_t}{dt} = |\nabla_t^\p F_t|^2$  from Corollary \ref{cor_quadr_var}.

The estimate $(R6)$ is essentially a weaker form of $(H2)$ obtained by dropping the Hessian term.  Precisely, as in the proof of $(H2)$ if we integrate $(H1)$ for $t_0\leq s\leq t_1$ and drop the Hessian term then we arrive at the inequality
\begin{align}
E\left[|F_{t_1}-F_{t_0}|^2\right] \leq e^{\frac{\kappa}{2}T}E\left[\int_{t_0}^{t_1}e^{-\frac{\kappa}{2}s}|\nabla_s^\p F|^2\, ds+\frac{\kappa}{2}\int_{t_0}^{t_1}\int_{s}^Te^{\frac{\kappa}{2}(t-2s)}|\nabla_t^\p F|^2 \, dt \, ds \right]	\, .
\end{align}
Changing the order of integration for the second term and proceeding as in \eqref{e:h1_h2:2} finishes the proof of $(R6)$.  As with $(R6)$ the estimate $(R7)$ is essentially a weaker version of $(H3)$ obtained by dropping the Hessian term. The proof follows in verbatim the manner of $(H3)$, however we integrate Proposition \ref{cor_logsob_ev} from $t_0\leq t\leq t_1$, instead of over the whole interval $0\leq t\leq T$.

\subsection{Proof of Converse Implications}\label{ss:converse}

In order to finish the proof of the Theorem \ref{thm_new_char}, Theorem \ref{thm_improved_grad_ricci} and Theorem \ref{thm_char_ricci} we need to see the converse implications, namely that the desired estimates themselves imply the bounds on Ricci curvature.  We will split this into two parts, namely the proof of the lower bound and the proof of the upper bounds.  The verbatim test functions we will introduce may used to prove any of the converse implications, and so we will focus in this subsection on $(C3)\implies |\Ric|\leq \kappa$, which is to say we will see that the weak Bochner inequality implies the two sided Ricci curvature bound.\\

\noindent{\bf (C3) implies Lower Ricci.}  We saw in the introduction how the martingale Bochner inequality may be used to imply the classical Bochner inequality, and therefore the lower Ricci bound.  Regardless, it is instructive for us to prove directly the lower bound, as a slightly more involved version of the same technique will be used to prove the upper bound.  Thus for $x\in M$ and $v\in T_xM$ a unit vector let us choose a smooth compactly supported function $f_1:M\to \dR$ such that
\begin{align}\label{e:converse:0}
f_1(x)=0\, , \;\;\;\; \nabla f_1(x) = v\, ,\;\;\;\; \nabla^2 f_1(x)=0\, .	
\end{align}
Note one can build such a function by using exponential coordinates.  If we consider the function on path space given by $F_\epsilon(\gamma) = f_1(\gamma(\epsilon))$, then let us observe for $s\leq t\leq \epsilon$ the computations
\begin{align}\label{e:converse:1}
\nabla_t^\p F_t = P_{t}\nabla H_{\epsilon-t}f_1(\gamma(t))\, ,\;\;\;\; |\nabla_t^\p\nabla_s^\p F_t| = |\nabla^2 H_{\epsilon-t}f_1|(\gamma(t))\, .
\end{align}
Note in particular that $\nabla_t^\p F_t\approx v$ and $|\nabla_t^\p\nabla_s^\p F_t|\approx 0$ for $\epsilon$ small, at least for a typical curve (one can be very effective about this estimate, but it is not necessary for our purpose).  Now using the generalized Bochner formula of Theorem \ref{thm_bochner_pathspace} we have that $t\mapsto |\nabla_0^\p F_t|^2-\int_0^t \big(|\nabla_r^\p\nabla_s^\p F_r|^2 + \Ric(\nabla_0^\p F_r,\nabla_r^\p F_r)\, dr\big)$ is a martingale.  In particular we have
\begin{align}\label{e:converse:eq:1}
|\nabla^\p_0 F_0|^2=E\left[|\nabla_0^\p F_\epsilon|^2-\int_0^\epsilon \big(|\nabla_r^\p\nabla_s^\p F_r|^2 + \Ric(\nabla_0^\p F_r,\nabla_r^\p F_r)\, dr\big)\right]\, .
\end{align}
Now by using \eqref{e:converse:1} we get
\begin{align}\label{e:converse:2}
|\nabla^\p_0 F_0|^2=E\left[|\nabla_0^\p F_\epsilon|^2\right]-\epsilon Rc(v,v)+o(\epsilon)\, .
\end{align}

On the other hand, by $(C3)$ we have that $t\mapsto |\nabla_0^\p F_t|^2+\kappa\int_0^t |\nabla_0^\p F_r| |\nabla_r^\p F_r|\, dr$ is a submartingale, so that
\begin{align}\label{e:converse:eq:2}
|\nabla^\p_0 F_0|^2\leq E\left[|\nabla_0^\p F_\epsilon|^2+\kappa\int_0^\epsilon |\nabla_0^\p F_r| |\nabla_r^\p F_r|\, dr\right]\, ,
\end{align}
which by using \eqref{e:converse:1} again gives us
\begin{align}\label{e:converse:3}
|\nabla^\p_0 F_0|^2\leq E\left[|\nabla_0^\p F_\epsilon|^2\right]+\kappa\,\epsilon+o(\epsilon)\, .
\end{align}

Combining \eqref{e:converse:2} and \eqref{e:converse:3} we infer that
\begin{align}
\Ric(v,v)\geq -\kappa -\epsilon^{-1}o(\epsilon)\, ,	
\end{align}
which by limiting $\epsilon\to 0$ gives us our desired lower bound.\\

\noindent{\bf (C3) implies Upper Ricci.}  We have seen that cylinder functions of one variable capture the lower Ricci curvature bound, and therefore we necessarily need more complicated functions on path space to capture the upper Ricci curvature bound.  In fact, we will see that a cylinder function of two variables is enough.  For $x\in M$ and $v\in T_xM$ a unit vector let us choose a smooth compactly supported function $f_2:M\times M\to \dR$ such that
\begin{align}
f_2(x,x)=0\, , \;\;\;\; \nabla^{(1)} f_2(x,x) = 2v\, ,\;\;\;\; \nabla^{(2)} f_2(x,x) = -v\, ,\;\;\;\; \nabla^2 f_2(x,x)=0\, .	
\end{align}
For instance, we may choose $f_2(y,z)=2f_1(y)- f_1(z)$ where $f_1$ is defined in \eqref{e:converse:0}.  Let us then define the cylinder function $F_\epsilon(\gamma)\equiv f_2(\gamma(0),\gamma(\epsilon))$.  A computation tells us for $0<t\leq \epsilon$ that
\begin{align}\label{e:converse:6}
	&\nabla_0^\p F_t = \nabla^{(1)}f_2(x,\gamma(t))+P_{t}\nabla H^{(2)}_{\epsilon-t}f_2(x,\gamma(t))\, ,\;\;\;\; \nabla_t^\p F_t = P_{t}\nabla H^{(2)}_{\epsilon-t}f_2(x,\gamma(t))\, ,\notag\\
	&|\nabla_t^\p\nabla_0^\p F_t| \leq |\nabla^2 f_2|(x,\gamma(t))+|\nabla^2 H^{(2)}_{\epsilon-t}f_2|(x,\gamma(t))\, .
\end{align}

Note in particular that $\nabla_0^\p F_t\approx v$, $\nabla_t^\p F_t\approx -v$ and $|\nabla_t^\p\nabla_0^\p F_t|\approx 0$ for $\epsilon$ small, at least for a typical curve (again, one could be quite effective about this but it is unneccessary).  Note that in contrast to the test function in the lower Ricci context, we have that $\nabla_0^\p F_t$ and $\nabla_t^\p F_t$ have flipped signs.  Now using the generalized Bochner formula as in \eqref{e:converse:eq:1} we have that $t\mapsto |\nabla_0^\p F_t|^2-\int_0^t \big(|\nabla_r^\p\nabla_s^\p F_r|^2 + \Ric(\nabla_0^\p F_r,\nabla_r^\p F_r)\, dr\big)$ is a martingale and can compute
\begin{align}\label{e:converse:eq:3}
|\nabla^\p_0 F_0|^2=E\left[|\nabla_0^\p F_\epsilon|^2-\int_0^\epsilon \big(|\nabla_r^\p\nabla_s^\p F_r|^2 + \Ric(\nabla_0^\p F_r,\nabla_r^\p F_r)\big)\, dr\right]\, ,
\end{align}
which in combination with \eqref{e:converse:6} allows us to write
\begin{align}\label{e:converse:4}
|\nabla^\p_0 F_0|^2=E_0\left[|\nabla_0^\p F_\epsilon|^2\right]+\epsilon Rc(v,v)+o(\epsilon)\, .
\end{align}
We have used that $\nabla_0^\p F_t$ and $\nabla_t^\p F_t$ have opposite signs to obtain a positive sign in front of the Ricci term.  Additionally, using $(C3)$ as in \eqref{e:converse:eq:2} we arrive at \eqref{e:converse:3}:
\begin{align}\label{e:converse:5}
|\nabla^\p_0 F_0|^2\leq E_0\left[|\nabla_0^\p F_\epsilon|^2\right]+\kappa\,\epsilon+o(\epsilon)\, .
\end{align}

Combining \eqref{e:converse:4} and \eqref{e:converse:5} we infer that
\begin{align}
\Ric(v,v)\leq \kappa +\epsilon^{-1}o(\epsilon)\, ,	
\end{align}
which by limiting $\eps\to 0$ finishes the proof of the upper bound.

\vspace{1cm}

\noindent{\bf Other Converse Implications.}  Using the same test function, it is now straightforward to check that the other estimates on path space also imply the Ricci bound. To illustrate this in one more case, let us consider the gradient estimate (R3). Testing with a 1-point cylinder function we infer again that $\Ric\geq -\kappa g$. To prove the upper Ricci bound consider the test function
\begin{equation}
F_\eps(\gamma)= f_2 (\gamma(0),\gamma(\eps))
\end{equation}
as above. Expanding the gradient estimate (R3) gives
\begin{equation}\label{eq_final1}
\abs{\nabla_x E_x [F_\eps]}^2\leq \left(1+\frac{\kappa}{2}\eps\right) E\left[ \abs{\nabla_0^\p F_\eps}^2 + \frac{\kappa}{2}\eps \abs{\nabla_{\eps/2}^\p F_{\eps/2}}^2\right]+o(\eps)\leq 1+\kappa \eps + o(\eps)\, .
\end{equation}
On the other hand, as in \eqref{e:converse:4} from the generalized Bochner formula we see that
\begin{equation}\label{eq_final2}
\abs{\nabla_x E_x[F_\eps]}^2 \geq  1+ \eps\Ric(v,v) + o(\eps)\, .
\end{equation}
Combining \eqref{eq_final1} and \eqref{eq_final2} we conclude that $\Ric\leq \kappa g$. This finishes the proof of the converse implication, and thus the proof of Theorem \ref{thm_new_char}, Theorem \ref{thm_improved_grad_ricci} and Theorem \ref{thm_char_ricci}. $\square$\\

\begin{remark}
It is quite straightforward to plug in the test functions into all the estimates, but there are also several alternatives to close all the loops of implications, as we will briefly illustrate now. Applying the log-Sobolev inequality (R7) to $F^2=1+\eps G$ gives the Poincare inequality (R6).  Dividing the estimate (R6) by $\abs{t_1-t_0}$ and taking the limit $\abs{t_1-t_0}\to 0$ gives the quadratic variation estimate (R5). Moreover, using $d[F,F]_t=\abs{\nabla_t^\p F_t}^2 dt$ it is easy to see that $(R5)\Leftrightarrow (R3)$ and that $(R4)\Leftrightarrow (R2)$. And of course $(R2)\Rightarrow (R3)$ via H\"older exactly as in $(G2)\Rightarrow (G3)$. Summing up, if one doesn't want to plug a test function in any estimate other than (C3) and (R3), where we already did it, this is enough to close all the loops of equivalences.
\end{remark}

\bibliography{HN_ricci_martingales}

\bibliographystyle{alpha}

\vspace{10mm}
{\sc Robert Haslhofer, Department of Mathematics, University of Toronto, 40 St George Street, Toronto, ON M5S 2E4, Canada}\\

{\sc Aaron Naber, Department of Mathematics, Northwestern University, 2033 Sheridan Road, Evanston, IL 60208, USA}\\

\emph{E-mail:} roberth@math.toronto.edu, anaber@math.northwestern.edu

\end{document}